\newtheorem{theorem}{Theorem}[section]
\newtheorem{lemma}[theorem]{Lemma}
\newtheorem{corollary}[theorem]{Corollary}
\newtheorem{remark}[theorem]{Remark}
\newtheorem{definition}[theorem]{Definition}
\newtheorem{assumption}[theorem]{Assumption}
\numberwithin{equation}{section}
\newcommand{\meantmp}[2]{#1\langle{#2}#1\rangle}
\newcommand{\mean}[1]{\meantmp{}{#1}}
\newcommand{\Rn}{{\setR^n}}
\newcommand{\RN}{{\setR^N}}
\newcommand{\RNn}{\setR^{N\times n}}
\newcommand{\px}{{p(\cdot)}}
\newcommand{\PP}{\mathcal{P}}
\newcommand{\PPln}{\mathcal{P}^{{\log}}}
\providecommand{\bfAT}{{\bfA_{\mathcal{T}}}}
\begin{document}

\title{Finite element approximation of the $\px$-Laplacian}
%\titlerunning{Interpolation Operators}
\author{Dominic Breit, Lars Diening and  Sebastian Schwarzacher}
\address{LMU Munich, Institute of Mathematics, Theresienstr. 39,
  80333-Munich, Germany}

\begin{abstract}
  We study a~priori estimates for the $\px$-Laplace Dirichlet problem, $-\mathrm{div}(\abs{\nabla \bfv}^{p(\cdot)-2} \nabla
  \bfv) = \bff$. We show that the gradients of the finite element
  approximation with zero boundary data converges with rate
  $O(h^\alpha)$ if the exponent~$p$ is $\alpha$-H{\"o}lder continuous. The
  error of the gradients is measured in the so-called quasi-norm,
  i.e. we measure the $L^2$-error of $\abs{\nabla
    \bfv}^{\frac{p-2}{2}} \nabla \bfv$.
\end{abstract}

% \subclass{65N30, % Finite elements, Rayleigh-Ritz and Galerkin methods, finite methods 
%    65N15, %Error bounds
%  65D05, % Interpolation
%  35J60, % Nonlinear PDE of elliptic type
%  }
\maketitle

\keywords{%
  variable exponents,
  convergence analysis,
  a priori estimates,
  finite element method,
  generalized Lebesgue and Sobolev spaces

MSC 2010:  65N15, 65N30, 65D05, 35J60, 46E30}
\date{\small \today}

%\tableofcontents

\section{Introduction}
\label{sec:introduction}
\noindent
In recent years there has been an extensive interest in the field of
variable exponent spaces $L^\px$. Different from the classical
Lebesgue spaces $L^p$, the exponent is not a constant but a function
$p=p(x)$. We refer to the recent books \cite{CruFio13,DieHHR11} for a
detailed study of the variable exponent spaces, although the
definition of the spaces goes already back to Orlicz~\cite{Orl31}.

The increasing interest was motivated by the model for
electrorheological fluids~\cite{RajR96,Ruz00}. Those are smart
materials whose viscosity depends on the applied electric field.
Nowadays it is possible to change the viscosity locally by a factor
of~1000 in~1ms. This is modeled via a dependence of the viscosity on a
variable exponent. Electrorheological fluids can for example be used
in the construction of clutches and shock absorbers.

Further applications of the variable exponent spaces can be found in
the area of image reconstruction. Here, the change of the exponent is
used to model different smoothing properties according the edge
detector. This can be seen as a hybrid model of standard diffusion and
the TV-model introduced by~\cite{CheLevRao06}, see
also~\cite{BolChaEseSchVix09} and~\cite{HarHasLeNuo10} for an
overview over the related topics.

A model problem for image reconstruction as well as a starting point
for the study of electro-rheological fluids is the $\px$-Laplacian
system
\begin{align}\label{eq:1}
  -\mathrm{div}((\kappa+\abs{\nabla\bfv})^{\px-2}\nabla\bfv) =
  \bff\quad\text{in}\quad\Omega
\end{align}
on a bounded domain $\Omega$, where $\kappa\geq0$. Here
$\bff:\Omega\rightarrow\RN$ is a given function and
$\bfv:\Omega\rightarrow\RN$ is the vector field we are seeking for.
The natural function space for solutions is the generalized Sobolev
space $(W^{1,\px}(\Omega))^N$ which is the set of all functions
$\bfu\in (L^{\px}(\Omega))^N$ with distributional gradient $\nabla \bfu\in
(L^{\px}(\Omega))^{N\times n}$. Here we have
\begin{align}\label{eq:Lpx}
  L^{\px}(\Omega)=\Bigset{f\in
    L^1(\Omega):\,\,\int_{\Omega}\abs{f}^{\px}\,dx < \infty}.
\end{align}
A major breakthrough in the theory of variable exponent spaces was the
fact that the right condition on the exponent was found: the {\em
  log-H{\"o}lder continuity} (see Section~\ref{sec:prel}).  In fact for
such exponents $\px$ bounded away from $1$ the Hardy-Littlewood
maximal operator is bounded in $L^\px(\Omega)$,
see~\cite{CruFN03,Die04} and \cite[Theorem~4.3.8]{DieHHR11}. This is a
consequence of the so called {\em key estimate for variable exponent
  spaces}, which roughly reads
\begin{align}
  \label{eq:keypre}
  \Bigg(\dashint_Q |f|\, dx\Bigg)^{p(y)} \le c\,\dashint_Q
  |f|^{p(x)}\, dx + \text{error}
\end{align}
where $Q$ is a ball or cube, $y\in Q$ and the ``error'' denotes an
appropriate error term, which is essentially independent of~$f$;  see
Theorem~\ref{thm:jensenpx} for the precise statement. Once the maximal
operator is bounded many techniques known for Lebesgue spaces can be
extended to the setting of variable exponents.

Under the assumptions on $\px$ mentioned above the existence of a weak
solution to \eqref{eq:1} is quite standard provided $\bff\in
(L^{\px'}(\Omega))^N$, i.e. there is a unique function
$\bfv\in(W^{1,\px}_0(\Omega))^N$ such that
\begin{align}
  \label{eq:app_sys}
  \int_\Omega \bfA (\cdot,\nabla \bfv) \cdot\nabla\bfpsi\, dx =
  \int_\Omega \bff \cdot \bfpsi\, dx,
\end{align}
for all $\bfpsi \in (W^{1,\px}_{0}(\Omega))^N $ holds. There is a huge
bulk of literature regarding properties of solutions, see for
instance~\cite{AceM01,AceM05,DiePhd} and it seems that the problem is well
understood from the analytical point of view. 

However, there is only few literature about numerical algorithms for
discrete solutions of~\eqref{eq:app_sys}. In \cite{DelLomMar12} the
convergence of discontinuous Galerkin FEM approximations is studied.
They prove strong convergence of the gradients but without any explicit rate.
The paper~\cite{CarHaePro10} is concerned with the
corresponding generalized Navier-Stokes equations which are certainly
much more sophisticated then~\eqref{eq:1}. The convergence of the
finite element approximation is shown without convergence rate. To our
knowledge no quantitative estimate on the convergence rate for
problems with variable exponents is known. To derive such an estimate
is the main purpose of the present paper.

Let us give a brief overview about conformal finite element
approximation for more classical equations.  Let $\Omega$ be a
polyhedron decomposed in simplices with lengths bounded by $h$.  In
case of the Laplace equation $-\Delta\bfv=\bff$ one can easily show
that
\begin{align*}
 \norm{\nabla\bfv-\nabla\bfv_h}_{2}\leq \,c\, h\norm{\nabla^2\bfv}_2,
\end{align*}
where $\bfv_h$ is the finite element solution. This is based on best
approximation and interpolation results and rather classical.

Let us turn to the $p$-Laplacian, i.e. \eqref{eq:1} with constant $p$.
As was firstly observed in \cite{BarL94} the convergence of nonlinear
problems should be measured with the so-called quasi-norm. Indeed, in
\cite{EbmLiu05,DieR07} it was shown that
\begin{align}
\label{eq:perror}
 \norm{\bfF(\nabla\bfv)-\bfF(\nabla\bfv_h)}_{2}\leq \,c\, h\norm{\nabla\bfF(\nabla\bfv)}_2,
\end{align}
where $\bfF(\bfxi)=(\kappa+\abs{\bfxi})^{p-2}\bfxi$.  The result is
based on best approximation theorems and interpolations results in
Orlicz spaces. It holds also for more general nonlinear
equations~\cite{DieR07}.  Please remark, that $\bfF\in W^{1,2}$ is
classical see~\cite{Uhl77,LiuBar93rem} for smooth domains. In the case
of polyhedral domains with re-entrant corners certainly less regularity
is expected (as in the linear case),
see~\cite{Ebm01poly,EbmFre01,EbmLiuSte05}.  Let us mention, that for the
p-Laplacian results about rate optimality of the adaptive finite
element method has been shown in~\cite{DieK08,BelDieKre11} followed.

In this work finite element solutions to the $\px$-Laplacian under the
assumption that $p\in C^{0,\alpha}(\overline{\Omega})$ are studied,
see~\eqref{eq:weak_h}. To be precise we show in Theorem~\ref{thm:error}
that
 \begin{align}\label{eq:conv}
 \norm{\bfF(\cdot,\nabla\bfv)-\bfF(\cdot,\nabla\bfv_h)}_{2}\leq h^\alpha c(\norm{\bfF(\cdot,\nabla\bfv)}_{1,2}),
\end{align}
where $\bfF(\cdot,\bfxi)=(\kappa+\abs{\bfxi})^{\px-2}\bfxi$. If $p$ is
Lipschitz continuous we can recover the result for the constant
$p$-Laplacian~\eqref{eq:perror}. As usual the proof of~\eqref{eq:conv}
is mainly based on interpolation operators and a lemma of \Cea{} type.
Therefore we study in Section~\ref{sec:interpolation-wk-phi}
interpolation operators on Lebesgue spaces of variable exponents. In
order to do so we need an extension of the key estimate
\eqref{eq:keypre} which can be found in
Theorem~\ref{thm:jensenpxshift}. These purely analytic results are
improved tools in the variable Lebesgue and Sobolev theory and are
therefore of independent interest.

Finally we introduce a numerical scheme, where the exponent $\px$ is
locally approximated by constant functions, see~\eqref{eq:weak_h'}.
This seems more appropriate for practical use.  Naturally another
error term due to the approximation of the exponent occurs.  However,
we show, that this error does not harm the convergence rate
and~\eqref{eq:conv} still holds, see Theorem~\ref{thm:frozen}.

% \\
% 
% 
% 
%   
% 
% 
% 
% \begin{itemize}
% \item $px$-Laplace vorstellen
% \item analysis vorstellen
% \item key estimate (Jensen) (max op)
% \item numerisch noch nicht so viel
% \item (Kovacic, Rakosnik?)
% \item finite elements was bisher geschah
% \item unsere resultate
% \end{itemize}
% 
% 

\section{Variable exponent spaces}
\label{sec:prel}
\noindent
For a measurable set $E \subset \Rn$ let $\abs{E}$ be the Lebesgue
measure of~$E$ and $\chi_E$ its characteristic function. For $0 <
\abs{E} < \infty$ and $f \in L^1(E)$ we define the mean value of $f$
over $E$ by
\begin{align*}
   \mean{f}_E:=\dashint_{E}f \,dx:=\frac{1}{\abs{E}}\int_E f \,dx.
\end{align*}
For an open set $\Omega \subset \Rn$ let $L^0(\Omega)$ denote the set
of measurable functions.

Let us introduce the spaces of variable exponents~$L^\px$.  We use the
notation of the recent book~\cite{DieHHR11}.  We define $\PP$ to
consist of all $p \in L^0(\Rn)$ with $p\,:\, \Rn \to [1,\infty]$
(called variable exponents).  For $p \in\PP$ we define $p^-_\Omega :=
\essinf_\Omega p$ and $p^+_\Omega := \esssup_\Omega p$.  Moreover, let
$p^+ := p^+_{\Rn}$ and $p^- := p^-_{\Rn}$.

For $p \in \PP$ the generalized Lebesgue space $L^\px(\Omega)$ is
defined as
\begin{align*}
  L^\px(\Omega):=\bigset{f \in L^0(\Omega) \,:\,
    \norm{f}_{L^\px(\Omega)} < \infty},
\end{align*}
where
\begin{align*}
  \norm{f}_{\px}:=\norm{f}_{L^\px(\Omega)}:=
  \inf\Biggset{\lambda>0\,:\,\int_{\Rn} \biggabs{
      \frac{f(x)}{\lambda}}^{p(x)} \,dx \leq 1}.
\end{align*}

% The generalized Sobolev space $W^{1,\px}(\Omega)$ consists of those
% $L^1_{\loc}$-functions whose norm
% \begin{align*}
%   \norm{f}_{W^{1,\px}(\Omega)}= \norm{f}_{L^\px(\Omega)}
%   +\norm{\nabla f}_{L^\px(\Omega)};
% \end{align*}
% is finite, where $\nabla f$ is the distributional derivative of $f$.
% If $p$ is constant, then $L^\px$ and $W^{1,\px}$ coincide with the
% classical Lebesgue and Sobolev spaces.

We say that a function $\alpha\colon \Rn \to \setR$ is {\em
  $\log$-H{\"o}lder continuous} on $\Omega$ if there exists a constant $c
\geq 0$ and $\alpha_\infty
\in \setR$ such that
\begin{align*}
  % \label{ineq:log_cont}
  \abs{\alpha(x)-\alpha(y)} &\leq \frac{c}{\log
    (e+1/\abs{x-y})} &&\text{and}& \abs{\alpha(x) - \alpha_\infty}
  &\leq \frac{c}{\log(e + \abs{x})}
\end{align*}
for all $x,y\in \Rn$. The first condition describes the so called
local $\log$-H{\"o}lder continuity and the second the decay condition.
The smallest such constant~$c$ is the $\log$-H{\"o}lder constant
of~$\alpha$. We define $\PPln$ to consist of those exponents $p\in
\PP$ for which $\frac{1}{p} \,:\, \Rn \to [0,1]$ is $\log$-H{\"o}lder
continuous. By $p_\infty$ we denote the limit of~$p$ at infinity,
which exists for $p \in \PPln$.  If $p \in \PP$ is bounded, then $p
\in \PPln$ is equivalent to the $\log$-H{\"o}lder continuity of $p$.
However, working with $\frac 1p$ gives better control of the constants
especially in the context of averages and maximal functions.
Therefore, we define $c_{\log}(p)$ as the $\log$-H{\"o}lder constant
of~$1/p$. Expressed in~$p$ we have for all $x,y \in \Rn$
\begin{align*}
  \abs{p(x) - p(y)} \leq \frac{(p^+)^2 c_{\log}(p)}{\log
    (e+1/\abs{x-y})} &&\text{and }& \abs{p(x) - p_\infty} &\leq
  \frac{(p^+)^2 c_{\log}(p)}{\log(e + \abs{x})}.
\end{align*}
% For $t \geq 0$ and $q \in [1,\infty)$ we define
%   $\phi_q(t) := t^q$
\begin{lemma}
  \label{lem:pxpy} 
Let $p \in \PPln(\Rn)$ with $p^+<\infty$ and $m>0$. Then for every $Q \subset \Rn$ with $\ell(Q) \leq 1$, $\kappa\in [0,1]$ and $t\geq 0$, that holds $\abs{Q}^m \leq t \leq \abs{Q}^{-m}$
  \begin{align*}
%     \frac{\phi''_{p(x)}(t)}{\phi''_{p(y)}(t)} \approx 
(\kappa +
    t)^{p(x)-p(y)} \leq c
  \end{align*}
for all $x,y \in Q$. The constants depend on $c_{\log}(p),m$ and $p^+$.
\end{lemma}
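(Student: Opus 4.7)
\medskip

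The plan is to take logarithms and show that $(p(x)-p(y))\log(\kappa+t)$ is uniformly bounded under the hypotheses; the conclusion $(\kappa+t)^{p(x)-p(y)}\le c$ then follows by exponentiating. All bounds reduce to matching a logarithm of $|Q|$ (from the constraint on $t$) against the inverse logarithm produced by the log-H\"older condition on $p$.

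First I would estimate $|p(x)-p(y)|$. Since $x,y\in Q$ we have $|x-y|\le \sqrt{n}\,\ell(Q)\le \sqrt{n}$, so the local $\log$-H\"older continuity of $p$ (rewritten from the assumption on $1/p$, as stated immediately before the lemma) gives
\begin{align*}
  |p(x)-p(y)| \le \frac{(p^+)^2 c_{\log}(p)}{\log(e+1/|x-y|)} \le \frac{c}{\log(e+1/\ell(Q))},
\end{align*}
with $c=c(n,p^+,c_{\log}(p))$.

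Next I would bound $|\log(\kappa+t)|$ using the two-sided constraint on $t$ and $\kappa\in[0,1]$. For a cube one has $|Q|=\ell(Q)^n$. If $\kappa+t\ge 1$ then $\log(\kappa+t)\ge 0$ and $\kappa+t\le 1+|Q|^{-m}\le 2|Q|^{-m}$ (since $|Q|\le 1$), so $\log(\kappa+t)\le \log 2+mn\log(1/\ell(Q))$. If $\kappa+t\le 1$ then $-\log(\kappa+t)\ge 0$ and $\kappa+t\ge t\ge |Q|^m$, so $-\log(\kappa+t)\le mn\log(1/\ell(Q))$. Either way
\begin{align*}
  |\log(\kappa+t)|\le c\,\bigl(1+\log(1/\ell(Q))\bigr)\le c'\log(e+1/\ell(Q))
\end{align*}
for a constant depending on $m$ and $n$.

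Multiplying the two estimates, the factors $\log(e+1/\ell(Q))$ and $1/\log(e+1/\ell(Q))$ cancel, yielding $|(p(x)-p(y))\log(\kappa+t)|\le C$ with $C=C(c_{\log}(p),m,p^+)$ (absorbing $n$ into the constants). Exponentiating gives $(\kappa+t)^{p(x)-p(y)}\le e^{C}$, which is the desired bound. There is no serious obstacle here; the only subtlety is keeping track of both the large-$t$ and small-$t$ ($t$ near $|Q|^m$ with $\kappa$ small) regimes so that the upper bound on $|\log(\kappa+t)|$ is genuinely one-sided logarithmic in $1/\ell(Q)$ in both cases, which is exactly what the hypothesis $\kappa\le 1$ and the lower bound $t\ge |Q|^m$ provide.
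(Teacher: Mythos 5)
Your proof is correct and is essentially the paper's own argument: the paper likewise reduces to the two-sided bound $\abs{Q}^m \le \kappa+t \le 2\abs{Q}^{-m}$ (using $\kappa\le 1$) and then cancels $\log(1/\abs{Q})$ against the $1/\log(e+1/\abs{x-y})$ from the log-H{\"o}lder condition, writing the conclusion compactly as $(\kappa+t)^{p(x)-p(y)}\le 2^{p^+}\bigl(\abs{Q}^{-\abs{p(x)-p(y)}}\bigr)^m\le c$. Your version simply makes the logarithms explicit.
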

\begin{proof}
  Recall that $\ell(Q) \leq 1$ and $\abs{Q} \leq 1$.  Since $\kappa \leq
  \abs{Q}^{-m}$ for $\kappa \in [0,1]$ we have $\abs{Q}^m \leq
  \kappa + t \leq 2\,\abs{Q}^{-m}$.  Thus the local $\log$-H{\"o}lder
  continuity of~$p$ implies
  \begin{align*}
    (\kappa + t)^{p(x)-p(y)} &\leq 2^{p^+}
    \big(\abs{Q}^{-\abs{p(x)-p(y)}}\big)^m \leq c,
  \end{align*}
  where the constant depends on $c_{\log}(p)$, $p^+$ and~$m$. 
\end{proof}

\subsection{Key estimate of variable exponents}
\label{ssec:keyestimate}

For every convex function~$\psi$ and every cube~$Q$ we have by
Jensen's inequality
\begin{align}
  \label{eq:jensen}
  \psi\bigg(\dashint_Q \abs{f(y)}\,dy\bigg) &\leq \dashint_Q \psi(
  \abs{f(y)})\,dy.
\end{align}
This simple but crucial estimate allows for example to transfer the
$L^1$-$L^\infty$ estimates for the interpolation operators to the
setting of Orlicz spaces, see~\cite{DieR07}. Therefore, it is necessary
for us to find a suitable substitute for Jensen's inequality in the
context of variable exponents.

Our goal is to control $ ( \dashint_Q \abs{f(y)} \,dy
)^{p(x)}$ in terms of $\int_Q \abs{f(x)}^{p(x)} \,dx$. If $p$ is
constant, this is exactly Jensen's inequality and it holds for all $f
\in L^p(Q)$. However, for~$p$ variable it is
impossible to derive such estimates for all $f$ and we have to
restrict ourselves to a certain set of admissible functions~$f$.
Moreover, an additional error term appears. The following statement is
a special case of~\cite[Corollary~1]{DieSch13key}, which is an
improvement of~\cite[Corollary~4.2.5]{DieHHR11} and related estimates
from~\cite{DieHHMS09,Schw10}.
\begin{theorem}[key estimate]
  \label{thm:jensenpx}
  Let $p \in \PPln(\Rn)$ with $p^+<\infty$. Then for every $m>0$ there
  exists $c_1>0$ only depending on $m$ and $c_{\log}(p)$ and $p^+$
  such that
  \begin{align}
    \label{eq:jensenpx}
    \bigg(\dashint_Q \abs{f(y)}\,dy \bigg)^{p(x)} &\leq c_1 \dashint_Q
    \abs{f(y)}^{p(y)} \,dy + c_1 \abs{Q}^m.
  \end{align}
  for every cube (or ball) $Q \subset \Rn$ with $\ell(Q) \leq 1$, all
  $x \in Q$ and all $f\in L^1(Q)$ with
  \begin{align*}
    \dashint_Q \abs{f}\,dy \leq \max \set{1, \abs{Q}^{-m}}.
  \end{align*}
\end{theorem}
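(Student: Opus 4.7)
The plan is to apply Lemma~\ref{lem:pxpy} twice, once to convert the outer exponent $p(x)$ to $p(y)$ (the variable of integration), and then once more on the inside to convert $p(y)$ to $p(z)$, with the classical Jensen inequality sandwiched in between. Throughout set $t:=\dashint_Q|f|\,dy$.

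\emph{Stage 1 (transferring the outer exponent).} If $t\le|Q|^{m/p^-}$, then $t^{p(x)}\le t^{p^-}\le|Q|^{m}$ and we are done. Otherwise $|Q|^{m/p^-}<t\le|Q|^{-m}$, so (since $\ell(Q)\le 1$ and $p^-\ge 1$) $t$ lies in the admissible range of Lemma~\ref{lem:pxpy} with $\kappa=0$, giving $t^{p(x)-p(y)}\le c$ uniformly in $y\in Q$. The constant $t^{p(x)}$ therefore equals its own average in $y$, and bounding it pointwise by $c\,t^{p(y)}$ and then applying Jensen with the fixed exponent $p(y)\ge 1$ (followed by Fubini) yields
\begin{align*}
 t^{p(x)}\le c\dashint_Q t^{p(y)}\,dy\le c\dashint_Q\bigg(\dashint_Q|f(z)|^{p(y)}\,dy\bigg)\,dz.
\end{align*}

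\emph{Stage 2 (transferring the inner exponent).} For each fixed $z\in Q$ we need to bound the inner average $\dashint_Q|f(z)|^{p(y)}\,dy$ by $c|f(z)|^{p(z)}+c|Q|^{m}$. The idea is to repeat the same trichotomy, now on the constant $|f(z)|$ in place of $t$: if $|f(z)|\le|Q|^{m/p^-}$ the contribution is at most $|Q|^{m}$; if $|Q|^{m/p^-}<|f(z)|\le|Q|^{-M'}$ a second application of Lemma~\ref{lem:pxpy} gives $|f(z)|^{p(y)-p(z)}\le c$ uniformly in $y$, and hence $|f(z)|^{p(y)}\le c|f(z)|^{p(z)}$; the tail $|f(z)|>|Q|^{-M'}$ is sparse, since the hypothesis $\dashint_Q|f|\le|Q|^{-m}$ combined with Chebyshev confines it to a set of measure $\lesssim|Q|^{1+M'-m}$.

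\emph{Main obstacle.} The unbounded tail $|f(z)|>|Q|^{-M'}$ is the delicate case: the log-Hölder bound $|p(y)-p(z)|\lesssim c_{\log}(p)/\log(1/\ell(Q))$ can absorb a factor of $|f(z)|$ raised to that small exponent only as long as $\log|f(z)|\lesssim\log(1/\ell(Q))$, so a pointwise estimate fails. One resolves this by a dyadic layer-cake decomposition of the super-level sets $\{|f|>2^k|Q|^{-M'}\}$, pairing the small-measure estimate from Chebyshev against the crude upper bound $|f(z)|^{p(y)}\le|f(z)|^{p^+}$ on each layer, and verifying that for $M'$ chosen large enough relative to $m$, $p^+$ and $c_{\log}(p)$ the resulting geometric series sums into $c|Q|^{m}$. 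This is the only step where the hypothesis $\dashint_Q|f|\le\max\{1,|Q|^{-m}\}$ is used in its full strength; everywhere else only log-Hölder continuity and $\ell(Q)\le 1$ play a role.
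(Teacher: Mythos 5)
First, a remark on the ground truth: the paper does not prove Theorem~\ref{thm:jensenpx} at all --- it is quoted as a special case of \cite[Corollary~1]{DieSch13key} --- so your proposal has to be judged on its own merits. Stage~1 is correct, but it reduces the theorem to the Stage~2 inequality
$\dashint_Q\dashint_Q \abs{f(z)}^{p(y)}\,dy\,dz \le c\dashint_Q\abs{f(z)}^{p(z)}\,dz + c\abs{Q}^m$,
and this inequality is \emph{false}; the gap in your case~(iii) is not a technicality but the symptom of a wrong reduction. Concretely, fix $Q$ with $\ell(Q)\le 1$, fix a log-H{\"o}lder exponent with $p\equiv p_0$ near the center of $Q$ and $p\equiv p_0+\delta$ on at least half of $Q$, where $0<\delta\lesssim 1/\log(e+1/\ell(Q))$ (such a $p$ has log-H{\"o}lder constant independent of everything below), and take $f=\lambda\chi_E$ with $E$ a small ball at the center and $\lambda\abs{E}/\abs{Q}=\abs{Q}^{-m}$, so the hypothesis holds with equality. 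Then the left-hand side is $\tfrac{\abs{E}}{\abs{Q}}\dashint_Q\lambda^{p(y)}\,dy\ge\tfrac12\tfrac{\abs{E}}{\abs{Q}}\lambda^{p_0+\delta}$, while $\dashint_Q\abs{f}^{p(z)}\,dz=\tfrac{\abs{E}}{\abs{Q}}\lambda^{p_0}$; the ratio is $\tfrac12\lambda^{\delta}$, and letting $\abs{E}\to0$ (hence $\lambda\to\infty$) with $Q$, $p$, $m$ fixed sends both this ratio and the left-hand side itself to infinity while the error term $\abs{Q}^m$ stays fixed. So no constant $c$ can work. Your dyadic repair cannot close for the same reason: on the layer $\set{2^k\Lambda<\abs{f}\le 2^{k+1}\Lambda}$ with $\Lambda=\abs{Q}^{-M'}$, Chebyshev gains a factor $2^{-k}\Lambda^{-1}$ in measure but the crude bound $\abs{f}^{p^+}$ loses $(2^{k}\Lambda)^{p^+}$, so the series behaves like $\sum_k 2^{k(p^+-1)}$, which diverges for $p^+>1$, and enlarging $M'$ only worsens the prefactor $\abs{Q}^{-M'(p^+-1)}$.

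The underlying mistake is transferring the outer exponent to a \emph{generic} $p(y)$ and averaging in $y$: this forces you to estimate $\abs{f(z)}^{p(y)}$ for $y$ with $p(y)>p(z)$ at points where $\abs{f(z)}$ is huge, which is exactly what log-H{\"o}lder continuity cannot control. The correct route (essentially that of \cite{DieSch13key}) transfers the exponent downward to the minimum: on the range where your Stage~1 trichotomy is nontrivial, Lemma~\ref{lem:pxpy} gives $t^{p(x)}\le c\,t^{p_Q^-}$, and Jensen with the \emph{constant} exponent $p_Q^-$ yields $t^{p_Q^-}\le\dashint_Q\abs{f(z)}^{p_Q^-}\,dz$. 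The remaining pointwise comparison of $\abs{f(z)}^{p_Q^-}$ with $\abs{f(z)}^{p(z)}$ now only ever \emph{decreases} the exponent, so it is free on $\set{\abs{f}\ge 1}$, and on $\set{\abs{f}\le 1}$ your trichotomy in $\abs{f(z)}$ (cases (i) and (ii), with the roles of small and large values reversed) does work. In particular no unbounded tail case remains, and the hypothesis $\dashint_Q\abs{f}\le\max\set{1,\abs{Q}^{-m}}$ is only needed to place $t$ in the admissible range of Lemma~\ref{lem:pxpy}.
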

Let us introduce the notation
\begin{align*}
  \phi(x,t) &:= t^{p(x)},
  \\
  (M_Q \phi)(t) &:= \dashint_Q \phi(x,t)\,dx,
  \\
  M_Q f &:= \dashint_Q \abs{f(x)}\,dx,
  \\
  \phi(f) &:= \phi( \cdot, \abs{f(\cdot)}).
\end{align*}
Then we can rewrite~\eqref{eq:jensenpx} as
\begin{align}
  \label{eq:jensenpx2}
  \phi(M_Q f)&\leq c\, M_Q(\phi(f)) + c\, \abs{Q}^{m}
\end{align}
for all $f$ with $M_Q f \leq \max \set{1, \abs{Q}^{-m}}$.

For our finite element analysis we need this estimate extended to the
case of shifted Orlicz functions. For constant~$p$ this has been done
in~\cite{DieR07}. We define the shifted functions $\phi_a$ for $a\geq
0$ by
\begin{align*}
  \phi_a(x,t) &:= \int_0^t \frac{\phi'(x,a+\tau)}{a+\tau} \tau\,d\tau.
\end{align*}
Then $\phi_a(x,\cdot)$ is the shifted N-function of $t \mapsto
t^{p(x)}$, see~\eqref{eq:def_shift}. Note that characteristics and
$\Delta_2$-constants of $\phi_a(x,\cdot)$ are uniformly bounded with
respect to~$a\geq 0$, see Section~\ref{sec:Orlicz spaces}.

\begin{remark}
  \label{rem:shift_in_out}
  It is easy to see that for all $a,t \geq 0$ we have
  \begin{align*}
    (M_Q \phi_a)(t) = (M_Q \phi)_a(t).
  \end{align*}
  If $p^+< \infty$, then $\Delta_2(\set{M_Q \phi_a}_{a \geq 0}) <
  \infty$.
\end{remark}

In the rest of this section we will prove the following theorem.
\begin{theorem}[shifted key estimate]
  \label{thm:jensenpxshift}
  Let $p \in \PPln(\Rn)$ with $p^+<\infty$. Then for every $m>0$ there
  exists $c>0$ only depending on $m$, $c_{\log}(p)$ and $p^+$
  such that
  \begin{align*}
    \phi_a(x,M_Q f)&\leq c\, M_Q(\phi_a(f)) + c\, \abs{Q}^{m}
  \end{align*}
  for every cube (or ball) $Q \subset \Rn$ with $\ell(Q) \leq 1$, all
  $x \in Q$ and
  all $f\in L^1(Q)$ with
  \begin{align*}
    a+\dashint_Q \abs{f}\,dy \leq \max \set{1, \abs{Q}^{-m}} =
    \abs{Q}^{-m}. 
  \end{align*}
\end{theorem}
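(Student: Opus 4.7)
The plan is to reduce to the unshifted key estimate (Theorem~\ref{thm:jensenpx}) via the standard pointwise equivalence
\begin{align*}
  \phi_a(x,t) \sim a^{p(x)-2}t^2 \quad(t\le 2a),\qquad
  \phi_a(x,t) \sim t^{p(x)} = \phi(x,t) \quad(t\ge 2a),
\end{align*}
where the implicit constants depend only on $p^+$. I split $f = f_1 + f_2$ with $f_1 := f\chi_{\{|f|\le 2a\}}$ and $f_2 := f\chi_{\{|f|>2a\}}$. The uniform $\Delta_2$-property of $\{\phi_a(x,\cdot)\}$ from $p^+<\infty$ (see Remark~\ref{rem:shift_in_out}) reduces the task to estimating $\phi_a(x,M_Q|f_i|)$ for $i=1,2$ separately. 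Throughout I assume $a\in[|Q|^m,|Q|^{-m}]$: the upper bound is built into the hypothesis, and the regime $a<|Q|^m$ is handled trivially since all quadratic-regime contributions are then $\le c\,a^{p(x)}\le c|Q|^{mp^-}\le c|Q|^m$ (recall $p^-\ge 1$), while the large subcase of $f_2$ appeals to Theorem~\ref{thm:jensenpx} without requiring any control on $a^{p(x)-p(y)}$.

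For $f_1$: since $M_Q|f_1|\le 2a$ lies in the quadratic regime, $\phi_a(x,M_Q|f_1|)\le c\,a^{p(x)-2}(M_Q|f_1|)^2\le c\,a^{p(x)-2}M_Q|f_1|^2$ by Jensen applied to $t\mapsto t^2$. Lemma~\ref{lem:pxpy} with $t=a$, $\kappa=0$ yields $a^{p(x)-2}\le c\,a^{p(y)-2}$ pointwise in $y\in Q$, and reinstating the quadratic equivalence in the $y$-variable then gives $\phi_a(x,M_Q|f_1|)\le c\,M_Q\phi_a(f_1)\le c\,M_Q\phi_a(f)$. For $f_2$: on its support $|f_2|>2a$, hence $\phi_a(y,|f_2|)\sim\phi(y,|f_2|)=|f_2|^{p(y)}$. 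If $M_Q|f_2|\ge a$, the LHS $\phi_a(x,M_Q|f_2|)\sim(M_Q|f_2|)^{p(x)}$ lies in the power regime; Theorem~\ref{thm:jensenpx} applied to $f_2$ (permissible since $M_Q|f_2|\le|Q|^{-m}$) gives $(M_Q|f_2|)^{p(x)}\le c\,M_Q\phi(f_2)+c|Q|^m\le c\,M_Q\phi_a(f)+c|Q|^m$.

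The remaining subcase $M_Q|f_2|<a$ is the main obstacle: the LHS now sits in the quadratic regime, $\phi_a(x,M_Q|f_2|)\le c\,a^{p(x)-2}(M_Q|f_2|)^2$, whereas $|f_2|$ on its support is in the power regime of $\phi_a(y,\cdot)$, so neither the unshifted key estimate nor a direct $t^2$-Jensen bridges them cleanly. The key trick is the elementary inequality $(M_Q|f_2|/a)^2\le M_Q|f_2|/a$ (valid because $M_Q|f_2|\le a$), which converts the quadratic factor into a linear one:
\begin{align*}
  \phi_a(x,M_Q|f_2|)\le c\,a^{p(x)-1}M_Q|f_2| = c\,M_Q\bigl(a^{p(x)-1}|f_2(y)|\bigr).
\end{align*}
On $\{|f_2|>2a\}$, Lemma~\ref{lem:pxpy} yields $a^{p(x)-1}\le c\,a^{p(y)-1}\le c\,|f_2(y)|^{p(y)-1}$ (using $a<|f_2(y)|$ and $p(y)\ge 1$), so $a^{p(x)-1}|f_2(y)|\le c\,|f_2(y)|^{p(y)}\sim c\,\phi_a(y,|f_2(y)|)$, and this contribution too is bounded by $c\,M_Q\phi_a(f)$. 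Summing the three pieces proves the claim with constants depending on $c_{\log}(p)$, $p^+$, and $m$.
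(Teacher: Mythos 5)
Your argument is correct, and it takes a genuinely different route from the paper's. The paper organizes the proof by cases on the size of the \emph{average}: when $M_Q f\ge a$ it truncates from below ($f_1=2f\chi_{\{|f|\ge a/2\}}$), so that both sides live in the power regime and Theorem~\ref{thm:jensenpx} applies directly; when $M_Qf\le a\le |Q|^m$ everything is absorbed into the error term; and in the remaining case $M_Qf\le a$, $a\ge|Q|^m$ it freezes the exponent at a point $x_Q^-$ where $p$ attains its minimum on $Q$, applies the classical Jensen inequality for the fixed N-function $\phi_a(x_Q^-,\cdot)$, and afterwards compares $\phi_a(x_Q^-,f)$ with $\phi_a(x,f)$ pointwise via Lemma~\ref{lem:pxpy}. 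You instead decompose the \emph{function} pointwise at height $2a$ and avoid any top-level comparison of $M_Qf$ with $a$: the small part is dispatched by the quadratic regime plus ordinary Jensen for $t\mapsto t^2$, and the genuinely new ingredient is your treatment of the subcase where the large part has small average, via the interpolation $(s/a)^2\le s/a$ followed by $a^{p(y)-1}\le|f_2(y)|^{p(y)-1}$ on the support of $f_2$; this replaces the paper's freezing-plus-Jensen step. Both proofs rest on the same two pillars (Theorem~\ref{thm:jensenpx} and Lemma~\ref{lem:pxpy}), and your bookkeeping of the regimes $a<|Q|^m$ versus $a\in[|Q|^m,|Q|^{-m}]$ is complete. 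A minor bonus of your version is that it works throughout with $\phi'(x,a+\tau)/(a+\tau)$ rather than $\phi''$, so it is insensitive to the degeneracy of $\phi''$ at $p(x)=1$, which the paper's third case formally relies on.
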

\begin{proof}
  We split the proof into three case:
  \begin{enumerate}
  \item $M_Q f \geq a$,
  \item $M_Q f \leq a \leq \abs{Q}^m$,
  \item $M_Q f \leq a$ and $\abs{Q}^m \leq a$.
  \end{enumerate}
  {\bf Case: $M_Q f \geq a$}

  Let $f_1 := 2f \chi_{\set{\abs{f} \geq \frac a2}}$. Then $M_Q f \geq
  a$, implies
  \begin{align*}
    M_Q f_1 = M_Q (2f \chi_{\set{\abs{f} \geq \frac a2}}) &\geq M_Q f -
    \frac a 2 \geq \frac 12 M_Q f \geq \frac a2.
  \end{align*} 
  Thus with the $\Delta_2$-condition, Remark~\ref{rem:shift_in_out},
  Lemma~\ref{lem:shift_sim} (using $M_Q f_1 \gtrsim a$) we get
  \begin{align*} 
    \phi_a(x,M_Q f) &\leq c\, \phi_a(x,M_Q f_1)
    \sim c\, \phi(x,M_Q f_1).
    \\
    \intertext{Thus, the key estimate of Theorem~\ref{thm:jensenpx}
      implies}%
    \phi_a(x,M_Q f) &\leq c\, M_Q \big(\phi(f_1)\big) + c\, \abs{Q}^{m}.
    \\
    \intertext{Now by Lemma~\ref{lem:shift_sim} and the definition
      of~$f_1$ we have $\phi(f_1) \sim \phi_a(f_1)$ and $\abs{f_1}
      \leq 2\abs{f}$. Thus}%
    \phi_a(x,M_Q f)  &\leq c\, M_Q \big(\phi_a(f_1)\big) + c\, \abs{Q}^{m}
    \\
    &\leq c\, M_Q \big(\phi_a(f)\big) + c\, \abs{Q}^{m}.
  \end{align*}
  This proves the claim.

  {\bf Case: $M_Q f \leq a \leq \abs{Q}^m$}
  
  Using Corollary~\ref{cor:shift_ch}, the key estimate
  Theorem~\ref{thm:jensenpx}, and again Corollary~\ref{cor:shift_ch} we
  estimate
  \begin{align*}
    \phi_a(x,M_Q f) &\leq c\, \phi(x,M_Q f) + c\, \phi(x,a)
    \\
    &\leq c\, \phi(x,a)
    \\
    &\leq c\, a \phi(x,1)
    \\
    &\leq c\, \abs{Q}^m.
  \end{align*}
  This proves the claim.

  {\bf Case:  $M_Q f \leq a$ and $\abs{Q}^m \leq a$}

  Since $M_Q f \leq a$, we have
  \begin{align*}
    \phi_a(x,M_Q f) &\leq c\, \phi''(a) (M_Q f)^2.
  \end{align*}
  Recall that $\abs{Q}^m \leq a \leq \abs{Q}^{-m}$. We choose $x_Q^- \in
  Q$ such that $p(x_Q^-) = p_Q^-$. Now we conclude with
  Lemma~\ref{lem:pxpy} that
  \begin{align*}
    \phi''(a) &\leq c\,\phi''(x_Q^-, a) \leq c\,\phi''(x_Q^-, a+M_Q f)
  \end{align*}
  using in the last step that $M_Q f \leq a$. This and the previous
  estimate imply
  \begin{align*}
    \phi_a(x,M_Q f) &\leq c\, \phi''(x_Q^- a+M_Q f) (M_Q f)^2 \leq c\,
    \phi_a(x_Q^-,M_Q f)
    \\
    \intertext{We can now apply Jensen's inequality for the N-function
      $\phi_a(x_Q^-,\cdot)$ to conclude} \phi_a(x,M_Q f) &\leq
    c\, M_Q\big(\phi_a(x_Q^-,f)\big).
  \end{align*}
  Now, the claim follows if we can prove
  \begin{align}
    \label{eq:aux1}
    \phi_a(x_Q^-,f) \leq c\, \phi_a(x,f) \qquad \text{point wise on
      ~$Q$}.
  \end{align}
  So let $x \in Q$. If
  $\abs{f(x)}\leq a$, then with Lemma~\ref{lem:pxpy} and $\abs{Q}^m
  \leq a + \abs{f(x)} \leq 2\, \abs{Q}^{-m}$ we have
  \begin{align*}
    \phi_a(x_Q^-,\abs{f(x)}) &\leq c\,
    \phi_a''(x_Q^-,a+\abs{f(x)})\, \abs{f(x)}^2
    \\
    &\leq c\, \phi''(x,a+\abs{f(x)})\, \abs{f(x)}^2
    \\
    &\leq c\, \phi_a(x,\abs{f(x)}).
  \end{align*}
  It $\abs{f(x)} \geq a$, then
  \begin{align*}
    \phi_a(x_Q^-, \abs{f(x)}) &\leq c\,\phi_a(x,\abs{f(x)})\,
    \frac{\phi''(x_Q^-, a+\abs{f(x)})}{\phi''(x,a+\abs{f(x)})}
    \\
    &\leq c\,\phi_a(x,\abs{f(x)})\, (\kappa +
    a+\abs{f(x)})^{p_Q^- - p(x)}
    \\
    &\leq c\,\phi_a(x,\abs{f(x)})\, a^{p_Q^- - p(x)}.
    \\
    &\leq c\,\phi_a(x,\abs{f(x)}),
  \end{align*}
  where we used Lemma~\ref{lem:pxpy} in the last step and $\abs{Q}^m
  \leq a \leq \abs{Q}^{-m}$. Therefore \eqref{eq:aux1} folds, which concludes the proof.
\end{proof}

\subsection{\Poincare{} type estimates}
\label{ssec:Poincare}

We will show now that the shifted key estimate of
Theorem~\ref{thm:jensenpxshift} implies the following \Poincare{} type
estimate.
\begin{theorem}
  \label{thm:poincareshift}
  Let $p \in \PPln(\Rn)$ wit $p^+<\infty$. Then for every $m>0$ there
  exists $c>0$ only depending on $m$ and $c_{\log}(p)$ and $p^+$
  such that
  \begin{align*}
    \int_Q \phi_a\bigg(x,
    \frac{\abs{u(x)-\mean{u}_Q}}{\ell(Q)} \bigg)\,dx&\leq c\,
    \int_Q \phi_a(x,\abs{\nabla u(x)})\,dx + c\, \abs{Q}^{m}
  \end{align*}
  for every cube (or ball) $Q \subset \Rn$ with $\ell(Q) \leq 1$ and
  all $u\in W^{1,\px}(Q)$ with
  \begin{align*}
    a+\dashint_Q \abs{\nabla u}\,dy \leq \max \set{1, \abs{Q}^{-m}} =
    \abs{Q}^{-m}. 
  \end{align*}
\end{theorem}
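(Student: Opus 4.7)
The plan is to mirror the three-case structure used just above in the proof of Theorem~\ref{thm:jensenpxshift}, first reducing to the unshifted ($a=0$) Poincar\'e inequality (which follows from Theorem~\ref{thm:jensenpx}) and then handling the small-gradient regimes by the shift identities $\phi_a\sim \phi$ on $[a,\infty)$ and $\phi_a(t)\sim \phi''(a+t)\,t^2$ on $[0,a]$ that are encoded in Lemma~\ref{lem:shift_sim} and Corollary~\ref{cor:shift_ch}.

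First I would establish the unshifted version
\begin{align*}
  \int_Q \phi\bigg(x,\frac{\abs{u-\mean{u}_Q}}{\ell(Q)}\bigg)\,dx
  \leq c\int_Q \phi(x,\abs{\nabla u})\,dx + c\,\abs{Q}^m.
\end{align*}
Starting from the classical pointwise estimate $\abs{u(x)-\mean{u}_Q}\leq c\int_Q \abs{\nabla u(y)}/\abs{x-y}^{n-1}\,dy$, split the integral into dyadic annuli around~$x$ so that
\begin{align*}
  \frac{\abs{u(x)-\mean{u}_Q}}{\ell(Q)} \leq c\sum_{k\geq 0} 2^{-k}\, M_{Q_k(x)}\abs{\nabla u},
\end{align*}
with $Q_k(x)\subset Q$ of side $2^{-k}\ell(Q)$. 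Using the uniform $\Delta_2$ of $\phi(x,\cdot)$, $\phi(x,\cdot)$ distributes through the geometric (convex) combination, and then Theorem~\ref{thm:jensenpx} applied on each $Q_k(x)$ (with a slightly enlarged $m'>m$ to absorb the dyadic factor $2^{kn m'}$) gives the bound. Summing in $k$ against $2^{-k}$ produces an error $c\abs{Q}^m$, and swapping orders of integration in the main term (together with a standard $L^1$ bound on the cube-maximal function) yields $c\int_Q \phi(y,\abs{\nabla u(y)})\,dy$.

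Next I would case-split the shifted inequality on the relative size of $a$ and $M_Q\abs{\nabla u}$, exactly as in the proof of Theorem~\ref{thm:jensenpxshift}. In Case $M_Q\abs{\nabla u}\geq a$, set $\mathbf{g}:=2\nabla u\,\chi_{\{\abs{\nabla u}\geq a/2\}}$; by Lemma~\ref{lem:shift_sim} one has $\phi_a\sim \phi$ on the set where $\abs{\nabla u}\geq a$ and on the analogous set for $\abs{u-\mean{u}_Q}/\ell(Q)$, reducing the estimate to the unshifted version from Step~1 applied to a companion function whose gradient is essentially~$\mathbf{g}$. In Case $M_Q\abs{\nabla u}\leq a\leq \abs{Q}^m$, Corollary~\ref{cor:shift_ch} gives $\phi_a(x,t)\leq c\,\phi(x,a)\leq c\,a\,\phi(x,1)\leq c\abs{Q}^m$ for every $t\lesssim a$, and the classical $L^1$-Poincar\'e controls $\abs{u-\mean{u}_Q}/\ell(Q)$ by $M_Q\abs{\nabla u}\leq a$ on average, so the whole left-hand side is absorbed by~$c\abs{Q}^m$. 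In Case $\abs{Q}^m\leq a$ and $M_Q\abs{\nabla u}\leq a$, the equivalence $\phi_a(x,t)\sim \phi''(x,a)\,t^2$ together with Lemma~\ref{lem:pxpy} (which makes $\phi''(\cdot,a)$ essentially $x$-independent on~$Q$) reduces the inequality to the standard $L^2$-Poincar\'e, after which the same $x\leftrightarrow y$ transfer via Lemma~\ref{lem:pxpy} converts the right-hand side back to $\int_Q \phi_a(x,\abs{\nabla u})\,dx$.

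The main obstacle I expect is the first case: the truncation $\nabla u\mapsto \nabla u\,\chi_{\{\abs{\nabla u}\geq a/2\}}$ does not commute with the representation of $u-\mean{u}_Q$, so either one carries out the dyadic Step~1 directly with Theorem~\ref{thm:jensenpxshift} in place of Theorem~\ref{thm:jensenpx}, which requires verifying that the admissibility hypothesis $a+M_{Q_k}\abs{\nabla u}\leq \abs{Q_k}^{-m'}$ continues to hold on every dyadic subcube $Q_k$ (forcing $m'>m$ and careful bookkeeping of errors across scales), or one argues by a shift-equivalence on level sets using that $\phi_a$ annihilates gradients below the scale $a$ up to quadratic perturbation. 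In either route no new conceptual ingredient beyond those already employed in the proof of Theorem~\ref{thm:jensenpxshift} is needed, only an extension of their combinatorics to the dyadic Poincar\'e decomposition.
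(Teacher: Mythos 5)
Your primary route --- prove the unshifted Poincar\'e inequality first and then run the three-case analysis from the proof of Theorem~\ref{thm:jensenpxshift} --- has a structural gap that affects all three cases: the case distinction there is on the single number $M_Q f$, and each case crucially uses that the \emph{argument} of $\phi_a(x,\cdot)$ on the left-hand side is that same number. In the Poincar\'e inequality the left-hand argument is the pointwise quantity $\abs{u(x)-\mean{u}_Q}/\ell(Q)$, which is not controlled by $a$ (nor by $M_Q\abs{\nabla u}$) pointwise, only on average. Concretely: in your Case 2 the bound $\phi_a(x,t)\le c\,\phi(x,a)$ needs $t\lesssim a$, but $\abs{u(x)-\mean{u}_Q}/\ell(Q)$ can be large on a small set even when its mean is $\le a$ (Jensen goes the wrong way for the convex $\phi_a$), so the left-hand side is not absorbed by $c\abs{Q}^m$; in Case 3 the equivalence $\phi_a(x,t)\sim\phi''(x,a)\,t^2$ likewise holds only for $t\lesssim a$, so you do not reduce to an $L^2$-Poincar\'e inequality; and in Case 1 the truncated field $2\nabla u\,\chi_{\set{\abs{\nabla u}\ge a/2}}$ is not the gradient of any function, so there is no ``companion function'' to which the unshifted inequality applies, and replacing $\phi_a$ by $\phi$ on the left would require the pointwise lower bound $\abs{u(x)-\mean{u}_Q}/\ell(Q)\gtrsim a$, which is unavailable.

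The fallback you name in your last paragraph is the correct route and is exactly what the paper does: there is no case analysis at the level of the Poincar\'e inequality at all. One writes $\abs{u(x)-\mean{u}_Q}\le c\,\ell(Q)\sum_k 2^{-k}\,T_k(\chi_Q\abs{\nabla u})(x)$ with the dyadic averaging operators $T_k$, pulls $\phi_a(x,\cdot)$ through the convex combination, and applies the already-proven shifted key estimate of Theorem~\ref{thm:jensenpxshift} on each dyadic cube $2W$. The admissibility bookkeeping is lighter than you anticipate: assuming without loss of generality $m\ge1$, one has $a+\dashint_{2W}\chi_Q\abs{\nabla u}\,dy\le\frac{\abs{Q}}{\abs{2W}}\bigl(a+\dashint_Q\abs{\nabla u}\,dy\bigr)\le\max\set{1,\abs{2W}^{-m}}$ with the \emph{same} $m$, and the error terms $\abs{W}^m$ sum to $c\,\abs{Q}^m$ again because $m\ge1$; no enlarged exponent $m'>m$ is needed.
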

\begin{proof}
  The proof is similar to the one of Proposition~8.2.8
  of~\cite{DieHHR11}. We can assume $m \ge 1$.

  By $\mathcal{W}_k$ we denote the family of dyadic cubes of size
  $2^{-k}$, i.e.  $2^{-k} ((0,1)^n + \overline{k})$ with $k \in \setZ$
  and $\overline{k} \in \setZ^n$. Let us fix $k_0\in\setZ$ such that
  $2^{-k_0-1}\leq \ell(Q)\leq 2^{-k_0} \le 1$. Then as in the proof of
  Proposition~8.2.8 of~\cite{DieHHR11} we have
  \begin{align*}
    |u(x) - \mean{u}_Q| \leq c \int_Q \frac{|\nabla
      u(y)|}{|x-y|^{n-1}} dy \leq c\, \ell(Q) \sum_{k=k_0+2}^\infty 2^{-k}
    T_k \big(\chi_Q |\nabla u|\big) (x),
  \end{align*}
  where the {\em averaging operator $T_k$} is given by
  \begin{align*}
    T_k f &:= \sum_{W \in \mathcal{W}_k} \chi_W M_{2W} f
  \end{align*}
  for all $f \in L^1_{\loc}(\Rn)$, where $2W$ is the cube with the
  same center as~$W$ but twice its diameter.

  We estimate by convexity and the locally finiteness of the
  family~$\mathcal{W}_k$
  \begin{align*}
    I &:= \int_Q \phi_{a}\bigg(x,
    \frac{\abs{u(x)-\mean{u}_Q}}{\ell(Q)} \bigg)\,dx
    \\
    &\leq c\, \sum_{k=k_0}^\infty 2^{-k} \int_Q \phi_a \bigg(x,
    \sum_{W \in \mathcal{W}_k} \chi_W(x) M_{2W} (\chi_Q \abs{\nabla
      u}) \bigg) \,dx
    \\
    &\leq c \sum_{k=k_0}^\infty 2^{-k}\sum_{W \in
      \mathcal{W}_k} \int_{W} \phi_a \big(x, M_{2W}
    (\chi_Q \abs{\nabla u}) \big) \,dx
    \\
    &\leq c \sum_{k=k_0}^\infty 2^{-k}\sum_{W \in
      \mathcal{W}_k} \int_{2W} \phi_a \big(x, M_{2W}
    (\chi_Q \abs{\nabla u}) \big) \,dx.
  \end{align*}
  Using the assumption on $u$, $\ell(2W) \leq 2^{-k_0-1} \le
  \ell(Q) \le 1$ and $m \ge 1$ we estimate
  \begin{align*}
    a+\dashint_{2W} \chi_Q \abs{\nabla u}\,dy \leq 
    a+ \frac{\abs{Q}}{\abs{2W}} \dashint_Q \abs{\nabla u}\,dy 
    \le \frac{\abs{Q}}{\abs{2W}} \bigg( a + \dashint_Q \abs{\nabla
      u}\,dx \bigg) \le
    \max \set{1, \abs{2W}^{-m}}.
  \end{align*}
  % Since $\norm{\chi_Q \nabla u}_{\px} \leq 1$, we have with H{\"o}lder's
  % inequality for variable exponents
  % \begin{align*}
  %   \dashint_{2W} \abs{\nabla u}\,dx \leq 2\,\abs{2W}^{-1}
  %   \norm{\chi_Q \nabla u}_\px \norm{\chi_{2W}}_{\pdx} \leq c\,
  %   \abs{2W}^{-1}.
  % \end{align*}
  In particular, this and $\ell(2W) \le 1$ allow to
  apply the shifted key estimate of
  Theorem~\ref{thm:jensenpxshift}. This yields
  \begin{align*}
    I &\leq c \sum_{k=k_0}^\infty 2^{-k}\sum_{W \in \mathcal{W}_k}
    \int_{2W}\bigg(\dashint_{2W} \chi_Q(\phi_a (y, \abs{\nabla
      u}) \,dy+\abs{W}^m\bigg)\,dx
    \\
    &\leq c \sum_{k=k_0}^\infty 2^{-k}\sum_{W \in \mathcal{W}_k}
    \bigg(\int_{2W} \chi_Q \phi_a(y, \abs{\nabla u})
    \,dy+\abs{Q}^m\bigg)
    \\
    &\leq c \int_Q\phi_a (y,\abs{\nabla u}) \,dy+\abs{Q}^m
  \end{align*}
\end{proof}

\section{Interpolation and variable exponent spaces}
\label{sec:interpolation-wk-phi}
\noindent
Let $\Omega \subset \setR^n$ be a connected, open (possibly unbounded)
domain with polyhedral boundary. We assume that $\partial \Omega$ is
Lipschitz continuous. For an open, bounded (non-empty) set $U \subset
\setR^n$ we denote by $\abs{U}$ its $n$-dimensional Lebesgue measure,
by $h_U$ the diameter of $U$, and by $\rho_U$ the supremum of the
diameters of inscribed balls.  For $f \in L^1_\loc(\setR^n)$ we define
\begin{align*}
  \dashint_U f(x)\,dx &:= \frac{1}{\abs{U}} \int_U f(x)\,dx.
\end{align*}
For a finite set $A$ we define $\# A$ to be the number of elements of
$A$.  We write $f\sim g$ iff there exist constants $c, C>0$, such
that
\begin{align*}
  c\,f &\le g \le C\,f\,,
\end{align*}
where we always indicate on what the constants may depend.
Furthermore, we use $c$ as a generic constant, i.e.\ its
value my change from line to line but does not depend on the important
variables.

Let $\mathcal{T}_h$ be a simplicial subdivision of $\Omega$. By
\begin{align*}
  h &:= \max_{K \in \mathcal{T}_h} h_K
\end{align*}
we denote the maximum mesh size. We assume that $\mathcal{T}_h$ is
non-degenerate:
\begin{align}
  \label{eq:nondeg}
  \max_{K \in \mathcal{T}_h} \frac{h_K}{\rho_K} \leq \gamma_0.
\end{align}
For $K \in \mathcal{T}_h$ we define the set of neighbors $N_K$ and
the neighborhood $S_K$ by
\begin{align*}
  N_K &:= \set{K' \in \mathcal{T}_h \,:\, \overline{K'} \cap
    \overline{K} \not= \emptyset},
  \\
  S_K &:= \text{interior} \bigcup_{K' \in N_K} \overline{K'}.
\end{align*}
Note that for all $K,K' \in \mathcal{T}_h$: $K' \subset
\overline{S_{K}} \Leftrightarrow K \subset \overline{S_{K'}}
\Leftrightarrow \overline{K} \cap \overline{K'} \not=\emptyset$. Due
to our assumption on $\Omega$ the $S_K$ are connected, open bounded sets. 

It is easy to see that the non-degeneracy~\eqref{eq:nondeg} of
$\mathcal{T}_h$ implies the following properties, where the constants
are independent of $h$:
\begin{enumerate}%[({Mesh}$_1$):]
\item \label{mesh:SK} $\abs{S_K} \sim \abs{K}$ for all $K \in
  \mathcal{T}_h$.
\item \label{mesh:NK} There exists $m_1 \in \setN$ such that $\# N_K
  \leq m_1$.
\end{enumerate}
  
For $G\subset \setR ^n$ and $m\in \setN _0$ we denote by
$\mathcal{P}_m(G)$ the polynomials on $G$ of degree less than or equal
to $m$. Moreover, we set $\mathcal{P}_{-1}(G):=\set {0}$. Let us
characterize the finite element space $V_h$ as
\begin{align}\label{def:Vh}
  V_h &:= \set{v \in L^1_\loc(\Omega)\,:\, v|_K
    \in \mathcal{P}_K},
\end{align}
where
\begin{align}
  \label{eq:Pr0}
  \mathcal{P}_{r_0} (K) \subset \mathcal{P}_K \subset
  \mathcal{P}_{r_1}(K)
\end{align}
for given $r_0 \leq r_1 \in \setN_0$. 
% Note that our definition of the
% space $V_h$ includes also non-conforming finite element spaces. 
Since $r_1
\in \setN_0$ there 
exists a constant $c=c(r_1)$ such that for all $\bfv_h \in (V_h)^N$, $K
\in \mathcal{T}_h$, $j \in \setN_0$, and $x\in K$ holds
\begin{align}
  \label{eq:pntwmean}
  h^j\abs{\nabla^j \bfv_h(x)} &\leq c\, h^j\dashint_{K}
  \abs{\nabla^j \bfv_h(y)} \,dy\leq c\dashint_K\abs{\bfv_h(y)}\,dy,
\end{align}
where we used \eqref{eq:nondeg}. Here and in the remainder of the
paper gradients of functions from $ V_h$ are always understood in a
local sense, i.e.~on each simplex $K$ it is the pointwise gradient of
the local polynomial.

We will show in this section how the classical results for the
interpolation error generalizes to the setting of Sobolev spaces with
variable exponents.  Instead, of deriving estimates for a specific
interpolation operator, we will deduce our results just from some
general assumptions on the operator. Note that e.g.~the Scott-Zhang
operator~\cite{ScoZha90} satisfies all our requirements.  More
precisely, we assume the following.
\begin{assumption}
  \label{ass:intop}
  Let $l_0 \in \setN_0$ and let $\Pi_h \,:\, (W^{l_0,1}(\Omega))^N \to
  (V_h)^N$.
  \begin{enumerate}
  \item For some $l \geq l_0$ and $m_0 \in \setN_0$ holds uniformly in $K \in
    \mathcal{T}_h$ and $\bfv \in (W^{l,1}(\Omega))^N$
  \begin{align}
    \label{eq:stab}
    \sum_{j=0}^{m_0} \dashint_K \abs{h_K^j \nabla^j \Pi_h \bfv}\,dx &\leq
    c(m_0,l)\, \sum_{k=0}^l h_K^{k} \dashint_{S_K} \abs{\nabla^k
      \bfv}\,dx.
  \end{align}
\item For all $\bfv \in (\mathcal{P}_{r_0})^N(\Omega)$ holds
  \begin{align}
    \label{eq:proj}
    \Pi_h \bfv &= \bfv.
  \end{align}
  \end{enumerate}
\end{assumption}

Note that the constant in \eqref{eq:stab} will depend on the
non-degeneracy constant $\gamma_0$ of the mesh. In this way all the
results below will depend on~$\gamma_0$.

\begin{remark}
  The property \eqref{eq:stab} is called {\em
    $W^{l,1}(\Omega)$-stability} of the interpolation operator
  $\Pi_h$. Because of \eqref{eq:pntwmean} we can choose $m_0$ to be $0$ with no loss of generality.

  In many cases, e.g.~for the \Clement{} and the Scott-Zhang
  interpolation operator, we have $\Pi_h \bfv := (\Pi_h v_1,
  \ldots,\Pi_h v_N)$. Both operators satisfy
  Assumption~\ref{ass:intop} (cf.~\cite{Cle75,ScoZha90}). In
  fact,   the scaling invariant formulation \eqref{eq:stab} can be
  easily derived from the proofs there. Note, that for the Scott-Zhang
  interpolation operator holds a much stronger property, namely
  \begin{align}
    \label{eq:proj1}
    \Pi_h \bfv &= \bfv, \qquad \forall \bfv \in (V_h)^N.
  \end{align}
\end{remark}

\begin{remark}
  The Scott-Zhang interpolation operator is defined in such a way that
  it preserves homogeneous boundary conditions, i.e.
  \begin{align*}
    \Pi_h \,:\,W^{1,1}_0(\Omega) \to  V \cap W^{1,1}_{0}.
  \end{align*}
  Thus we have to choose in this case $l_0=1$ in \eqref{eq:stab}.
  However, there is a version of the Scott-Zhang interpolation
  operator which does not preserve boundary values (cf.~remark
  after~(4.6) in~\cite{ScoZha90}). In this case we can choose $l_0 =0$
  in \eqref{eq:stab}.
\end{remark}

% \begin{remark}
%   There are also non-conforming finite element spaces and
%   corresponding interpolation operators which satisfy our Assumption
%   \ref{ass:intop}. For example the two-dimensional Crouzeix-Raviart
%   type element (cf.~\cite{ciarlet})
%   \begin{align*}
%     V_h &= \{ v \in L^1_{\loc}(\Omega)\,:\, v|K \in \mathcal{P}_1(K),
%     \\
%     &\phantom{=\,\,\{ }
%       v \text{ continuous at the midpoints of edges of }K\}
%   \end{align*}
%   together with the operator
%   \begin{align*}
%     \int_{e_i} \Pi_h v\,dx &= \int_{e_i} v\,dx,
%   \end{align*}
%   where the $e_i$ are the edges of~$K$,
%   satisfies~\eqref{eq:stab} for $l \geq l_0 := 1$.
% \end{remark}
Now, we will deduce solely from the Assumption~\ref{ass:intop} that
the same operator $\Pi_h$ is also a good interpolation operator for
the generalized Sobolev spaces $(W^{l,p(\cdot)}(\Omega))^N$. We begin with the
stability. 

\begin{lemma}[\bf Stability]
  \label{thm:stabo}
  Let $\Pi_h$ and $l$ satisfy Assumption~\ref{ass:intop}, $m>0$ and
  let $p\in\PPln(\Omega)$ with $p^+<\infty$. Then for all $j \in
  \setN_0$, $K \in \mathcal{T}_h$ and $\bfv \in (W^{l,p(\cdot)})^N$
  with
  \begin{align*}
    \max_{k=0,\dots,l} \dashint_{S_K} h_K^k \abs{\nabla^k \bfv}\,dy
    \leq c_2 \max \set{1, \abs{K}^{-m}}.
  \end{align*}
  holds
    \begin{align}
      \label{eq:stabo}
      \dashint_K \phi_a\big(\cdot,\abs{h_K^j \nabla^j \Pi_h
        \bfv}\big)\,dx &\leq c\, \sum_{k=0}^l
      \dashint_{S_K}\phi_a\big(\cdot,\abs{h_K^k\nabla^k
        \bfv}\big)\,dx+c\,h_K^m,
    \end{align}
    where $c = c(k,l,m, c_{\log},p^+,c_2)$.
\end{lemma}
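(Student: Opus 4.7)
The plan is to combine the $W^{l,1}$-stability of $\Pi_h$ with the shifted key estimate Theorem~\ref{thm:jensenpxshift}, applied on a ball containing $S_K$. First, since $\Pi_h\bfv|_K$ is a polynomial of degree at most $r_1$, the inverse-type estimate \eqref{eq:pntwmean} together with the stability \eqref{eq:stab} of Assumption~\ref{ass:intop} (it suffices to take $m_0=0$, again by \eqref{eq:pntwmean}) yields the pointwise bound
\[
\abs{h_K^j\nabla^j \Pi_h\bfv(x)}\leq c\,\sum_{k=0}^l \dashint_{S_K} h_K^k\abs{\nabla^k\bfv}\,dy \qquad \forall\, x\in K.
\]
Since $\phi_a(x,\cdot)$ is convex and satisfies a $\Delta_2$-condition uniform in $a\geq 0$ (see Section~\ref{sec:Orlicz spaces}), applying it to this inequality splits the sum and gives
\[
\phi_a\big(x,\abs{h_K^j\nabla^j \Pi_h\bfv(x)}\big)\leq c\sum_{k=0}^l \phi_a\bigg(x,\dashint_{S_K} h_K^k\abs{\nabla^k\bfv}\,dy\bigg).
\]

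By the non-degeneracy \eqref{eq:nondeg} of $\mathcal{T}_h$ one can choose a ball $B\supset S_K$ with $\ell(B)\sim h_K\leq 1$ and $\abs{B}\sim\abs{S_K}\sim\abs{K}$. I then apply Theorem~\ref{thm:jensenpxshift} with $Q=B$ and $f_k:=\chi_{S_K}\,h_K^k\abs{\nabla^k\bfv}$. The admissibility bound $a+\dashint_B\abs{f_k}\,dy\leq\abs{B}^{-m'}$ is inherited from the hypothesis on $\bfv$ (for some $m'\leq m/n$, absorbing the constants $\abs{S_K}/\abs{B}$ and $c_2$ into the exponent), provided the shift $a$ is similarly controlled by $\abs{K}^{-m}$. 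The conclusion of the shifted key estimate reads
\[
\phi_a\bigg(x,\dashint_{S_K} h_K^k\abs{\nabla^k\bfv}\,dy\bigg)\leq c\,\dashint_{S_K}\phi_a\big(y,h_K^k\abs{\nabla^k\bfv(y)}\big)\,dy+c\,\abs{K}^{m'}.
\]

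Integrating in $x$ over $K$, dividing by $\abs{K}$, summing over $k$, and rewriting $\abs{K}^{m'}\sim h_K^{nm'}$ as $h_K^{m}$ (permissible because $m>0$ is arbitrary) produces \eqref{eq:stabo}. The main obstacle lies in the middle step: verifying the admissibility hypothesis of Theorem~\ref{thm:jensenpxshift} on the enlarged ball $B$, which requires tracking the shift $a$ (implicitly bounded by the same $\abs{K}^{-m}$ as the hypothesis on $\bfv$) and carefully rescaling the exponent through the three passages $\abs{K}^m\leftrightarrow\abs{B}^m\leftrightarrow h_K^{nm}$; the uniformity of the $\Delta_2$-constants of $\phi_a$ in $a$ is crucial throughout to absorb constants coming from convexity and from the shape equivalence of $S_K$ and $B$.
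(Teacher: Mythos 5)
Your proof follows essentially the same route as the paper's: the pointwise bound \eqref{eq:pntwmean} combined with the $W^{l,1}$-stability \eqref{eq:stab}, then convexity and the uniform $\Delta_2$-condition of $\phi_a$ to split the sum, then the shifted key estimate of Theorem~\ref{thm:jensenpxshift} applied to the averages over (a ball comparable to) $S_K$, and finally integration over $K$. You are in fact more careful than the paper on two points it glosses over --- enlarging $S_K$ to a ball so that Theorem~\ref{thm:jensenpxshift} literally applies, and the implicit requirement that the shift $a$ be admissible for that theorem --- so the argument is fine.
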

\begin{proof}
  Since $p^+<\infty$ the $\Delta_2$-constant of $\phi_a$ is uniformly
  bounded with respect to~$a$ and it therefore suffices to consider
  the case $c_2=1$.  Using ~\ref{ass:intop} we gain
  \begin{align*}
    \dashint_K \phi_a\big(\cdot,\abs{h_K^j \nabla^j \Pi_h
      \bfv}\big)\,dx &\leq c \dashint_K \phi_a\bigg(\cdot,
    \dashint_K h_K^j \abs{\nabla^j \Pi_h \bfv(y)}\,dy \bigg) \,dx
    \\
    &\leq c \dashint_K \phi_a\bigg(\cdot, \sum_{k=0}^l
    h_K^{k} \dashint_{S_K} \abs{\nabla^k \bfv(y)}\,dy \bigg) \,dx.
  \end{align*}
  Now with Theorem \ref{thm:jensenpxshift} as well as the convexity
  and $\Delta_2$-condition of $\varphi_a$ we find
  \begin{align*}
    \dashint_K \phi_a\bigg(\cdot,\dashint_{S_K}h_K^j \abs{\nabla^j
      \Pi_h\bfv}\,dy \bigg)\,dx &\leq c
    \sum_{k=0}^l\dashint_K \phi_a\bigg(\cdot, h_K^{k}
    \dashint_{S_K} \abs{\nabla^k \bfv(y)}\,dy \bigg) \,dx\\
    &\leq c \sum_{k=0}^l \dashint_K\dashint_{S_K}
    \phi_a\big(\cdot,h_K^j \abs{\nabla^j \bfv}
    \big)\,dy\,dx +ch_K^m\\
    &\leq c\sum_{k=0}^l \dashint_{S_K} \phi_a\big(\cdot,h_K^j
    \abs{\nabla^j \bfv}\,dy \big)\,dx +ch_K^m.
  \end{align*}
  This proves the theorem.
\end{proof}

\begin{lemma}[\bf Approximability]
  \label{thm:oappr}
  Let $\Pi_h$ and $l$ satisfy Assumption~\ref{ass:intop} with $l \leq
  r_0 +1$ and let $p\in\PPln(\Omega)$ with $p^+<\infty$ and let $m>0$.
  Then for all $j\leq l$, $K \in \mathcal{T}_h$ and $\bfv \in
  (W^{l,p(\cdot)})^N$
  with
  \begin{align*}
    \dashint_{S_K} h_K^l \abs{\nabla^l \bfv}\,dy \leq c_3 \max \set{1,
      \abs{K}^{-m}}.
  \end{align*}
  holds
  \begin{align}
    \label{eq:oappr1}
    \dashint_K \phi_a\big(\cdot, h_K^{j}|\nabla^j (\bfv - \Pi_h \bfv)
    |\big)\,dx &\leq c\,\dashint_{S_K} \phi_a\big(\cdot,h_K^{l} \big|
    \nabla^l \bfv \big|\big)\,dx+ch_K^m,
  \end{align}
  where $c = c(k,l,m, c_{\log},p^+,c_3)$.
\end{lemma}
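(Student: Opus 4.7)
The plan is to run the classical Bramble--Hilbert argument in the variable-exponent setting: subtract a polynomial that annihilates the lower-order derivatives, apply the stability estimate of Lemma~\ref{thm:stabo}, and then chain the lower-order terms up to the $l$-th order by iterating the shifted \Poincare{} inequality of Theorem~\ref{thm:poincareshift}.

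First, since $p^+<\infty$ the $\Delta_2$-constant of $\phi_a$ is uniform in~$a$, so, exactly as in the proof of Lemma~\ref{thm:stabo}, it suffices to treat $c_3=1$. Because $l-1 \le r_0$, polynomials of degree at most $l-1$ lie in $\mathcal{P}_{r_0}$ and are hence fixed by $\Pi_h$ via \eqref{eq:proj}. I would choose $\bfq \in (\mathcal{P}_{l-1}(S_K))^N$ to be the averaged Taylor polynomial of $\bfv$ on the (connected, roughly-star-shaped) patch $S_K$, so that $\mean{\nabla^k(\bfv-\bfq)}_{S_K}=0$ for $k=0,\dots,l-1$ and $\nabla^l \bfq = 0$. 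Then the decomposition
\[
\bfv - \Pi_h\bfv \;=\; (\bfv - \bfq) - \Pi_h(\bfv-\bfq)
\]
is the starting point.

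Iterating the classical $L^1$-\Poincare{} inequality on $S_K$ (using the mean-zero property of $\nabla^k(\bfv-\bfq)$ for $k<l$) yields
\[
h_K^k \dashint_{S_K} |\nabla^k(\bfv - \bfq)|\,dy \;\le\; c\, h_K^l \dashint_{S_K} |\nabla^l \bfv|\,dy \;\le\; c\, \max\{1,|K|^{-m}\}
\]
for $k=0,\dots,l$. This verifies the hypothesis of the stability Lemma~\ref{thm:stabo} for $\bfv-\bfq$; applying it to $\Pi_h(\bfv-\bfq)$ and combining with the trivial bound for $\nabla^j(\bfv-\bfq)$ on $K\subset S_K$ (using convexity and $\Delta_2$ of $\phi_a$) gives
\[
\dashint_K \phi_a\bigl(\cdot, h_K^j |\nabla^j(\bfv - \Pi_h\bfv)|\bigr)\,dx \;\le\; c \sum_{k=0}^l \dashint_{S_K} \phi_a\bigl(\cdot, h_K^k |\nabla^k(\bfv - \bfq)|\bigr)\,dx + c\,h_K^m.
\]

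The remaining task is to collapse this sum to the single $k=l$ term. For each $k\in\{j,\dots,l-1\}$ I would apply Theorem~\ref{thm:poincareshift} to the function $u := h_K^{k+1}\nabla^k(\bfv-\bfq)$ on (a ball comparable to) $S_K$. Since $\mean{u}_{S_K}=0$, $|u|/\ell(S_K) \sim h_K^k|\nabla^k(\bfv-\bfq)|$, $|\nabla u| = h_K^{k+1}|\nabla^{k+1}(\bfv-\bfq)|$, and $|S_K|\sim |K|$, Theorem~\ref{thm:poincareshift} yields
\[
\dashint_{S_K} \phi_a\bigl(\cdot, h_K^{k}|\nabla^{k}(\bfv-\bfq)|\bigr)\,dx \;\le\; c\, \dashint_{S_K} \phi_a\bigl(\cdot, h_K^{k+1}|\nabla^{k+1}(\bfv-\bfq)|\bigr)\,dx + c\,h_K^m,
\]
whose hypothesis is supplied by the iterated classical \Poincare{} bound already established. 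Iterating from $k=j$ up to $k=l-1$ and using $\nabla^l(\bfv-\bfq) = \nabla^l\bfv$ closes the argument.

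The main technical point, as in Lemma~\ref{thm:stabo}, is the bookkeeping around the hypotheses of Theorems~\ref{thm:jensenpxshift} and~\ref{thm:poincareshift}: at each step one must check that the $l$-th order bound on $\bfv$ descends consistently to an $L^1$-bound on the intermediate-order gradients of $\bfv-\bfq$. Once the mean-zero polynomial $\bfq$ is in place this is immediate from iterated classical \Poincare, which is why singling out this $\bfq$ is the one non-routine structural move in the proof.
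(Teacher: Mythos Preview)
Your proposal is correct and follows essentially the same route as the paper: subtract a polynomial $\frp\in(\mathcal{P}_{l-1})^N$ with $\mean{\nabla^k(\bfv-\frp)}_{S_K}=0$ for $k<l$, use $\Pi_h\frp=\frp$ and the stability Lemma~\ref{thm:stabo} on $\bfv-\frp$, then iterate the shifted \Poincare{} inequality of Theorem~\ref{thm:poincareshift} to collapse the resulting sum to the top-order term. The only cosmetic difference is that you name the polynomial an averaged Taylor polynomial while the paper simply fixes $\frp$ by the mean-zero conditions; the remaining caveat that Theorem~\ref{thm:poincareshift} is literally stated on cubes/balls rather than on the patch $S_K$ is a standard adaptation that the paper also leaves implicit.
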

\begin{proof}
  We fix $\frp \in (\mathcal{P}_{l-1})^N$ such that $\int_{S_K}
  \nabla^k (\bfv - \frp)\,dx = 0$ for all $k=0,\dots,l-1$. Then by
  \Poincare{}
  \begin{align}
    \label{eq:kleingenug}
    \max_{k=0,\dots,l} \dashint_{S_K} h_K^k \abs{\nabla^k(\bfv -
      \bfp)}\,dx \le c\, \dashint_{S_K} h_K^l \abs{\nabla^l \bfv}\,dx
    \le c \max \set{1, \abs{K}^{-m}}.
  \end{align}
  Due to~\eqref{eq:proj} and $l-1 \le r_0$ we have $\Pi_h \frp =
  \frp$.  From this, the convexity of~$\phi_a$, and
  $\Delta_2(\phi_a)<\infty$ (uniformly with respect to~$a$) we obtain
  for all $0 \leq j \leq l$
  \begin{align*}
    \lefteqn{I := \dashint_K \phi_a\big(\cdot, h_K^{j}|\nabla^j (\bfv -
        \Pi_h \bfv)| \big)\,dx} \hspace{5mm} &
    \\
    &\leq c\, \dashint_K \phi_a\big(\cdot, h_K^{j}|\nabla^j (\bfv -
      \frp)| \big)\,dx + c\, \dashint_K  \phi_a\big(\cdot, h_K^{j}
    |\nabla^j (\Pi_h (\bfv-\frp))| \big)\,dx.
  \end{align*}
  Now, we use Lemma~\ref{thm:stabo} for the functions $(\bfv -
  \frp)$ to obtain
  \begin{align*}
    I &\leq c\,\dashint_K \phi_a\big(\cdot, h_K^{j}| \nabla^j (\bfv -
    \frp)| \big)\,dx+c\sum_{k=0}^l \dashint_{S_K} \phi_a\big(\cdot,
    h_K^{k} \abs{\nabla^k( \bfv-\frp)} \big) \,dx+ch_K^m
    \\
    &\leq c\sum_{k=0}^l \dashint_{S_K} \phi_a\big(\cdot, h_K^{k}
    \abs{\nabla^k( \bfv-\frp)} \big) \,dx+ch_K^m
  \end{align*}
  Due to~\eqref{eq:kleingenug} we can apply repeatedly the
  Poincar{\'e}-type inequality from Theorem~\ref{thm:poincareshift} and gain
  \begin{align*}
    \dashint_K \phi_a\big(\cdot, h_K^{j}|\nabla^j (\bfv - \Pi_h \bfv)|
    \big)\,dx\leq c\dashint_{S_K} \phi_a\big(\cdot,
    h_K^l\abs{\nabla^l\bfv} \big) \,dx+ch_K^m,
  \end{align*}
  where we also use the uniform $\Delta_2$-constants of~$\phi_a$ with
  respect to~$a$.
\end{proof}

\begin{corollary}[\bf Continuity]
  \label{cor:ocont}
  Under the assumptions of Lemma~\ref{thm:oappr} holds
  \begin{align}
    \label{eq:ocont1}
    \dashint_K \phi_a\big(\cdot,h_K^l|\nabla^l \Pi_h \bfv|\big)\,dx
    &\leq c \dashint_{S_K} \phi_a\big(\cdot,h_K^l|\nabla^l
    \bfv|\big)\,dx+ch_K^m,
  \end{align}
  with $c=c = c(l,m, c_{\log},p^+,\gamma_0,c_3)$.
\end{corollary}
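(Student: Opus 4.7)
The plan is to derive the continuity estimate as a direct consequence of the approximability result, Lemma~\ref{thm:oappr}, combined with the triangle inequality in the form allowed by the uniform $\Delta_2$-condition of the shifted N-functions $\phi_a$.

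First I would write $\nabla^l \Pi_h \bfv = \nabla^l \bfv + \nabla^l(\Pi_h \bfv - \bfv)$ pointwise on $K$, and use convexity together with $\Delta_2(\set{\phi_a}_{a\geq 0})<\infty$ (which, as recalled in Remark~\ref{rem:shift_in_out} and used throughout the previous proofs, is uniform in $a$ since $p^+<\infty$) to obtain
\begin{align*}
\phi_a\big(\cdot, h_K^l |\nabla^l \Pi_h \bfv|\big)
&\leq c\,\phi_a\big(\cdot, h_K^l |\nabla^l \bfv|\big)
+ c\,\phi_a\big(\cdot, h_K^l |\nabla^l (\bfv - \Pi_h \bfv)|\big).
\end{align*}
Averaging over $K$ and applying Lemma~\ref{thm:oappr} with $j=l$ to the second term yields
\begin{align*}
\dashint_K \phi_a\big(\cdot, h_K^l |\nabla^l \Pi_h \bfv|\big)\,dx
&\leq c\,\dashint_K \phi_a\big(\cdot, h_K^l |\nabla^l \bfv|\big)\,dx
+ c\,\dashint_{S_K} \phi_a\big(\cdot, h_K^l |\nabla^l \bfv|\big)\,dx + c\, h_K^m.
\end{align*}
Since $K \subset S_K$ and $|S_K| \sim |K|$ by the non-degeneracy property~\ref{mesh:SK}, the first integral on the right is bounded by a constant times the second, and the desired estimate~\eqref{eq:ocont1} follows.

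There is no real obstacle here; the whole point of the corollary is to package Lemma~\ref{thm:oappr} into a stability-type bound on $\Pi_h$ at the highest derivative level $l$, and the only subtlety is that $\phi_a$ is not subadditive but only quasi-subadditive via $\Delta_2$, which is precisely what justifies the splitting above with constants independent of $a$.
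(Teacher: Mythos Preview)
Your proof is correct and is exactly the argument the paper has in mind: the paper's own proof just says ``follows directly from~\eqref{eq:oappr1}, $j=l$, and the triangle inequality,'' and what you have written is precisely the spelled-out version of that, with the quasi-subadditivity via $\Delta_2$ playing the role of the triangle inequality and the step $K\subset S_K$, $|K|\sim|S_K|$ absorbing the $K$-average into the $S_K$-average.
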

\begin{proof}
  The assertion follows directly from~\eqref{eq:oappr1}, $j=l$, and
  the triangle inequality.
\end{proof}

\section{Convergence}
\label{sec:conv}
\noindent
Let us now apply the previous results to a finite element
approximation of the system \eqref{eq:app_sys}
considered on a bounded Lipschitz domain $\Omega$ and equipped with
zero Dirichlet boundary conditions, where $p\in \PPln (\Omega)$. Due
to these assumptions it is easy to show, with the help of the theory
of monotone operators, that there exists a solution $\bfv \in (W^{1,
  \px}_{0}(\Omega))^N$ such that for all $\bfpsi \in
(W^{1,\px}_{0}(\Omega))^N $ holds
\begin{align}
  \label{eq:weak}
  \int_\Omega \bfA (\cdot,\nabla \bfv) \cdot\nabla\bfpsi\, dx =
  \int_\Omega \bff \cdot \bfpsi\, dx,
\end{align}
where $\bfA(x,\bfxi):=(\kappa+|\bfxi|)^{p(x)-2}\bfxi$.

Let $V_h$ be given as in \eqref{def:Vh} with $r_0 \ge 1$ such that
$V_h \subseteq  W^{1,\px}(\Omega)$. Denote
\begin{align*}
  \bfV_{h,0}:= \big (V_h\cap
  W^{1,\px}_{0}(\Omega)\big )^N .
\end{align*}
Thus we now restrict ourselves to conforming finite element spaces
containing at least linear polynomials.  The finite
element approximation of \eqref{eq:weak} reads as follows: find $\bfv_h
\in \bfV_{h,0}$  such that for all $\bfpsi _h \in \bfV_{h,0} $ holds 
\begin{align}
  \label{eq:weak_h}
  \int_\Omega \bfA (\cdot,\nabla\bfv_h) \cdot \nabla\bfpsi_h\, dx =
  \int_\Omega \bff \cdot \bfpsi_h\, dx.
\end{align}
Again it is easy to see that this problem has a unique solution. 

In the following we define
\begin{align*}
  \bfF(x,\bfxi)&:=(\kappa+|\bfxi|)^{\frac{p(x)-2}{2}}\bfxi\text{ for }\kappa\in[0,1]
\end{align*}
We begin with a best approximation result.
\begin{lemma}[best approximation]
  \label{lem:bestappr}
  Let $p\in\PPln (\Omega)$ with $p^{-}>1$ and $p^+<\infty$. Let $\bfv
  \in (W^{1, \px}_0(\Omega))^N$ and $\bfv_h \in \bfV_{h,0}$ be
  solutions of \eqref{eq:weak} and \eqref{eq:weak_h} respectively.
  Then there exists $c>0$ depending only on $c_{\log}$, $p^-$, $p^+$
  and $\gamma_0$ such that
  \begin{align}
    \label{eq:2}
    \norm{\bfF(\cdot,\nabla\bfv) - \bfF(\cdot,\nabla\bfv_h)}_2 \le c
    \, \min _{\bfpsi_h \in \bfV_{h,0}} \norm{\bfF(\cdot,\nabla \bfv) -
      \bfF(\cdot,\nabla \bfpsi_h)}_2 \,.
  \end{align}
\end{lemma}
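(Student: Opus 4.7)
The plan is to run the standard C\'ea-type argument from the constant-exponent case (as in DieR07), observing that the key pointwise inequalities are genuinely pointwise and hence carry over to the variable exponent setting with no need for the key estimate or any averaging argument — these are reserved for the interpolation theory already developed.

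\textbf{Step 1 (pointwise equivalences).} For each fixed $x\in\Omega$, the exponent $p(x)$ is just a number in $[p^-,p^+]\subset(1,\infty)$, so the classical pointwise monotonicity and Lipschitz estimates for the $p$-Laplacian nonlinearity apply. Writing $\phi_{|\bfxi|}(x,\cdot)$ for the shifted N-function of $t\mapsto t^{p(x)}$, one has uniformly in $x,\bfxi,\bfeta$
\begin{align*}
  |\bfF(x,\bfxi)-\bfF(x,\bfeta)|^2 \sim \bigl(\bfA(x,\bfxi)-\bfA(x,\bfeta)\bigr)\cdot(\bfxi-\bfeta) \sim \phi_{|\bfxi|}\bigl(x,|\bfxi-\bfeta|\bigr),
\end{align*}
with constants depending only on $p^-$ and $p^+$.

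\textbf{Step 2 (Galerkin orthogonality).} Subtracting \eqref{eq:weak_h} from \eqref{eq:weak} and testing with $\bfpsi_h-\bfv_h\in\bfV_{h,0}$ gives, for any $\bfpsi_h\in\bfV_{h,0}$,
\begin{align*}
  \int_\Omega \bigl(\bfA(\cdot,\nabla\bfv)-\bfA(\cdot,\nabla\bfv_h)\bigr)\cdot\bigl(\nabla\bfv_h-\nabla\bfpsi_h\bigr)\,dx=0.
\end{align*}
Combined with Step 1, this yields
\begin{align*}
  \|\bfF(\cdot,\nabla\bfv)-\bfF(\cdot,\nabla\bfv_h)\|_2^2
  \le c\int_\Omega\bigl(\bfA(\cdot,\nabla\bfv)-\bfA(\cdot,\nabla\bfv_h)\bigr)\cdot\bigl(\nabla\bfv-\nabla\bfpsi_h\bigr)\,dx.
\end{align*}

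\textbf{Step 3 (shifted Young to absorb).} The next ingredient is a pointwise shifted Young inequality: for every $\epsilon>0$,
\begin{align*}
  \bigl(\bfA(x,\bfxi)-\bfA(x,\bfeta)\bigr)\cdot(\bfxi-\bfzeta)\le\epsilon\,\phi_{|\bfxi|}\bigl(x,|\bfxi-\bfeta|\bigr)+c_\epsilon\,\phi_{|\bfxi|}\bigl(x,|\bfxi-\bfzeta|\bigr),
\end{align*}
which follows (pointwise in $x$) from $|\bfA(x,\bfxi)-\bfA(x,\bfeta)|\sim (\phi_{|\bfxi|})'(x,|\bfxi-\bfeta|)$ and the fact that $\phi_{|\bfxi|}(x,\cdot)$ and its conjugate form a complementary pair with $\Delta_2$-constants uniform in $x$ and in the shift. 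Applying this with $\bfxi=\nabla\bfv$, $\bfeta=\nabla\bfv_h$, $\bfzeta=\nabla\bfpsi_h$, integrating, and invoking the Step~1 equivalence in both directions produces
\begin{align*}
  \|\bfF(\cdot,\nabla\bfv)-\bfF(\cdot,\nabla\bfv_h)\|_2^2
  \le C\epsilon\,\|\bfF(\cdot,\nabla\bfv)-\bfF(\cdot,\nabla\bfv_h)\|_2^2
  +c_\epsilon\,\|\bfF(\cdot,\nabla\bfv)-\bfF(\cdot,\nabla\bfpsi_h)\|_2^2.
\end{align*}
Choosing $\epsilon$ small enough to absorb the first term on the right and finally taking the infimum over $\bfpsi_h\in\bfV_{h,0}$ yields \eqref{eq:2}.

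\textbf{Main obstacle.} The only delicate point is verifying that the shifted Young inequality in Step~3 is valid with constants \emph{independent} of $x$ and of the base shift $|\bfxi|$; but this is precisely the content of the shift-change lemmas available for the family $\{\phi_a(x,\cdot)\}_{a\ge 0}$, whose characteristics and $\Delta_2$-constants are uniform in both $a\ge0$ and $x\in\Omega$ as long as $p^-,p^+$ are controlled (cf.\ the discussion after \eqref{eq:jensenpx2}). Beyond that the proof is a direct pointwise transcription of the constant-exponent argument.
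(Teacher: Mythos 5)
Your argument is correct and is essentially the paper's own proof: Galerkin orthogonality, the pointwise equivalences of Lemma~\ref{lem:hammer}, Young's inequality for the shifted N-function $\phi_{\abs{\nabla\bfv}+\kappa}(x,\cdot)$ (with constants uniform in $x$ and the shift since they depend only on $p^-,p^+$), absorption, and taking the infimum. The only cosmetic difference is that the paper shifts by $\abs{\nabla\bfv}+\kappa$ rather than $\abs{\nabla\bfv}$, which changes nothing.
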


\begin{proof}
  We use Lemma \ref{lem:hammer} and the equations
  \eqref{eq:weak} and \eqref{eq:weak_h} to find 
  \begin{align*}
    %\int_\Omega \!& (\kappa+|\nabla\bfv|)^{\px-2} \abs{\nabla
      %\bfv-\nabla \bfv_h}^2\,dx\\ 
%     \int_\Omega \!&(\kappa+|\nabla\bfv|+\abs{\nabla
%       \bfv-\nabla\bfv_h})^{\px-2} \abs{\nabla
%       \bfv-\nabla\bfv_h}^2\,dx
\int_\Omega &\abs{\bfF(\cdot,\nabla\bfv) - \bfF(\cdot,\nabla\bfv_h)}^2\, dx\\
   &= c\, \int_\Omega\! \big (\bfA (\cdot,\nabla\bfv) \!-\! \bfA
    (\cdot,\nabla\bfv_h)\big ): \big (\nabla \bfv \!-\!\nabla\bfv_h
    \big )\, dx
    \\
    &= c\, \int_\Omega \big (\bfA (\cdot,\nabla\bfv) \!-\! \bfA
    (\cdot,\nabla \bfv_h)\big ): \big (\nabla\bfv \!-\!\nabla\bfpsi_h
    \big )\, dx
    \\
    &\leq c\, \int_\Omega 
(\phi_{\abs{\nabla \bfv}+\kappa})'(x,\abs{\nabla\bfv-\nabla\bfv_h})
    \abs{\nabla\bfv -\nabla\bfpsi_h}\, dx.
  \end{align*}
for all $\bfpsi _h \in \bfV_{h,0}$.
  Now, Young's inequality, see Lemma~\ref{lem:young}, for
  $\phi_{\abs{\nabla \bfv}+\kappa}(x,\cdot)$ implies for $\tau>0$
  \begin{align*}
    \int_\Omega &\abs{\bfF(\cdot,\nabla\bfv) - \bfF(\cdot,\nabla\bfv_h)}^2\, dx
%\int_\Omega &(\kappa+|\nabla\bfv|+\abs{\nabla
 %     \bfv-\nabla\bfv_h})^{\px-2} \abs{\nabla \bfv-\nabla\bfv_h} \,
  %  \abs{\nabla\bfv -\nabla\bfpsi_h}\,
\\
    \le &\tau\int_\Omega (\phi_{\abs{\nabla \bfv}+\kappa})(x,\abs{\nabla\bfv-\nabla\bfv_h})\, dx
%(\kappa+|\nabla\bfv|+\abs{\nabla
 %     \bfv-\nabla\bfv_h})^{\px-2} \abs{\nabla \bfv-\nabla\bfv_h}^2 \,
\\&\quad +c(\tau)\int_\Omega  (\phi_{\abs{\nabla \bfv}+\kappa})(x,\abs{\nabla\bfv-\nabla\bfpsi_h})\, dx.
\\
\sim &\,\tau\int_\Omega \abs{\bfF(\cdot,\nabla\bfv) - \bfF(\cdot,\nabla\bfv_h)}^2\, dx
  \\
&\quad +c(\tau)\int_\Omega\abs{\bfF(\cdot,\nabla\bfv) - \bfF(\cdot,\nabla\bfpsi_h)}^2\, dx,
%(\kappa+|\nabla\bfv|+\abs{\nabla\bfv
 %     -\nabla\bfpsi_h})^{\px-2} \, \abs{\nabla\bfv
  %    -\nabla\bfpsi_h}^2\, 
  \end{align*}
where we used Lemma~\ref{lem:hammer} in the last step. Hence, for $\tau$ small enough we can absorb the $\tau$-integral in the l.h.s. This implies
%   \begin{align*}
%     \int_\Omega &(\kappa+|\nabla\bfv|+\abs{\nabla
%       \bfv-\nabla\bfv_h})^{\px-2} \abs{\nabla \bfv-\nabla\bfv_h}^2 \,
%     dx\\&\leq c\int_\Omega (\kappa+|\nabla\bfv|+\abs{\nabla\bfv
%       -\nabla\bfpsi_h})^{\px-2} \, \abs{\nabla\bfv -\nabla\bfpsi_h}^2\,
%     dx.
%   \end{align*}
%   Due to Lemma \ref{lem:hammer} this is equivalent to
  \begin{align*}
    \norm{\bfF(\cdot,\nabla\bfv) - \bfF(\cdot,\nabla\bfv_h)}_2^2 &\le
    c \, \norm{\bfF(\cdot,\nabla\bfv) -
      \bfF(\cdot,\nabla\bfpsi_h)}_2^2 .
  \end{align*}
  Taking the minimum over all $\bfpsi _h \in \bfV_{h,0}$ proves the
  claim.
\end{proof}
\begin{lemma}
  \label{lem:app_V1}
  Let $p\in\PPln (\Omega)$ with $p^{-}>1$ and $p^+<\infty$ and let
  $m>0$.  Let $\Pi_h$ satisfy Assumption~\ref{ass:intop} with $l=1$
  and $\Pi_h \frq = \frq$ for all $\frq \in
  (\mathcal{P}_1)^N(\Omega)$. (This is e.g.~satisfied if $r_0 \geq
  1$.) Let $\bfv \in (W^{1,\px}(\Omega))^N$ then for all $K \in
  \mathcal{T}_h$ with $h_K \le 1$ and all $\bfQ \in \setR^{N \times
    n}$  with
  \begin{align*}
    \abs{\bfQ} + \dashint_{S_K} \abs{\nabla \bfv}\,dx \le c_4\, \max
    \set{1, \abs{K}^{-m}},
  \end{align*}
  it holds that
  \begin{align}
    \label{eq:app_V1}\begin{aligned}
      \dashint_K \bigabs{\bfF (\cdot,\nabla\bfv) - \bfF
        (\cdot,\nabla\Pi_h \bfv)}^2 \,dx
      \leq c\,  \dashint_{S_K}
      \bigabs{\bfF(\cdot,\nabla \bfv) - \bfF(\cdot,\bfQ)}^2
      \,dx+ch_K^m,
    \end{aligned}
  \end{align}
  with $c=c(c_{\log},p^-,p^+,m,\gamma_0,c_4)$.
\end{lemma}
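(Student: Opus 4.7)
The plan is to reduce the claimed $\bfF$-estimate to the Orlicz-type interpolation result of Lemma~\ref{thm:oappr}. The starting point is the well-known equivalence
\begin{align*}
  \abs{\bfF(x,\bfxi_1) - \bfF(x,\bfxi_2)}^2 \sim \phi_{\kappa + \abs{\bfxi_1} + \abs{\bfxi_2}}\big(x, \abs{\bfxi_1 - \bfxi_2}\big),
\end{align*}
supplied by Lemma~\ref{lem:hammer}. To take advantage of the polynomial-invariance of $\Pi_h$, I set $\bfw(x) := \bfv(x) - \bfQ\,x$, so that $\nabla\bfw = \nabla\bfv - \bfQ$. The hypothesis that $\Pi_h$ reproduces linear vector fields yields $\Pi_h(\bfQ\,x) = \bfQ\,x$, and hence the identity $\nabla\bfv - \nabla\Pi_h\bfv = \nabla\bfw - \nabla\Pi_h\bfw$.

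A standard shift-change inequality (of the type already invoked in the proof of Theorem~\ref{thm:jensenpxshift}) then exchanges the variable shift $\kappa + \abs{\nabla\bfv(x)}$ produced by the hammer estimate for the constant shift $a := \kappa + \abs{\bfQ}$, giving pointwise on $K$
\begin{align*}
  \abs{\bfF(\cdot,\nabla\bfv) - \bfF(\cdot,\nabla\Pi_h\bfv)}^2 \le c\,\abs{\bfF(\cdot,\nabla\bfv) - \bfF(\cdot,\bfQ)}^2 + c\,\phi_a\big(\cdot,\abs{\nabla\bfw - \nabla\Pi_h\bfw}\big).
\end{align*}
Integrating over $K$ and using $K\subset S_K$ with $\abs{S_K}\sim\abs{K}$, the first summand is bounded by $c\dashint_{S_K}\abs{\bfF(\cdot,\nabla\bfv)-\bfF(\cdot,\bfQ)}^2\,dx$ and is thus already of the form appearing on the right of~\eqref{eq:app_V1}.

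For the second summand I apply Lemma~\ref{thm:oappr} with $l=j=1$ and constant shift $a$. Since the target estimate carries no $h_K$-weights, I first rescale and apply that lemma to $\bfw/h_K$: by linearity of $\Pi_h$ the scalar passes through, and the required smallness hypothesis $\dashint_{S_K} \abs{\nabla\bfw}\,dx \le 2c_4\max\set{1,\abs{K}^{-m}}$ follows immediately from the assumption on $\bfQ$ together with $h_K\le 1$. This yields
\begin{align*}
  \dashint_K \phi_a\big(\cdot,\abs{\nabla\bfw - \nabla\Pi_h\bfw}\big)\,dx \le c \dashint_{S_K}\phi_a\big(\cdot,\abs{\nabla\bfw}\big)\,dx + c\,h_K^m,
\end{align*}
and one final use of the hammer equivalence with the constant shift $a=\kappa+\abs{\bfQ}$ identifies the integrand on the right with $\abs{\bfF(\cdot,\nabla\bfv)-\bfF(\cdot,\bfQ)}^2$, producing~\eqref{eq:app_V1}.

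The main obstacle is bridging the variable shift $\kappa+\abs{\nabla\bfv(x)}$ intrinsic to $\bfF$ and the constant shift $a=\kappa+\abs{\bfQ}$ demanded by Lemma~\ref{thm:oappr}; the shift-change inequality is the tool that makes this transition possible. A secondary subtlety is stripping the dimensional $h_K$-factors from Lemma~\ref{thm:oappr}, which is handled cleanly by the rescaling $\bfw \mapsto \bfw/h_K$ made available by the linearity of $\Pi_h$.
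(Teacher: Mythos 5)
Your argument is correct and follows essentially the same route as the paper: the paper splits off $\bfF(\cdot,\nabla\bfv)-\bfF(\cdot,\bfQ)$ by a triangle inequality and then applies Lemma~\ref{lem:hammer} and Corollary~\ref{cor:ocont} to $\bfv-\frp$ with $\nabla\frp=\bfQ$ and shift $a=\abs{\bfQ}+\kappa$, which is exactly your change-of-shift plus Lemma~\ref{thm:oappr} applied to $\bfw=\bfv-\bfQ\,x$ in slightly different packaging. Your explicit rescaling $\bfw\mapsto\bfw/h_K$ to strip the $h_K$-weights is in fact a point the paper passes over silently, so your write-up is, if anything, the more careful one.
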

\begin{proof}
  Note that $\bfv \in (W^{1,\px}(\Omega))^N$ implies
  $\bfF(\cdot,\nabla\bfv) \in (L^2(\Omega))^{N\times n}$.  For
  arbitrary $\bfQ \in \setR^{N \times n}$ we have
  \begin{align*}
    \lefteqn{\dashint_K \abs{\bfF (\cdot,\nabla \bfv) -
        \bfF(\cdot,\nabla \Pi_h \bfv)}^2 \,dx} \hspace{5mm}
    \\
    &\leq c\, \dashint_K \abs{\bfF(\cdot,\nabla \bfv) -
      \bfF(\cdot,\bfQ)}^2 \,dx + c\, \dashint_K \abs{\bfF
      (\cdot,\nabla \Pi_h \bfv) - \bfF(\cdot,\bfQ)}^2 \,dx.
    \\
    &=: (I) + (II).
  \end{align*}
  Let $\frp \in (\mathcal{P}_1)^N(S_K)$ be such that $\nabla \frp =
  \bfQ$. Due to $\Pi_h \frp = \frp$ there holds $\bfQ = \nabla \frp =
  \nabla \Pi_h \frp$. We estimate by Lemma \ref{lem:hammer} as follows
  \begin{align*}
    (II) &\leq c\, \dashint_K
    \phi_{\abs{\bfQ}+\kappa}\big(\cdot,\abs{\nabla\Pi_h \bfv - \bfQ}
    \big)\,dx
    \\
    &= c\, \dashint_K \phi_{\abs{\bfQ}+\kappa}\big(\cdot,\abs{\nabla \Pi_h
      \bfv - \nabla \Pi_h \frp} \big) \,dx
    \\
    &= c\, \dashint_K \phi_{\abs{\bfQ}+\kappa}\big(\cdot,\abs{\nabla \Pi_h
      (\bfv -\frp) } \big)\,dx.
  \end{align*}
  We use the crucial Corollary~\ref{cor:ocont} for
  $\bfv -\frp$  and $a=\abs{\bfQ}+\kappa$ to obtain
  \begin{align*}
    (II) &\leq c\, \dashint_{S_K} \phi_{\abs{\bfQ}+\kappa}\big(\cdot,\abs{\nabla
      (\bfv -\frp) } \big)\,dx +ch^2_K= c\, \dashint_{S_K}
    \phi_{\abs{\bfQ}+\kappa}\big(\cdot,\abs{\nabla \bfv - \bfQ} \big)\,dx+ch_K^m.
  \end{align*}
  Now, with Lemma \ref{lem:hammer}
  \begin{align*}
    (II) &\leq c\, \dashint_{S_K} \abs{\bfF(\cdot,\nabla \bfv) -
      \bfF(\cdot,\bfQ)}^2 \,dx+ch_K^m.
  \end{align*}
  Since, $\abs{K} \sim \abs{S_K}$ and $K \subset S_K$ we also have
  \begin{align*}
    (I) &\leq c\, \dashint_{S_K} \abs{\bfF(\cdot,\nabla \bfv) -
      \bfF(\cdot,\bfQ)}^2 \,dx.
  \end{align*}
  Overall,
  \begin{align*}
    \dashint_K \bigabs{\bfF (\cdot,\nabla \bfv) -
      \bfF (\cdot,\nabla \Pi_h \bfv)}^2 \,dx &\leq c\,
    \dashint_{S_K} \bigabs{\bfF(\cdot,\nabla \bfv) -
      \bfF(\cdot,\bfQ)}^2 \,dx+ch_K^m.
  \end{align*}
  Taking the infimum over all $\bfQ$ proves~\eqref{eq:app_V1}. 
\end{proof}
\begin{lemma}
  \label{lem:app_V2}
  Let $p\in C^{0,\alpha}(\overline{\Omega})$ with $p^{-}>1$ and let
  $m>0$.  Let $\Pi_h$ satisfy Assumption~\ref{ass:intop} with $l=1$
  and $\Pi_h \frq = \frq$ for all $\frq \in
  (\mathcal{P}_1)^N(\Omega)$. (This is e.g.~satisfied if $r_0 \geq
  1$.) Let $\bfv \in (W^{1,\px}(\Omega))^N$ then for all $K \in
  \mathcal{T}_h$ with $h_K \le 1$ with
  \begin{align*}
    \dashint_{S_K} \abs{\nabla \bfv}\,dx \le c_5\, \max \set{1,
      \abs{K}^{-m}},
  \end{align*}
  % Under the assumptions of Lemma~\ref{lem:app_V1} and
  % $\bfF(\nabla \bfv) \in (W^{1,2}(\Omega))^{N\times
  %   n}$ and $p\in C^{0,\alpha}(\overline{\Omega})$ we have
  it holds that
  \begin{align}
     \dashint_K \bigabs{\bfF (\cdot,\nabla\bfv) - \bfF
        (\cdot,\nabla\Pi_h \bfv)}^2 \,dx
% \inf_{\bfQ \in \setR^{N \times n}} \dashint_{S_K}
%     \bigabs{\bfF(\cdot,\nabla \bfv) - \bfF(\cdot,\bfQ)}^2 \,dx 
&\leq
    c
    h_K^{2\alpha}\bigg(\dashint_{S_K}\ln(\kappa+|\nabla\bfv|)^2
    (\kappa+|\nabla\bfv|)^{\px}\,dx+1\bigg)\nonumber
    \\
    &+c\, h_K^{2}\bigg(\, \dashint_{S_K} \bigabs{\nabla
      \bfF(\cdot,\nabla\bfv)}^2\,dx\bigg) \label{eq:app_V2}
  \end{align}
  with
 $c=c(c_{\log},\norm{p}_{C^{0,\alpha}(\overline{\Omega})},\alpha,p^-,m,\gamma_0,c_5)$.
\end{lemma}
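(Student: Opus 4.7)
My plan is to combine Lemma~\ref{lem:app_V1} (with a specifically chosen $\bfQ$) with a \emph{freezing of the exponent} on $S_K$ and a suitable \Poincare{} inequality. Fix any $x_0 \in S_K$, set $\bar p := p(x_0)$, and introduce the constant-exponent flux
\begin{align*}
\bar\bfF(\bfxi) := (\kappa + |\bfxi|)^{(\bar p - 2)/2} \bfxi,
\end{align*}
which is a homeomorphism of $\setR^{N\times n}$. Choose $\bfQ$ uniquely so that $\bar\bfF(\bfQ) = \mean{\bar\bfF(\nabla \bfv)}_{S_K}$. Applying Lemma~\ref{lem:app_V1} reduces the task to bounding $\dashint_{S_K}|\bfF(\cdot,\nabla\bfv)-\bfF(\cdot,\bfQ)|^2\,dx$, the auxiliary $h_K^m$ being absorbed into the ``$+1$'' of \eqref{eq:app_V2} by choosing $m \ge 2\alpha$ from the outset.

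I then split by the triangle inequality,
\begin{align*}
|\bfF(\cdot,\nabla\bfv)-\bfF(\cdot,\bfQ)|^2 \le 3\bigl(E_1+E_2+E_3\bigr),
\end{align*}
with $E_1 := |\bfF(\cdot,\nabla\bfv)-\bar\bfF(\nabla\bfv)|^2$, $E_2 := |\bar\bfF(\nabla\bfv)-\bar\bfF(\bfQ)|^2$, and $E_3 := |\bar\bfF(\bfQ)-\bfF(\cdot,\bfQ)|^2$. The terms $E_1$ and $E_3$ are pure exponent-freezing errors. Applying the Mean Value Theorem to $q \mapsto (\kappa+t)^{(q-2)/2}\,t$ yields a factor $|p(x)-\bar p|\, \ln(\kappa+t)\,(\kappa+t)^{(\tilde q-2)/2}\,t$ with $\tilde q$ between $p(x)$ and $\bar p$. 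Combining $|p(x)-\bar p| \le \|p\|_{C^{0,\alpha}}\,h_K^\alpha$ with Lemma~\ref{lem:pxpy} (whose hypothesis is ensured by the size bound $\mean{|\nabla\bfv|}_{S_K} \le c_5\,|K|^{-m}$) permits replacement of $(\kappa+t)^{\tilde q}$ by $(\kappa+t)^{p(x)}$ on the ``moderate'' region of $S_K$, and the ``extreme'' region contributes at most $O(h_K^m)$ which is absorbed into the $+1$. This gives
\begin{align*}
\dashint_{S_K}(E_1+E_3)\,dx \lesssim h_K^{2\alpha}\bigg(\dashint_{S_K}\ln^2(\kappa+|\nabla\bfv|)(\kappa+|\nabla\bfv|)^{p(x)}\,dx+1\bigg),
\end{align*}
where for $E_3$ one uses $|\bar\bfF(\bfQ)| \le \mean{|\bar\bfF(\nabla\bfv)|}_{S_K}$ (Jensen) to reduce to the $\nabla\bfv$ bound.

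For $E_2$, the choice of $\bfQ$ identifies $\dashint_{S_K}E_2\,dx$ with the $L^2$-variance of $\bar\bfF(\nabla\bfv)$ on $S_K$, which equals $\tfrac12\dashint_{S_K}\dashint_{S_K}|\bar\bfF(\nabla\bfv)(x)-\bar\bfF(\nabla\bfv)(y)|^2\,dx\,dy$. Writing
\begin{align*}
\bar\bfF(\nabla\bfv)(x)-\bar\bfF(\nabla\bfv)(y) &= \bigl[\bfF(x,\nabla\bfv(x))-\bfF(y,\nabla\bfv(y))\bigr]\\
&\quad -\bigl[\bfF(x,\nabla\bfv(x))-\bar\bfF(\nabla\bfv(x))\bigr]+\bigl[\bfF(y,\nabla\bfv(y))-\bar\bfF(\nabla\bfv(y))\bigr],
\end{align*}
the first bracket is controlled by the classical \Poincare{} inequality applied to $\bfF(\cdot,\nabla\bfv)\in W^{1,2}$, contributing $c\,h_K^2\dashint_{S_K}|\nabla\bfF(\cdot,\nabla\bfv)|^2\,dx$; the remaining brackets are pointwise $E_1$-type errors, handled exactly as above.

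The main obstacle is that the obvious strategy of applying \Poincare{} directly to $\bar\bfF(\nabla\bfv)\in W^{1,2}$ and then transferring to $\nabla\bfF(\cdot,\nabla\bfv)$ via the chain rule would require the classical gradient of $p$, which does not exist since $p$ is only H\"older continuous. The double-integral formulation of \Poincare{} circumvents this: it only uses that $\bfF(\cdot,\nabla\bfv)$ itself belongs to $W^{1,2}$, which is exactly the regularity implicit in the right-hand side of \eqref{eq:app_V2}. Assembling the three estimates then yields \eqref{eq:app_V2}.
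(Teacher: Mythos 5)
Your overall architecture is sound and close to the paper's: a special choice of $\bfQ$ feeding into Lemma~\ref{lem:app_V1}, a Poincar\'e estimate for $\bfF(\cdot,\nabla\bfv)\in W^{1,2}$ producing the $h_K^2$ term, and an exponent-freezing error producing the $h_K^{2\alpha}$ term. But there is a genuine gap in how you commit the freezing error. Your terms $E_1$ (and the $E_1$-type brackets inside $E_2$) evaluate the exponent perturbation at the \emph{pointwise} values $\nabla\bfv(x)$: the mean value theorem leaves you with $(\kappa+|\nabla\bfv(x)|)^{\tilde q}$ where $\tilde q$ lies between $p(x)$ and $\bar p=p(x_0)$, and converting this to $(\kappa+|\nabla\bfv(x)|)^{p(x)}$ costs a factor $(\kappa+|\nabla\bfv(x)|)^{|p(x)-\bar p|}$. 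The hypothesis $\dashint_{S_K}\abs{\nabla\bfv}\,dx\le c_5\max\set{1,\abs{K}^{-m}}$ controls only an average, not pointwise values, so Lemma~\ref{lem:pxpy} is unavailable on the set where $\abs{\nabla\bfv}$ exceeds $\abs{S_K}^{-m}$ (indeed where it exceeds $e^{c/h_K^{\alpha}}$ the extra factor is unbounded), and your assertion that this ``extreme region contributes at most $O(h_K^m)$'' is unsubstantiated: the lemma's hypotheses give no control of $\int(\kappa+\abs{\nabla\bfv})^{p(x)+\epsilon}\,dx$ for $\epsilon>0$, and one can have the right-hand side of \eqref{eq:app_V2} finite while that integral diverges. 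This is precisely the trap the paper avoids: there the choice $\mean{\bfF(\cdot,\bfQ)}_{S_K}=\mean{\bfF(\cdot,\nabla\bfv)}_{S_K}$ means the two-term split is the variance of $\bfF(\cdot,\nabla\bfv)$ (pure Poincar\'e, no freezing) plus the oscillation in $x$ of $\bfF(x,\bfQ)$ at the single \emph{constant} $\bfQ$, whose size is quantitatively controlled by $\abs{\bfQ}^{p_{S_K}^-}\le c\,\dashint_{S_K}\abs{\bfF(\cdot,\nabla\bfv)}^2dx+c\le c\,\abs{S_K}^{-1}$, so Lemma~\ref{lem:pxpy} applies there.

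Your argument can be repaired, but it needs two additions. First, take $\bar p:=p_{S_K}^-$ rather than an arbitrary $p(x_0)$; then $\tilde q\le p(x)$, so for $\kappa+\abs{\nabla\bfv}\ge 1$ monotonicity gives $(\kappa+\abs{\nabla\bfv})^{\tilde q}\le(\kappa+\abs{\nabla\bfv})^{p(x)}$ with no appeal to Lemma~\ref{lem:pxpy}, while for $\kappa+\abs{\nabla\bfv}<1$ the quantity $\ln^2(\kappa+\abs{\nabla\bfv})\,(\kappa+\abs{\nabla\bfv})^{\tilde q}$ is bounded by a constant depending only on $p^-$ and is absorbed into the ``$+1$''. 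Second, you must actually verify the hypothesis of Lemma~\ref{lem:app_V1}, i.e.\ $\abs{\bfQ}\le c\max\set{1,\abs{K}^{-m}}$, for your $\bfQ$ defined by $\bar\bfF(\bfQ)=\mean{\bar\bfF(\nabla\bfv)}_{S_K}$; the Jensen remark you make does not yield this, since bounding $\mean{\abs{\bar\bfF(\nabla\bfv)}}_{S_K}$ runs into the same freezing problem unless you pass through the globally controlled quantity $\dashint_{S_K}\abs{\bfF(\cdot,\nabla\bfv)}^2\,dx\le c\,\abs{S_K}^{-1}$ as the paper does in its estimate of $\abs{\bfQ}$.
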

\begin{proof}
  In order to show \eqref{eq:app_V2} we need a special choice of
  $\bfQ$ in Lemma~\ref{lem:app_V1}.
  Precisely we want that
  \begin{align}\label{eq:Q}
    \dashint_{S_K}\bfF(\cdot,\bfQ)\,dx=\dashint_{S_K}\bfF(\cdot,\nabla\bfv)\,dx.
  \end{align}
  This is possible if the function 
  \begin{align*}
    \lambda(\bfQ):=\dashint_{S_K}\bfF(\cdot,\bfQ)\,dx=
    \dashint_{S_K}(\kappa+|\bfQ|)^{\frac{\px-2}{2}} \,dx\,\bfQ
  \end{align*}
  is surjective.  Obviously we can choose every direction in $\mathbb
  R^{N\times n}$ via the direction of $\bfQ$. On the other hand the
  modulus
  \begin{align*}
    |\lambda(\bfQ)|=\dashint_{S_K}(\kappa+|\bfQ|)^{\frac{\px-2}{2}}
    \abs{\bfQ}\,dx
  \end{align*}
  is an increasing continuous function of $|\bfQ|$ with
  $|\lambda(\bfzero)|\,=0$ and $|\lambda(\bfQ)|\rightarrow \infty$ for
  $|\bfQ|\rightarrow\infty$. Hence $\lambda:\mathbb R^{N\times
    n}\rightarrow\mathbb R^{N\times n}$ is surjective. Choosing $\bfQ$
  via \eqref{eq:Q} we have
  \begin{align*}
    \dashint_{S_K} &\bigabs{\bfF(\cdot,\nabla \bfv) -
      \bfF(\cdot,\bfQ)}^2 \,dx\\&\leq c \dashint_{S_K}
    \bigabs{\bfF(\cdot,\nabla \bfv) -
      \langle\bfF(\cdot,\nabla\bfv)\rangle_{S_K}}^2 \,dx+c
    \dashint_{S_K} \bigabs{\bfF(\cdot,\bfQ) -
      \langle\bfF(\cdot,\bfQ)\rangle_{S_K}}^2 \,dx.
  \end{align*}
  By Poincar{\'e}'s inequality we clearly gain
  \begin{align*}
    \dashint_{S_K} \bigabs{\bfF(\cdot,\nabla \bfv) -
      \langle\bfF(\cdot,\nabla\bfv)\rangle_{S_K}}^2 \,dx\leq
    \,ch_K^2\dashint_{S_K} \bigabs{\nabla\bfF(\cdot,\nabla \bfv) }^2
    \,dx
  \end{align*}
  We estimate
  \begin{align*}
    \dashint_{S_K} \bigabs{\bfF(\cdot,\bfQ) -
      \langle\bfF(\cdot,\bfQ)\rangle_{S_K}}^2 \,dx&\leq c
    \dashint_{S_K} \dashint_{S_K} \bigabs{\bfF(x,\bfQ) -
      \bfF(y,\bfQ)}^2 \,dx\,dy.
  \end{align*}
  For $x,y \in S_K$ we estimate
  \begin{align}
    \label{eq:diff_F}
    \lefteqn{\bigabs{\bfF(x,\bfQ) - \bfF(y,\bfQ)}} \qquad \nonumber &
    \\
    &\leq c\, \abs{p(x) - p(y)} \ln
    (\kappa\!+\!\abs{\bfQ})\Big(\big(\kappa \!+\!
    \abs{\bfQ}\big)^{\frac{p(x)-2}{2}} \!+\! \big(\kappa\!+\!
    \abs{\bfQ}\big)^{\frac{p(y)-2}{2}}\Big) \abs{\bfQ}.
  \end{align}
  This, $p \in C^{0,\alpha}(\overline{\Omega})$ and the previous
  estimate imply
  \begin{align}\label{eq:neu2}
    \dashint_{S_K} \bigabs{\bfF(\cdot,\bfQ) -
      \langle\bfF(\cdot,\bfQ)\rangle_{S_K}}^2 \,dx &\leq c\,
    h_K^{2\alpha}\bigg(\dashint_{S_K}\ln(\kappa+|\bfQ|)^2
    (\kappa+|\bfQ|)^{p(x)}\,dx + 1 \bigg).
  \end{align}
  In order to complete the proof of \eqref{eq:app_V2} we need to replace
  $\bfQ$ by $\nabla\bfv$. The choice of $\bfQ$ in \eqref{eq:Q} yields
  \begin{align}
    \abs{\bfQ}^{p_{S_K}^-}&\leq
    c\bigg(\biggabs{\dashint_{S_K}\bfF(\cdot,\bfQ)\,dx}^2+1\bigg)=
    c\bigg(\biggabs{\dashint_{S_K}\bfF(\cdot,\nabla\bfv)\,dx}^2
    +1\bigg)\label{eq:neu}
    \\
    &\leq c\bigg(\dashint_{S_K}\abs{\bfF(\cdot,\nabla\bfv)}^2\,dx+
    1\bigg)\leq \,c\,\abs{S_K}^{-1}\leq\,c\,h^{-n}.\nonumber
  \end{align}
  % Moreover, $\bfF(\nabla \bfv) \in L^2$ implies
  % \begin{align*}
  %   \dashint_{S_K} \abs{\nabla \bfv}\,dx \leq
  %   \dashint_{S_K} \abs{\bfF(\nabla \bfv)}^2 + 1\,dx \leq c\,
  %   \max \set{1, \abs{K}^{-1}}.
  % \end{align*}
  This allows to apply Lemma~\ref{lem:pxpy} to get
  \begin{align*}
    \dashint_{S_K}\ln(\kappa+|\bfQ|)^2
    (\kappa+|\bfQ|)^{p(x)}\,dx&\leq\,c\,\ln(1+|\bfQ|)^2
    (\kappa+|\bfQ|)^{p_{S_K}^{-}}
    \\
    &\leq\,c\,\Big(\Psi\Big(\abs{\bfQ}^{p_{S_K}^-}\Big)+1\Big),
  \end{align*}
  where $\Psi(t):=\ln(1+t) t$. Since $\Psi$ is convex we have as
  a consequence of \eqref{eq:neu} and Jensen's inequality
  \begin{align*}
    \Psi\Big(\abs{\bfQ}^{p_{S_K}^-}\Big) +1
    &\leq\,c\,\bigg(\dashint_{S_K}\Psi\big(\abs{\bfF(\cdot,\nabla\bfv)
    }\big)\,dx+1\bigg)
    \\
    &\leq\,c\,\bigg(\dashint_{S_K}\ln(\kappa+|\nabla\bfv|)^2
    (\kappa+|\nabla\bfv|)^{p(x)}\,dx +1\bigg).
  \end{align*}
  Inserting this in \eqref{eq:neu2} proves the claim.
\end{proof}

From Lemma~\ref{lem:bestappr}, Lemma~\ref{lem:app_V1} and
Lemma~\ref{lem:app_V2} we immediately deduce the following
interpolation error estimate.
\begin{theorem}
  \label{thm:error}
  Let $p\in C^{0,\alpha}(\overline{\Omega})$ with $p^{-}>1$. Let $\bfv
  \in (W^{1, \px}_0(\Omega))^N$ and $\bfv_h \in \bfV_{h,0}$ be
  solutions of \eqref{eq:weak} and \eqref{eq:weak_h} respectively.
  Then
% Then there exists $c>0$ depending
%   only on $c_{\log}$, $p^-$, $p^+$ and $\gamma_0$ such that
%   Under the assumptions of Lemma~\ref{lem:app_V1} and
%   Lemma~\ref{lem:app_V2}, i.e. $p \in
%   C^{0,\alpha}(\overline{\Omega})$, we have
  \begin{align*}
    \bignorm{\bfF (\cdot,\nabla\bfv) - \bfF (\cdot,\nabla \bfv_h)}_2
    &\leq c \, \min _{\bfpsi_h \in \bfV_{h,0}} \norm{\bfF(\cdot,\nabla
      \bfv) - \bfF(\cdot,\nabla \bfpsi_h)}_2
    \\
    &\leq c\, \bignorm{\bfF (\cdot,\nabla\bfv) - \bfF (\cdot,\nabla
      \Pi_h \bfv)}_2+c\, h^\alpha
    \\
    &\leq c\, h^{\alpha},
  \end{align*}
  where $c$ also depends on $\norm{\bfF(\nabla \bfv)}_{1,2}$.
\end{theorem}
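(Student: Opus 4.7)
The plan is to chain together the three preceding results: Lemma~\ref{lem:bestappr} gives the first inequality for free; then choose the interpolant $\bfpsi_h = \Pi_h \bfv$ (with $\Pi_h$ the Scott--Zhang operator, which satisfies Assumption~\ref{ass:intop} and preserves zero boundary data) to get the second inequality. The bulk of the work is then to establish the final bound by $c\,h^\alpha$.

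For this last step, I would sum the local estimate~\eqref{eq:app_V2} of Lemma~\ref{lem:app_V2} over all $K \in \mathcal{T}_h$. Concretely, multiplying~\eqref{eq:app_V2} by $\abs{K}$, using $\abs{K} \sim \abs{S_K}$ and the fact that the patches $\{S_K\}$ have finite overlap (bounded by~$m_1$ from property~\ref{mesh:NK}), one obtains
\begin{align*}
  \int_\Omega \abs{\bfF(\cdot,\nabla\bfv) - \bfF(\cdot,\nabla\Pi_h\bfv)}^2\,dx
  &\leq c\, h^{2\alpha} \bigg( \int_\Omega \ln(\kappa+\abs{\nabla\bfv})^2(\kappa+\abs{\nabla\bfv})^{\px}\,dx + 1 \bigg)
  \\
  &\quad + c\, h^2 \int_\Omega \abs{\nabla \bfF(\cdot,\nabla\bfv)}^2\,dx + c\sum_K \abs{K}\, h_K^m.
\end{align*}
Here I pick $m$ larger than $n/2$ so that $\sum_K \abs{K} h_K^m \leq c\,h^{m-n}\abs{\Omega} \lesssim h^{2\alpha}$, and since $\alpha \le 1$ and $h \leq 1$ also $h^2 \leq h^{2\alpha}$; so both remainder terms are absorbed into $c\,h^{2\alpha}(\norm{\bfF(\cdot,\nabla\bfv)}_{1,2}^2 + 1)$, modulo the logarithmic integral.

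Before invoking~\eqref{eq:app_V2}, I must check its smallness hypothesis $\dashint_{S_K}\abs{\nabla \bfv}\,dx \le c_5\max\{1,\abs{K}^{-m}\}$. This is automatic for $m$ chosen large enough: H{\"o}lder's inequality and $\bfv \in W^{1,\px}_0(\Omega) \hookrightarrow W^{1,p^-}(\Omega)$ give $\dashint_{S_K}\abs{\nabla\bfv}\,dx \leq c\,\abs{S_K}^{-1/p^-}\norm{\nabla\bfv}_{p^-}$, which is $\le c\,\abs{K}^{-m}$ once $m \geq n/p^-$.

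The main obstacle is the logarithmic integral $\int_\Omega \ln(\kappa+\abs{\nabla\bfv})^2(\kappa+\abs{\nabla\bfv})^{\px}\,dx$, which must be controlled by $\norm{\bfF(\cdot,\nabla\bfv)}_{1,2}$. The point is that $\bfF(\cdot,\nabla\bfv) \in W^{1,2}(\Omega)$ implies, by Sobolev embedding, $\bfF(\cdot,\nabla\bfv) \in L^q$ for some $q > 2$, hence $(\kappa+\abs{\nabla\bfv})^{\px} \in L^{q/2}$. Since for any $\epsilon>0$ one has $\ln(\kappa+t)^2 \leq c_\epsilon(1 + t^\epsilon)$, choosing $\epsilon$ small enough relative to $q/2 - 1$ lets us absorb the logarithmic factor into the higher-integrability gain, bounding the integral by a constant depending only on $\norm{\bfF(\cdot,\nabla\bfv)}_{1,2}$ and $\abs{\Omega}$. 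Combining all of this yields the asserted $O(h^\alpha)$ bound.
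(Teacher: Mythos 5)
Your proposal is correct and follows exactly the route the paper intends: Theorem~\ref{thm:error} is stated there as an immediate consequence of Lemma~\ref{lem:bestappr}, Lemma~\ref{lem:app_V1} and Lemma~\ref{lem:app_V2}, and you supply precisely the omitted details (choice $\bfpsi_h=\Pi_h\bfv$, summation of the local estimate using $\abs{K}\sim\abs{S_K}$ and finite overlap of the patches, verification of the smallness hypothesis for large~$m$, and control of the logarithmic integral via $\bfF(\cdot,\nabla\bfv)\in W^{1,2}\hookrightarrow L^q$, $q>2$). Only two harmless slips: $\sum_K\abs{K}h_K^m\le h^m\abs{\Omega}$ rather than $c\,h^{m-n}\abs{\Omega}$, and H{\"o}lder gives the hypothesis already for $m\ge 1/p^-$ — but since larger $m$ only weakens the hypothesis, neither affects the argument.
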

\begin{remark}
  In the case of~$p \in C^{0,\alpha}$ with $\alpha <1$ one cannot
  expect that $\bfF(\cdot,\nabla \bfv) \in W^{1,2}(\Omega)$ even
  locally. However, motivated by the results of~\cite{DieE08} we
  believe that $p \in C^{0,\alpha}$ should imply $\bfF(\cdot , \nabla
  \bfv) \in \mathcal{N}^{\alpha,2}$, where $\mathcal{N}^{\alpha,2}$ is
  the \Nikolskii{} space with order of
  differentiability~$\alpha$. This is enough to ensure that
  \begin{align*}
    \sum_K \int_{S_K} \bigabs{\bfF(\cdot,\nabla \bfv) -
      \langle\bfF(\cdot,\nabla\bfv)\rangle_{S_K}}^2 \,dx\leq
    c\,h_K^{2\alpha}.
  \end{align*}
  This still implies the $\mathcal{O}(h^\alpha)$-convergence of
  Theorem~\ref{thm:error}. 
\end{remark}

\subsection{Frozen exponents}

For the application it is convenient to replace the exponent
$p(\cdot)$ by some local approximation. 
\begin{align*}
  p_{\mathcal{T}} &:= \sum_{K \in \mathcal{T}}  p(x_K)\chi_K,
\end{align*}
where $x_K:=\mathrm{argmin}_K p$, i.e. $p(x_K)= p_K^-$, and consider
\begin{align*}
  \bfA_{\mathcal T}(x,\bfxi)=\sum_{K\in\mathcal T}\chi_K(x)
  \bfA\big(x_K,\bfxi\big)
\end{align*}
instead of $\bfA$. In the following we will show that this does not
effect our results about convergence. Using monotone operator theory
we can find a function $\tilde{\bfv}_h\in\bfV_{h,0}$
\begin{align}
  \label{eq:weak_h'}
  \int_\Omega \bfA_{\mathcal T} (\cdot,\nabla\tilde{\bfv}_h) \cdot
  \nabla\bfpsi_h\, dx = \int_\Omega \bff \cdot \bfpsi_h\, dx
\end{align}
for all $\bfpsi\in \bfV_{h,0}$. The coercivity $\bfA_{\mathcal{T}}$
implies that $\tilde{\bfv}_h \in
(W^{1,p_{\mathcal{T}}(\cdot)}(\Omega))^N$. We will show now that also
$\tilde{\bfv}_h\in (W^{1,\px}(\Omega))^N$. We begin with the local estimate
\begin{align}
  \label{eq:v'1}
  \begin{aligned}
    \|\nabla\tilde{\bfv}_h\|_{L^\infty(K)}&\leq
%     c\dashint_{K}|\nabla\tilde{\bfv}_h|\,dx \leq 
    c \bigg(\dashint_{K}|\nabla\tilde{\bfv}_h |^{p(x_K)}\,dx\bigg)^{
      \frac{1}{p(x_K)}} \leq c\, h_K^{-\frac{d}{p(x_K)}}.
%     \\
%     &\leq c|K|^{-\frac{1}{p^{-}}}\bigg(1+\sum_K\int_{K}|\nabla
%     \tilde{\bfv}_h|^{p(x_K)}\,dx\bigg)^{ \frac{1}{p^-}} \leq
%     c\,h^{-\frac{d}{p^{-}}}.
  \end{aligned}
\end{align}
Thus, we can apply Lemma~\ref{lem:pxpy} to find
\begin{align*}
  \int_\Omega |\nabla\tilde{\bfv}_h|^{\px} \,dx&\leq \int_\Omega
  (1+|\nabla\tilde{\bfv}_h|)^{\px} \,dx \leq c \int_\Omega
  (1+|\nabla\tilde{\bfv}_h|)^{p_{\mathcal{T}}} \,dx \leq c(\bff).
\end{align*}
% This immediateley yields
%   \begin{align}
%   \label{eq:v'2}
%     \ln(\kappa+\|\nabla\tilde{\bfv}_h\|_\infty)\leq \,c\,|\ln h|.
%   \end{align}
% We use H{\"o}lder continuity of $p$ and \eqref{eq:v'0} to estimate
% \begin{align*}
%   \int_\Omega |\nabla\tilde{\bfv}_h|^{\px} \,dx&=\sum_K \int_K
%   |\nabla\tilde{\bfv}_h|^{p(x_K)+p(x)-p(x_k)} \,dx
%   \\
%   &\leq \|\nabla\tilde{\bfv}_h\|_\infty^{ch^\alpha}\sum_K \int_K
%   |\nabla\tilde{\bfv}_h|^{p(x_K)} \,dx.
% \end{align*}
% Now applying \eqref{eq:v'1} and using $\lim_{h\rightarrow0} h^{-h}=1$ shows
% \begin{align}\label{eq:v'3}
% \int_\Omega |\nabla\tilde{\bfv}_h|^{\px} \,dx&
% \leq \,c(\bff).
% \end{align}
\begin{remark}
  Indeed, the steps above show, that the norms
  $\norm{\cdot}_{p_{\mathcal{T}}(\cdot)}$ and
  $\norm{\cdot}_{p(\cdot)}$ are equivalent on any finite element space
  consisting locally of polynomials of finite order. Even more, if
  $\norm{g}_{\px} \leq 1$, then
  \begin{align*}
    \int_\Omega \abs{g(x)}^{p(x)}\,dx &\leq c\, \int_\Omega
    \abs{g(x)}^{p_{\mathcal{T}}(x)} \,dx + c\, h^m
  \end{align*}
  for any fixed~$m$, where $c=c(m)$. The same estimate holds also with
  $p$ and $p_{\mathcal{T}}$ interchanged.
\end{remark}
Following the same ideas as before we want to estimate
\begin{align*}
  \norm{\bfF_{\mathcal T}(\cdot,\nabla\bfv) - \bfF_{\mathcal
      T}(\cdot,\nabla\tilde{\bfv}_h)}_2^2 &=\sum_{K\in\mathcal
    T}\int_K\abs{\bfF(x_K,\nabla\bfv) -
    \bfF(x_K,\nabla\tilde{\bfv}_h)}^2\,dx,
  \\
  \text{where }\bfF_{\mathcal T}(x,\bfxi)&:=\sum_{K\in\mathcal T}\chi_K(x)
  \bfF\big(x_K,\bfxi\big).
\end{align*}
Now we are confronted with the problem that $\bfv$ and
$\tilde{\bfv}_h$ solve two different equations. Due to this an
additional error term occurs. Following the same approach as before we
gain for $\bfpsi\in\bfV_{0,h}$ arbitrary
\begin{align*}
  \norm{\bfF_{\mathcal T}(\cdot,\nabla\bfv) - \bfF_{\mathcal
      T}(\cdot,\nabla\tilde\bfv_h)}_2^2 &\leq c\,
  \int_\Omega\big(\bfA_{\mathcal T}(\cdot,\nabla
  \bfv)-\bfAT(\cdot,\nabla \tilde{\bfv}_h)\big):
  \big(\nabla\bfv-\nabla\tilde{\bfv}_h\big)\,dx
  \\
  &= c\, \int_\Omega\big(\bfA(\cdot,\nabla\bfv)-\bfAT(\cdot,
  \nabla\tilde{\bfv}_h)\big):
  \big(\nabla\bfv-\nabla\tilde{\bfpsi}_h\big)\,dx
  \\
  &\quad +c\,
  \int_\Omega\big(\bfAT(\cdot,\nabla\bfv)-\bfA(\cdot,\nabla\bfv)\big):
  \big(\nabla\bfv-\nabla\tilde{\bfv}_h\big)\,dx
  \\
  &= c\, \int_\Omega\big(\bfAT(\cdot,\nabla\bfv)-\bfAT(\cdot,
  \nabla\tilde{\bfv}_h)\big):
  \big(\nabla\bfv-\nabla\tilde{\bfpsi}_h\big)\,dx
  \\
  &\quad +c\,
  \int_\Omega\big(\bfAT(\cdot,\nabla\bfv)-\bfA(\cdot,\nabla\bfv)\big):
  \big(\nabla\bfv-\nabla\tilde{\bfv}_h\big)\,dx
  \\
  &\quad +c\,
  \int_\Omega\big(\bfA(\cdot,\nabla\bfv)-\bfAT(\cdot,\nabla\bfv)\big):
  \big(\nabla\bfv-\nabla\bfpsi_h\big)\,dx.
\end{align*}
As before this implies
\begin{align*}
  \norm{\bfF_{\mathcal T}(\cdot,\nabla\bfv) - \bfF_{\mathcal
      T}(\cdot,\nabla\tilde\bfv_h)}_2^2 &\leq c\,\norm{\bfF_{\mathcal
      T}(\cdot,\nabla\bfv) - \bfF_{\mathcal
      T}(\cdot,\nabla\bfpsi_h)}_2^2
  \\
  &\quad +c\,
  \int_\Omega\big(\bfAT(\cdot,\nabla\bfv)-\bfA(\cdot,\nabla\bfv)\big):
  \big(\nabla\bfv-\nabla\tilde{\bfv}_h\big)\,dx
  \\
  &\quad +c\,
  \int_\Omega\big(\bfA(\cdot,\nabla\bfv)-\bfAT(\cdot,\nabla\bfv)\big):
  \big(\nabla\bfv-\nabla\bfpsi_h\big)\,dx
  \\
  &=: (I) + (II) + (III).
%   \\
%   &\qquad+\int_\Omega\big(\bfA_{\mathcal
%     T}(\cdot,\nabla\bfv)-\bfA(\cdot,\nabla\bfv)\big):
%   \big(\nabla\bfpsi_h-\nabla\tilde{\bfv}_h\big)\,dx.
\end{align*}
In particular, we get a best approximation result with the two
additional error terms $(II)$ and $(III)$. We begin in the following
with~$(II)$.

To estimate the difference between $\bfAT$ and $\bfA$ we need the
estimate
\begin{align*}
  \lefteqn{\bigabs{\bfA_{\mathcal T}(x,\bfQ)-\bfA(x,\bfQ)}} \quad &
  \\
  &\leq c\, \abs{p_{\mathcal{T}}(x) - p(x)}\, \abs{\ln(\kappa+
    \abs{\bfQ})} \Big( (\kappa + \abs{\bfQ})^{p_{\mathcal{T}}(x)-2} +
  (\kappa + \abs{\bfQ})^{p(x)-2}\Big) \abs{\bfQ}
  \\
  &\leq c\, h^\alpha \abs{\ln(\kappa+
    \abs{\bfQ})} \Big( (\kappa + \abs{\bfQ})^{p_{\mathcal{T}}(x)-2} +
  (\kappa + \abs{\bfQ})^{p(x)-2}\Big) \abs{\bfQ}
\end{align*}
for all $\bfQ \in \RNn$ using also that $p \in C^{0,\alpha}$. Hence,
we get
\begin{align*}
  (II)&:=\int_\Omega\big(\bfA_{\mathcal
    T}(\cdot,\nabla\bfv)-\bfA(\cdot,\nabla\bfv)\big):
  \big(\nabla\bfv-\nabla\tilde{\bfv}_h\big)\,dx
  \\
%   &\mspace{-20mu}\leq c\, h^\alpha\!\!\int_\Omega \abs{\ln(\kappa\!+\!
%     \abs{\nabla\bfv})} \Big( (\kappa \!+\!
%   \abs{\nabla\bfv})^{p_{\mathcal{T}}(x)-1} \!+\! (\kappa \!+\!
%   \abs{\nabla\bfv})^{p(x)-1}\Big)
%   \abs{\nabla\bfv\!-\!\nabla\tilde{\bfv}_h}\,dx.
%   \\
%   &\leq c\, h^\alpha\,\sum_{K\in \mathcal T}\int_K \Big(\abs{\ln(
%     \kappa+\abs{\nabla\bfv})} (
%   \kappa+\abs{\nabla\bfv})^{\frac{p_{K}^+}{2}}+1\Big)(
%   \kappa+\abs{\nabla\bfv})^{\frac{p_k^--2}{2}}
%   \abs{\nabla\bfv-\nabla\tilde{\bfv}_h}\,dx
%   \\
%   &= c\, h^\alpha\,\sum_{K\in \mathcal T^+}...+c\, h^\alpha\sum_{K\in
%     \mathcal T^-}...  =:(I)_1+(I)_2
% \end{align*}
% We split this into
% \begin{align*}
  &\leq c\, h^\alpha\!\!\int_\Omega \abs{\ln(\kappa\!+\!
    \abs{\nabla\bfv})} (\kappa \!+\!
  \abs{\nabla\bfv})^{p_{\mathcal{T}}(x)-2} \abs{\nabla \bfv}
  \abs{\nabla\bfv\!-\!\nabla\tilde{\bfv}_h}\,dx
  \\
  &\quad + c\, h^\alpha\!\!\int_\Omega \abs{\ln(\kappa\!+\!
    \abs{\nabla\bfv})} (\kappa \!+\!  \abs{\nabla\bfv})^{p(x)-2}
  \abs{\nabla \bfv} \abs{\nabla\bfv\!-\!\nabla\tilde{\bfv}_h}\,dx
  \\
  &=: (II)_1+(II)_2.
\end{align*}
We begin with the estimate for~$(II)_1$ on each~$K \in
\mathcal{T}$. Define the N-function
\begin{align*}
  \phi^K(t) := \int_0^t (\kappa + s)^{p_K-2}s \,ds.
\end{align*}
Using this definition we estimate
\begin{align*}
  (II)_1 &\leq c\, \sum_{K \in \mathcal{T}} \int_K h^\alpha
  \abs{\ln(\kappa\!+\!  \abs{\nabla\bfv})} (\phi^K)'( \abs{\nabla
    \bfv}) \abs{\nabla\bfv\!-\!\nabla\tilde{\bfv}_h}\,dx.
\end{align*}
Using Young's inequality with $\phi^K_{\abs{\nabla u}}$ on
$\abs{\nabla\bfv\!-\!\nabla\tilde{\bfv}_h}$ and its complementary
function on the rest, we get
\begin{align*}
  (II)_1 &\leq \delta \sum_{K \in \mathcal{T}} \int_K
  (\phi^K)_{\abs{\nabla
      \bfv}}(\abs{\nabla\bfv\!-\!\nabla\tilde{\bfv}_h})\,dx
  \\
  &+ c_\delta\, \sum_{K \in \mathcal{T}} \int_K \big(
  (\phi^K)_{\abs{\nabla \bfv}} \big)^* \Big( h^\alpha
  \abs{\ln(\kappa\!+\!  \abs{\nabla\bfv})} (\phi^K)'( \abs{\nabla
    \bfv}) \Big) \,dx.
\end{align*}
Now we use Lemma~\ref{lem:hammer} for the first line and
Lemma~\ref{lem:shiftedindex} and Lemma~\ref{lem:shifted2}
(with $\lambda=h^\alpha \leq 1$ using $h \leq 1$) for the second line
to find
\begin{align*}
  (II)_1 &\leq \delta c \norm{\bfF_{\mathcal T}(\cdot,\nabla\bfv) -
    \bfF_{\mathcal T}(\cdot,\nabla\tilde\bfv_h)}_2^2
  \\
  &+ c_\delta\, \sum_{K \in \mathcal{T}} \int_K
  (1+\abs{\ln(\kappa\!+\!  \abs{\nabla\bfv})})^{\max \set{2,p_K'}}
  \big( (\phi^K)_{\abs{\nabla \bfv}} \big)^* \Big( h^\alpha (\phi^K)'(
  \abs{\nabla \bfv}) \Big) \,dx
  \\
  &\leq \delta c \norm{\bfF_{\mathcal T}(\cdot,\nabla\bfv) -
    \bfF_{\mathcal T}(\cdot,\nabla\tilde\bfv_h)}_2^2
  \\
  &+ c_\delta\, \sum_{K \in \mathcal{T}} h^{2\alpha} \int_K
  (1+\abs{\ln(\kappa\!+\!  \abs{\nabla\bfv})})^{\max \set{2,p_K'}}
  (\phi^K)( \abs{\nabla \bfv}) \,dx.
\end{align*}
The term $(II)_2$ is estimate similarly taking into account the extra
factor $(\kappa + \abs{\nabla \bfv})^{p(x)-p_\mathcal{T}(x)}$. We get
\begin{align*}
  (II)_2 &\leq \delta c \norm{\bfF_{\mathcal T}(\cdot,\nabla\bfv) -
    \bfF_{\mathcal T}(\cdot,\nabla\tilde\bfv_h)}_2^2
  \\
  &\mspace{-30mu}+ c_\delta \sum_{K \in \mathcal{T}} h^{2\alpha}
  \!\!\int_K \big(1\!+\!\abs{\ln(\kappa\!+\!  \abs{\nabla\bfv})}
  (\kappa \!+\!  \abs{\nabla \bfv})^{p(x)-p_\mathcal{T}(x)}\big)^{\max
    \set{2,p_K'}} (\phi^K)( \abs{\nabla \bfv}) \,dx.
\end{align*}
Overall, this yields
\begin{align*}
  (II) &\leq \delta c\,\norm{\bfF_{\mathcal T}(\cdot,\nabla\bfv) -
    \bfF_{\mathcal T}(\cdot,\nabla\tilde\bfv_h)}_2^2 + c_\delta c_s
  h^{2\alpha} \int_\Omega \big(1 + \abs{\nabla \bfv}^{p(x)\,s}\big)
  \,dx,
\end{align*}
for some $s >1$. For $h$ small we can choose $s$
close to~$1$.

For $(III)$ the analogous estimate is
\begin{align*}
  (III) &\leq  c\,\norm{\bfF_{\mathcal T}(\cdot,\nabla\bfv) -
    \bfF_{\mathcal T}(\cdot,\nabla\bfpsi_h)}_2^2 +  c_s
 h^{2\alpha} \int_\Omega \big(1 + \abs{\nabla
    \bfv}^{p(x)\,s}\big) \,dx.
\end{align*}

We finally end up with the following result.
\begin{lemma}[best approximation]
  \label{lem:frozencea}
  Let $p\in C^{0,\alpha}(\overline{\Omega})$ for some $\alpha \in
  (0,1]$ with $p^{-}>1$. Let $\bfv \in (W^{1, \px}_0(\Omega))^N$
  and $\tilde{\bfv}_h \in \bfV_{h,0}$ be the solutions of
  \eqref{eq:weak} and \eqref{eq:weak_h'} respectively.  Then for some
  $s >1$ (close to~$1$ for $h$ small)
  \begin{align*}
    \norm{\bfF_{\mathcal T}(\cdot,\nabla\bfv) - \bfF_{\mathcal
        T}(\cdot,\nabla\tilde{\bfv}_h)}_2 &\le c \, \min _{\bfpsi_h
      \in \bfV_{h,0}} \norm{\bfF_{\mathcal T}(\cdot,\nabla \bfv) -
      \bfF_{\mathcal T}(\cdot,\nabla \bfpsi_h)}_2
    \\
    &+ c\,h^\alpha \bigg(\int_\Omega \big(1 + \abs{\nabla
      \bfv}^{p(x)\,s}\big) \,dx \bigg)^{\frac 12}.
  \end{align*}
%   In particular, if $\bfF(\cdot,\nabla\bfv) \in
%   (W^{1,2}(\Omega))^{N\times n}$, then
%   \begin{align*}
%     \bignorm{\bfF_{\mathcal T} (\cdot,\nabla\bfv) - \bfF_{\mathcal T}
%       (\cdot,\nabla \tilde{\bfv}_h)}_2 &\leq c\,h^\alpha.
%   \end{align*}
\end{lemma}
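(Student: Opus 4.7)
The plan is to reduce everything to the three-term decomposition $(I) + (II) + (III)$ that has already been set up in the paragraphs leading to the statement. First, by the strong monotonicity of the frozen operator (Lemma~\ref{lem:hammer}, applied simplex by simplex to the functions $\bfA(x_K,\cdot)$), the squared $\bfF_{\mathcal T}$-distance is controlled by the natural bilinear quantity $\int_\Omega (\bfAT(\cdot,\nabla\bfv) - \bfAT(\cdot,\nabla\tilde\bfv_h)) : (\nabla\bfv - \nabla\tilde\bfv_h)\,dx$. For an arbitrary $\bfpsi_h \in \bfV_{h,0}$ I would then use the two Galerkin relations~\eqref{eq:weak} and~\eqref{eq:weak_h'}, together with adding and subtracting $\bfA(\cdot,\nabla\bfv)$ in appropriate places, to arrive exactly at the decomposition $(I) + (II) + (III)$ already displayed in the section.

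The first term~$(I)$ is a pure best-approximation error for the frozen problem, and I would handle it precisely as in Lemma~\ref{lem:bestappr}: Young's inequality with the shifted N-function $(\phi^K)_{\abs{\nabla\bfv}+\kappa}$ on each simplex, followed by Lemma~\ref{lem:hammer}, yields
\begin{align*}
 (I) \le \tau\,\norm{\bfF_{\mathcal T}(\cdot,\nabla\bfv) - \bfF_{\mathcal T}(\cdot,\nabla\tilde\bfv_h)}_2^2 + c(\tau)\,\norm{\bfF_{\mathcal T}(\cdot,\nabla\bfv) - \bfF_{\mathcal T}(\cdot,\nabla\bfpsi_h)}_2^2
\end{align*}
for any $\tau>0$. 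For $(II)$ and $(III)$, the key ingredient is the pointwise inequality
\begin{align*}
  \abs{\bfAT(x,\bfQ) - \bfA(x,\bfQ)} \le c\,h^\alpha\,\abs{\ln(\kappa+\abs{\bfQ})}\Big((\kappa+\abs{\bfQ})^{p_{\mathcal T}(x)-2} + (\kappa+\abs{\bfQ})^{p(x)-2}\Big)\abs{\bfQ},
\end{align*}
available from the $C^{0,\alpha}$-regularity of~$p$. Splitting each of $(II)$ and $(III)$ according to the two powers of $(\kappa+\abs{\bfQ})$, I would apply Young's inequality with $\phi^K_{\abs{\nabla\bfv}}$ (and its complementary function), and then invoke Lemma~\ref{lem:shiftedindex} and Lemma~\ref{lem:shifted2} with $\lambda = h^\alpha \le 1$ to pull out the factor $h^{2\alpha}$. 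The resulting cross power $(\kappa+\abs{\nabla\bfv})^{p(x)-p_{\mathcal T}(x)}$ is absorbed via Lemma~\ref{lem:pxpy} at the cost of replacing $p(x)$ by $p(x)\,s$ for some $s>1$, which can be chosen arbitrarily close to~$1$ for small~$h$.

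Summing the three contributions, I would fix $\tau$ and $\delta$ small enough to absorb every multiple of $\norm{\bfF_{\mathcal T}(\cdot,\nabla\bfv) - \bfF_{\mathcal T}(\cdot,\nabla\tilde\bfv_h)}_2^2$ into the left-hand side, take square roots, and finally minimize over $\bfpsi_h \in \bfV_{h,0}$. The main obstacle is precisely the cross-equation term $(II)$: since $\tilde\bfv_h$ does not satisfy the continuous equation, one cannot avoid a direct bound on $\abs{\nabla\bfv - \nabla\tilde\bfv_h}$, and only the shifted Young inequality together with the shift-change lemmas makes it possible to absorb this contribution into the left-hand side while keeping the clean $h^{2\alpha}$-rate on the right-hand side.
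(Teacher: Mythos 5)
Your proposal follows the paper's own argument essentially verbatim: the same three-term decomposition $(I)+(II)+(III)$ obtained from Lemma~\ref{lem:hammer} and the two Galerkin relations, the same Young-inequality treatment of $(I)$ as in Lemma~\ref{lem:bestappr}, and the same use of Lemma~\ref{lem:shiftedindex} and Lemma~\ref{lem:shifted2} with $\lambda=h^\alpha$ to extract the factor $h^{2\alpha}$ from $(II)$ and $(III)$, with the logarithm and the cross power $(\kappa+\abs{\nabla\bfv})^{p(x)-p_{\mathcal T}(x)}$ absorbed into the exponent $s>1$ and the $\delta$-terms absorbed into the left-hand side before taking square roots and minimizing. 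The only cosmetic difference is your appeal to Lemma~\ref{lem:pxpy} for that last absorption, where the paper relies just on the elementary consequence of $\abs{p-p_{\mathcal T}}\le c\,h^\alpha$; this does not change the argument.
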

We estimate the best approximation error by the projection error using
$\psi_h = \Pi_h \bfv$.
\begin{align*}
  \norm{\bfF_{\mathcal T}(\cdot,\nabla\bfv) - \bfF_{\mathcal T}(\cdot,\nabla\Pi_h \bfv)}_2 &\leq
  \norm{\bfF(\cdot,\nabla\bfv) - \bfF(\cdot,\nabla\Pi_h \bfv)}_2
  \\
  &\quad+ \norm{\bfF_{\mathcal T}(\cdot,\nabla\bfv) -
    \bfF(\cdot,\nabla \bfv)}_2
  \\
  &\quad + \norm{\bfF_{\mathcal T}(\cdot,\nabla\Pi_h \bfv) - \bfF
    (\cdot,\nabla\Pi_h \bfv)}_2
  \\
  &=: [I] + [II] + [III].
\end{align*}
We already know that $[I] \leq c\, h^\alpha$ by
Theorem~\ref{thm:error}. The estimate for $[II]$ and $[III]$ are
similar. Analogously to the estimate for~$\abs{\bfA_{\mathcal
    T}(x,\bfQ)-\bfA(x,\bfQ)}$ we have
\begin{align*}
  \lefteqn{\bigabs{\bfF_{\mathcal T}(x,\bfQ)-\bfF(x,\bfQ)}} \quad &
  \\
  &\leq c\, \abs{p_{\mathcal{T}}(x) - p(x)}\, \abs{\ln(\kappa+
    \abs{\bfQ})} \Big( (\kappa +
  \abs{\bfQ})^{\frac{p_{\mathcal{T}}(x)-2}{2}} + (\kappa +
  \abs{\bfQ})^{\frac{p(x)-2}{2}}\Big) \abs{\bfQ}.
\end{align*}
This implies as above
\begin{align*}
  [II] &\leq c\, h^\alpha \bigg(\int_\Omega (1+ \abs{\nabla
    \bfv})^{sp(x)}\,dx \bigg)^{\frac 12},
  \\
  [III] &\leq c\, h^\alpha \bigg(\int_\Omega (1+ \abs{\nabla \Pi_h
    \bfv})^{sp(x)}\,dx \bigg)^{\frac 12}.
\end{align*}
We can use the stability of~$\Pi_h$, see Lemma~\ref{thm:stabo} (for
the $a=0$ and the exponent~$sp(\cdot)$) to get
\begin{align*}
  [III] &\leq c\, h^\alpha \bigg(\int_\Omega (1+ \abs{\nabla
    \bfv})^{sp(x)}\,dx \bigg)^{\frac 12}.
\end{align*}
We summarize our calculations in the following theorem.
\begin{theorem}
\label{thm:frozen}
  Let $p\in C^{0,\alpha}(\overline{\Omega})$ with $p^{-}>1$. Let $\bfv
  \in (W^{1, \px}_0(\Omega))^N$ and $\tilde{\bfv}_h \in \bfV_{h,0}$ be
  solutions of \eqref{eq:weak} and \eqref{eq:weak_h'} respectively.
  Then
  \begin{align*}
    \bignorm{\bfF_{\mathcal{T}} (\cdot,\nabla\bfv) -
      \bfF_{\mathcal{T}} (\cdot,\nabla \tilde{\bfv}_h)}_2 &\leq c \,
    \min _{\bfpsi_h \in \bfV_{h,0}}
    \norm{\bfF_{\mathcal{T}}(\cdot,\nabla \bfv) -
      \bfF_{\mathcal{T}}(\cdot,\nabla \bfpsi_h)}_2 + c\, h^\alpha
    \\
    &\leq c\, \bignorm{\bfF_{\mathcal{T}} (\cdot,\nabla\bfv) -
      \bfF_{\mathcal{T}} (\cdot,\nabla \Pi_h \bfv)}_2+c\, h^\alpha
    \\
    &\leq c\, h^{\alpha},
  \end{align*}
  where $c$ also depends on $\norm{\bfF(\nabla \bfv)}_{1,2}$.
\end{theorem}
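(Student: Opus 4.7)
The plan is to assemble Theorem~\ref{thm:frozen} directly from the ingredients laid out just before it. First I would apply Lemma~\ref{lem:frozencea}, which already gives the best-approximation estimate
\begin{align*}
  \bignorm{\bfF_{\mathcal T}(\cdot,\nabla\bfv) - \bfF_{\mathcal T}(\cdot,\nabla\tilde\bfv_h)}_2
  &\leq c\min_{\bfpsi_h \in \bfV_{h,0}}\norm{\bfF_{\mathcal T}(\cdot,\nabla\bfv) - \bfF_{\mathcal T}(\cdot,\nabla\bfpsi_h)}_2
  + c\,h^\alpha\Bigl(\int_\Omega (1+\abs{\nabla\bfv}^{p(x)s})\,dx\Bigr)^{1/2}
\end{align*}
for some $s>1$. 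This establishes the first inequality, provided we can absorb the trailing integral into the constant depending on $\norm{\bfF(\cdot,\nabla\bfv)}_{1,2}$; I will deal with this at the end.

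Next I pick the admissible competitor $\bfpsi_h := \Pi_h \bfv$ where $\Pi_h$ is, say, the Scott--Zhang interpolation of Assumption~\ref{ass:intop}. Inserting the intermediate quantity $\bfF(\cdot,\nabla\bfv)$ and $\bfF(\cdot,\nabla\Pi_h\bfv)$, the triangle inequality yields the splitting
\begin{align*}
  \norm{\bfF_{\mathcal T}(\cdot,\nabla\bfv) - \bfF_{\mathcal T}(\cdot,\nabla\Pi_h\bfv)}_2
  &\le \underbrace{\norm{\bfF(\cdot,\nabla\bfv) - \bfF(\cdot,\nabla\Pi_h\bfv)}_2}_{[I]}
  + \underbrace{\norm{\bfF_{\mathcal T}(\cdot,\nabla\bfv) - \bfF(\cdot,\nabla\bfv)}_2}_{[II]}\\
  &\quad + \underbrace{\norm{\bfF_{\mathcal T}(\cdot,\nabla\Pi_h\bfv) - \bfF(\cdot,\nabla\Pi_h\bfv)}_2}_{[III]},
\end{align*}
which is exactly the decomposition used in the discussion before the theorem. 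This gives the second inequality once each of $[II],[III]$ is also bounded by $c h^\alpha$.

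Now I bound the three terms. For $[I]$ I invoke Theorem~\ref{thm:error}, which directly gives $[I]\le c\,h^\alpha$ with $c$ depending on $\norm{\bfF(\cdot,\nabla\bfv)}_{1,2}$. For $[II]$ and $[III]$ I use the pointwise estimate for $\abs{\bfF_{\mathcal T}(x,\bfQ)-\bfF(x,\bfQ)}$ already derived in the passage above (coming from the H{\"o}lder continuity $\abs{p_{\mathcal T}(x)-p(x)}\le c\,h^\alpha$ together with the elementary derivative of $t\mapsto t^{q/2}$ in $q$). Squaring and integrating yields
\begin{align*}
  [II] &\le c\,h^\alpha\Bigl(\int_\Omega (1+\abs{\nabla\bfv})^{s p(x)}\,dx\Bigr)^{1/2},\qquad
  [III] \le c\,h^\alpha\Bigl(\int_\Omega (1+\abs{\nabla\Pi_h\bfv})^{s p(x)}\,dx\Bigr)^{1/2}.
\end{align*}
For $[III]$ the stability of $\Pi_h$ in Lemma~\ref{thm:stabo} (applied with $a=0$ and the exponent $s p(\cdot)$, which remains $\log$-H{\"o}lder continuous) reduces the integrand to one involving $\nabla\bfv$, giving the same bound as $[II]$.

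The last, and main, step is to show that the integral $\int_\Omega (1+\abs{\nabla\bfv})^{s p(x)}\,dx$ is finite and controlled by $\norm{\bfF(\cdot,\nabla\bfv)}_{1,2}$ for some $s>1$, so that all error terms can be subsumed into the constant. Since $\abs{\bfF(\cdot,\nabla\bfv)}^2\sim(\kappa+\abs{\nabla\bfv})^{p(\cdot)}$ and $\bfF(\cdot,\nabla\bfv)\in W^{1,2}(\Omega)$, the Sobolev embedding $W^{1,2}\hookrightarrow L^{2^*}$ gives $\bfF(\cdot,\nabla\bfv)\in L^{2^*}$, equivalently $(1+\abs{\nabla\bfv})^{p(\cdot)}\in L^{2^*/2}$, so the integral is finite with $s=2^*/2>1$. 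This is the only delicate point; once handled, combining the three estimates yields the chain of inequalities stated in the theorem, and the dependence on $\norm{\bfF(\cdot,\nabla\bfv)}_{1,2}$ is exactly the one claimed.
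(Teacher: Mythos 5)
Your proof follows the paper's own argument exactly: Lemma~\ref{lem:frozencea} for the first inequality, the choice $\bfpsi_h=\Pi_h\bfv$ with the $[I]+[II]+[III]$ splitting for the second, and Theorem~\ref{thm:error} together with the pointwise bound on $\abs{\bfF_{\mathcal T}(x,\bfQ)-\bfF(x,\bfQ)}$ and the stability of $\Pi_h$ (Lemma~\ref{thm:stabo} with $a=0$ and exponent $sp(\cdot)$) for the third. Your Sobolev-embedding justification that $\int_\Omega(1+\abs{\nabla\bfv})^{sp(x)}\,dx$ is controlled by $\norm{\bfF(\cdot,\nabla\bfv)}_{1,2}$ is a detail the paper leaves implicit, and it is a correct way to close that gap.
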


\appendix

\section{Orlicz spaces}
\label{sec:Orlicz spaces}
\noindent
The following definitions and results are standard in the theory of
Orlicz spaces and can for example be found in~\cite{KraR61,RaoR91}.
A continuous, convex function $\rho\,:\, [0,\infty) \to [0,\infty)$
with $\rho(0)=0$, and $\lim_{t \to \infty} \rho(t) = \infty$ is called
a {\em continuous, convex $\phi$-function}.  

We say that $\phi$
satisfies the $\Delta_2$--condition, if there exists $c > 0$ such that
for all $t \geq 0$ holds $\phi(2t) \leq c\, \phi(t)$. By
$\Delta_2(\phi)$ we denote the smallest such constant. Since $\phi(t)
\leq \phi(2t)$ the $\Delta_2$-condition is equivalent to $\phi(2t)
\sim \phi(t)$ uniformly in $t$. For a family $\phi_\lambda$ of
continuous, convex $\phi$-functions we define
$\Delta_2(\set{\phi_\lambda}) := \sup_\lambda \Delta_2(\phi_\lambda)$.
Note that if $\Delta_2(\phi) < \infty$ then $\phi(t) \sim \phi(c\,t)$
uniformly in $t\geq 0$ for any fixed $c>0$.
By $L^\phi$ and $W^{k,\phi}$, $k\in \setN_0$,  we denote the classical Orlicz and
Orlicz-Sobolev spaces, i.e.\ $f \in L^\phi$ iff $\int
\phi(\abs{f})\,dx < \infty$ and $f \in W^{k,\phi}$ iff $ \nabla^j f
\in L^\phi$, $0\le j\le k$.\\
A $\phi$-function $\rho$ is called a $N$-function iff it is strictly increasing and convex with
\begin{align*}
  \lim_{t\rightarrow0}\frac{\rho(t)}{t}=
  \lim_{t\rightarrow\infty}\frac{t}{\rho(t)}=0.
\end{align*}
By $\rho^*$ we denote the conjugate N-function of $\rho$, which is
given by $\rho^*(t) = \sup_{s \geq 0} (st - \rho(s))$. Then $\rho^{**}
= \rho$.
\begin{lemma}[Young's inequality]
  \label{lem:young}
  Let $\rho$ be an N-function. Then for all $s,t\geq 0$ we have
  \begin{align*}
    st \leq \rho(s)+\rho^*(t).
  \end{align*}
  If $\Delta_2(\rho,\rho^*)< \infty$, then additionally for all $\delta>0$
  \begin{align*}
    st &\leq \delta\,\rho(s)+c_\delta\,\rho^*(t)\text{ and }
    st \leq c_\delta\,\rho(s)+\delta\,\rho^*(t),
\\
\rho'(s)t &\leq \delta\,\rho(s)+c_\delta\,\rho(t)\text{ and }
\rho'(s)t \leq \delta\,\rho(t)+c_\delta\,\rho(s),
  \end{align*}
  where $c_\delta= c(\delta, \Delta_2(\set{\rho,\rho^*}))$.
\end{lemma}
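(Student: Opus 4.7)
The plan is to reduce all four refined inequalities to the single definitional statement $st \le \rho(s) + \rho^*(t)$, which is immediate from $\rho^*(t) = \sup_{s \ge 0}(st - \rho(s))$, combined with a simple rescaling trick and two elementary properties of N-functions. First, since $\rho$ is convex with $\rho(0)=0$, for every $\epsilon \in [0,1]$ one has $\rho(\epsilon s) \le \epsilon\, \rho(s)$. Second, the hypothesis $\Delta_2(\set{\rho,\rho^*}) < \infty$ yields, by iterating the doubling bound $\rho(2t) \le \Delta_2(\rho)\,\rho(t)$ a finite number $\lceil \log_2(1/\epsilon)\rceil$ of times, an estimate $\rho(t/\epsilon) \le c_\epsilon\,\rho(t)$, and analogously for $\rho^*$, with $c_\epsilon$ depending only on $\epsilon$ and $\Delta_2(\set{\rho,\rho^*})$.

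To obtain the first pair of refinements, I split $st = (\epsilon s)(t/\epsilon)$ and apply the basic Young inequality, then bound $\rho(\epsilon s) \le \epsilon\, \rho(s)$ by convexity and $\rho^*(t/\epsilon) \le c_\epsilon\, \rho^*(t)$ by $\Delta_2(\rho^*)$. Setting $\epsilon := \delta$ (which is no loss of generality since for $\delta \ge 1$ the inequality is trivial from the basic Young bound) yields $st \le \delta\,\rho(s) + c_\delta\,\rho^*(t)$. The symmetric inequality is obtained by interchanging the roles of $s$ and $t$ via $\rho^{**} = \rho$, using convexity on $\rho^*(\epsilon t)$ and $\Delta_2(\rho)$ on $\rho(s/\epsilon)$.

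For the two derivative inequalities the extra ingredient is the bound $\rho^*(\rho'(s)) \le c\, \rho(s)$. This follows from the Fenchel identity $\rho^*(\rho'(s)) = s\rho'(s) - \rho(s)$ together with $s\rho'(s) \sim \rho(s)$ for $\Delta_2$ N-functions: the inequality $\rho(s) = \int_0^s \rho'(\tau)\,d\tau \le s\rho'(s)$ is free from monotonicity of $\rho'$, while $s\rho'(s) \le \rho(2s) - \rho(s) \le (\Delta_2(\rho)-1)\,\rho(s)$ comes from convexity plus $\Delta_2(\rho)$. Given this, I apply the basic Young inequality to $\rho'(s)\,t = (\epsilon\, \rho'(s))(t/\epsilon)$ and estimate $\rho^*(\epsilon\rho'(s)) \le \epsilon\,\rho^*(\rho'(s)) \le c\epsilon\,\rho(s)$ by convexity of $\rho^*$ and the bound just derived, together with $\rho(t/\epsilon) \le c_\epsilon\,\rho(t)$ by $\Delta_2(\rho)$. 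Choosing $\epsilon$ so that $c\epsilon = \delta$ gives $\rho'(s)\,t \le \delta\,\rho(s) + c_\delta\,\rho(t)$. Writing instead $\rho'(s)\,t = (\rho'(s)/\epsilon)(\epsilon t)$ and using convexity on $\rho(\epsilon t)$ together with $\Delta_2(\rho^*)$ on $\rho^*(\rho'(s)/\epsilon)$ produces the companion inequality $\rho'(s)\,t \le \delta\,\rho(t) + c_\delta\,\rho(s)$.

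No serious obstacle is expected; the proof is essentially bookkeeping. The only points to watch are (i) restricting the rescaling parameter $\epsilon$ to $[0,1]$ so that the elementary convexity estimate $\rho(\epsilon\,\cdot) \le \epsilon\,\rho(\cdot)$ applies, and (ii) tracking that the final constants $c_\delta$ indeed depend only on $\delta$ and $\Delta_2(\set{\rho,\rho^*})$, as claimed.
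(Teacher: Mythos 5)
Your argument is correct, and it is the standard one: the paper itself does not prove this lemma but simply cites it as classical (Krasnosel'skij--Rutitskij, Rao--Ren), so there is no in-paper proof to compare against. All the ingredients you use check out --- the rescaling $st=(\epsilon s)(t/\epsilon)$ with $\rho(\epsilon s)\le\epsilon\rho(s)$ from convexity and $\rho(t/\epsilon)\le c_\epsilon\rho(t)$ from finitely many applications of doubling, and for the derivative versions the bound $\rho^*(\rho'(s))\le s\rho'(s)-\rho(s)\le(\Delta_2(\rho)-2)\rho(s)$ via the subgradient inequality $\rho(2s)\ge\rho(s)+s\rho'(s)$.
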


\begin{definition}
  \label{ass:phipp}
  Let $\rho$ be an N-function. 
  We say that $\rho$ is {\em elliptic}, if
  $\rho$ is $C^1$ on $[0,\infty)$ and $C^2$ on $(0,\infty)$ and
  assume that 
  \begin{align}
    \label{eq:phipp}
    \rho'(t) &\sim t\,\rho''(t)
  \end{align}
  uniformly in $t > 0$. The constants hidden in $\sim$ are called the
  {\em characteristics of~$\rho$}.
\end{definition}
Note that~\eqref{eq:phipp} is stronger than
$\Delta_2(\rho,\rho^*)<\infty$. In fact, the $\Delta_2$-constants can
be estimated in terms of the characteristics of~$\rho$.

Associated to an elliptic $N$-function $\rho$ we define the tensors
\begin{align*}
  \bfA^\rho(\bfxi):=\frac{\rho'(\abs{\bfxi})}{\abs{\bfxi}}\bfxi,\quad
  \bfxi\in\mathbb R^{N\times n}\text{ and }
  \bfF^\rho(\bfxi):=\sqrt{\frac{\rho'(\abs{\bfxi})}{\abs{\bfxi}}}\,\bfxi,\quad
  \bfxi\in\mathbb R^{N\times n}.
\end{align*}

We define the {\em shifted} $N$-function $\rho_a$ for $a\geq 0$ by
\begin{align}
  \label{eq:def_shift}
  \rho_a(t) &:= \int_0^t \frac{\rho'(a+\tau)}{a+\tau} \tau\,d\tau.
\end{align}

The following auxiliary result can be found in~\cite{DieE08,DieK08,DieKapSch11,RuzDie07}
and~\cite[Lemma~11]{BelDieKre11}.
\begin{lemma}
  \label{lem:shift_sim}
  For all $a,b, t \geq 0$ we have
  \begin{align*}
    \rho_a(t) &\sim 
    \begin{cases}
      \rho''(a) t^2 &\qquad\text{if $t \lesssim a$}
      \\
      \rho(t) &\qquad\text{if $t \gtrsim a$,}
    \end{cases}
\\
(\rho_a)_b(t)&\sim\rho_{a+b}(t).
  \end{align*}
\end{lemma}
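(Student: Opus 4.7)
The plan is to work directly from the definition $\rho_a(t)=\int_0^t \rho'(a+\tau)\tau/(a+\tau)\,d\tau$ and the ellipticity condition $\rho'(s)\sim s\rho''(s)$. An integration by parts applied to $\rho(t)=\int_0^t\rho'(s)\,ds\sim\int_0^t s\rho''(s)\,ds=t\rho'(t)-\rho(t)$ first yields the standard consequence $\rho(t)\sim t\rho'(t)\sim t^2\rho''(t)$ uniformly in $t>0$, with constants depending only on the characteristics of~$\rho$. This identity is the workhorse for both equivalences.

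For the first (piecewise) equivalence I would case-split on whether $t\lesssim a$ or $t\gtrsim a$. In the regime $t\lesssim a$, for every $\tau\in[0,t]$ one has $a+\tau\sim a$, and so $\rho'(a+\tau)/(a+\tau)\sim \rho'(a)/a\sim \rho''(a)$; integrating the remaining factor $\tau$ on $[0,t]$ yields $\rho_a(t)\sim\rho''(a)t^2$. In the regime $t\gtrsim a$ I would split $\rho_a(t)=\int_0^a+\int_a^t$. The first piece falls under the previous case (taken at the endpoint $t=a$) and contributes $\sim\rho''(a)a^2\sim\rho(a)$. In the second piece, $a+\tau\sim\tau$, so the integrand simplifies to $\sim\rho'(\tau)$ and the integral is $\sim \rho(t)-\rho(a)$. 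Adding the two pieces gives $\rho_a(t)\sim\rho(t)$, since $\rho(a)\le\rho(t)$ by monotonicity.

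For the second equivalence, my proposal is to observe that it in fact holds as an identity. Differentiating the definition once gives $\rho_a'(s)=\rho'(a+s)s/(a+s)$, and therefore
\begin{align*}
  \frac{\rho_a'(b+\tau)}{b+\tau}\;=\;\frac{\rho'(a+b+\tau)}{a+b+\tau}.
\end{align*}
Substituting this relation into the definition $(\rho_a)_b(t)=\int_0^t \rho_a'(b+\tau)\tau/(b+\tau)\,d\tau$ produces exactly $\int_0^t \rho'(a+b+\tau)\tau/(a+b+\tau)\,d\tau=\rho_{a+b}(t)$, which is stronger than the claimed equivalence.

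The only genuine subtlety lies in the first equivalence, in the absorption step $\rho(a)+(\rho(t)-\rho(a))\sim\rho(t)$ for $t\gtrsim a$: if $t$ is only comparable to $a$ the second summand may be small, but then $\rho(a)\sim\rho(t)$ directly, while for $t$ much larger than $a$ the $\Delta_2$-condition (which is a consequence of ellipticity) guarantees $\rho(a)\lesssim\rho(t)$ so that the sum is indeed comparable to $\rho(t)$. The boundary case $t\sim a$ is also consistent across the two regimes because $\rho''(a)t^2\sim\rho''(a)a^2\sim\rho(a)\sim\rho(t)$ by the elliptic identity recalled at the outset. Beyond this case-by-case bookkeeping, no further obstacles arise.
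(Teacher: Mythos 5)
Your argument is correct. Note first that the paper does not prove this lemma at all --- it is quoted from the literature (\cite{DieE08,DieK08,DieKapSch11,RuzDie07} and \cite[Lemma~11]{BelDieKre11}) --- so any complete proof is already ``a different route'' in the sense that the paper offers none. Your treatment of the first equivalence is the standard one and is sound: the identity $\rho(t)\sim t\rho'(t)\sim t^2\rho''(t)$ (hence also the doubling of $\rho'$ and $\rho''$, i.e.\ $\rho''(s)\sim\rho''(s')$ for $s\sim s'$, which you implicitly use when replacing $\rho'(a+\tau)/(a+\tau)$ by $\rho''(a)$) follows from ellipticity and $\Delta_2$, and the split $\int_0^{\min(a,t)}+\int_{\min(a,t)}^t$ together with $a+\tau\sim a$ on the first piece and $a+\tau\sim\tau$ on the second gives exactly the claimed asymptotics; the ``absorption'' you worry about at the end is in fact automatic, since $I_1\sim\rho(a)$ and $I_2\sim\rho(t)-\rho(a)$ with all four quantities nonnegative already yields $I_1+I_2\sim\rho(t)$ exactly. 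The genuinely nice point is your proof of the second equivalence: with the shift defined as in \eqref{eq:def_shift}, the relation $(\rho_a)'(b+\tau)/(b+\tau)=\rho'(a+b+\tau)/(a+b+\tau)$ turns $(\rho_a)_b=\rho_{a+b}$ into an exact identity, which is stronger than the stated $\sim$ and shorter than the usual derivation in the cited references (which typically routes through the characterization $\rho_a(t)\sim\rho''(a+t)\,t^2$). The only cosmetic caveat is the degenerate case $a=0$ in expressions like $\rho'(a)/a$, which you should read as $\rho''(0^+)$ or simply handle by the convention that $t\lesssim 0$ forces $t=0$.
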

\begin{lemma}[{\cite[Lemma~2.3]{DieE08}}]
  \label{lem:hammer}
  We have
  \begin{align*}
    %\label{eq:hammerabc}
    \begin{aligned}
      % \hspace{-.29cm} 
      \big({\bfA^\rho}(\bfP) - {\bfA^\rho}(\bfQ)\big) \cdot
      \big(\bfP-\bfQ \big) &\sim \bigabs{ \bfF^\rho(\bfP) -
        \bfF^\rho(\bfQ)}^2
      \\
%      \label{eq:hammerb}
      &\sim \rho_{\abs{\bfP}}(\abs{\bfP - \bfQ})
      \\
%      \label{eq:hammerc}
      &\sim \rho''\big( \abs{\bfP} + \abs{\bfQ}
      \big)\abs{\bfP - \bfQ}^2
    \end{aligned}
  \end{align*}
  uniformly in $\bfP, \bfQ \in \setR^{n \times N}$.  Moreover,
  uniformly in $\bfQ \in \setR^{n \times N}$,
  \begin{align*}
    %\label{eq:hammerd}
    \bfA^\rho(\bfQ) \cdot \bfQ &\sim \abs{\bfF^\rho(\bfQ)}^2\sim
    \rho(\abs{\bfQ})\\
  \abs{{\bfA^\rho}(\bfP) - {\bfA^\rho}(\bfQ)}&\sim\big(\rho_{\abs{\bfP}}\big)'(\abs{\bfP - \bfQ}).
  \end{align*}
  The constants depend only on the characteristics of $\rho$.
\end{lemma}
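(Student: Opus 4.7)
The plan is to assemble the pieces already developed in the preceding discussion, since each of the three inequalities in Theorem~\ref{thm:frozen} is really a synthesis step.

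\textbf{First inequality.} I would invoke Lemma~\ref{lem:frozencea} with $\bfpsi_h$ ranging over $\bfV_{h,0}$. The residual term there has the form $c\,h^\alpha \bigl(\int_\Omega(1+|\nabla\bfv|^{p(x)s})\,dx\bigr)^{\frac12}$ for some $s>1$ that can be chosen close to $1$ as $h\to 0$. To absorb this into $c\,h^\alpha$ (with a constant depending on $\|\bfF(\cdot,\nabla\bfv)\|_{1,2}$), I would use that $|\bfF(\cdot,\nabla\bfv)|^2\sim(\kappa+|\nabla\bfv|)^{p(x)-2}|\nabla\bfv|^2$ controls $|\nabla\bfv|^{p(x)}$ up to an additive constant (Lemma~\ref{lem:hammer}), and Sobolev embedding $W^{1,2}(\Omega)\hookrightarrow L^{s_0}(\Omega)$ for some $s_0>1$ then yields a small gain of integrability. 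Choosing $s$ close enough to $1$ so that $sp^+\le 2s_0$ makes the remainder bounded by a constant depending only on $\|\bfF(\cdot,\nabla\bfv)\|_{1,2}$ times $h^\alpha$.

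\textbf{Second inequality.} I would simply insert $\bfpsi_h:=\Pi_h\bfv$ into the minimum on the right-hand side of the first inequality and then apply the triangle-inequality decomposition displayed just above the theorem:
\begin{align*}
\bignorm{\bfF_{\mathcal T}(\cdot,\nabla\bfv)-\bfF_{\mathcal T}(\cdot,\nabla\Pi_h\bfv)}_2\le [I]+[II]+[III],
\end{align*}
where $[I]=\|\bfF(\cdot,\nabla\bfv)-\bfF(\cdot,\nabla\Pi_h\bfv)\|_2$ and $[II],[III]$ measure the difference between $\bfF_{\mathcal T}$ and $\bfF$ evaluated at $\nabla\bfv$ and $\nabla\Pi_h\bfv$, respectively. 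This is exactly the splitting already set up in the text.

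\textbf{Third inequality.} I would bound each term of the splitting by $c\,h^\alpha$. For $[I]$ this is the content of Lemma~\ref{lem:app_V2} applied cell by cell and summed (this is effectively what Theorem~\ref{thm:error} proves for the $\bfF$-interpolation error). For $[II]$ and $[III]$ I would use the pointwise estimate
\begin{align*}
\bigabs{\bfF_{\mathcal T}(x,\bfQ)-\bfF(x,\bfQ)}\le c\,h^\alpha\,\abs{\ln(\kappa+|\bfQ|)}\Bigl((\kappa+|\bfQ|)^{\frac{p_{\mathcal T}(x)-2}{2}}+(\kappa+|\bfQ|)^{\frac{p(x)-2}{2}}\Bigr)|\bfQ|
\end{align*}
already recorded in the excerpt; integrating and using the elementary bound $\abs{\ln(\kappa+t)}^2(\kappa+t)^{p(x)-2}t^2\le c(1+t)^{sp(x)}$ for $s>1$ close to $1$ gives $[II]\le c\,h^\alpha(\int_\Omega(1+|\nabla\bfv|)^{sp(x)}\,dx)^{1/2}$ and the analogous bound for $[III]$ with $\nabla\Pi_h\bfv$. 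The stability of $\Pi_h$ in Lemma~\ref{thm:stabo} (used with $a=0$ and exponent $sp(\cdot)$) reduces the $\Pi_h\bfv$-integral to the corresponding one for $\nabla\bfv$, which by the previous Sobolev-embedding argument is controlled by $\|\bfF(\cdot,\nabla\bfv)\|_{1,2}$.

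The main obstacle here is not in the assembly itself but in the two higher-integrability points: first, justifying that the exponent $s>1$ in Lemma~\ref{lem:frozencea} can be chosen close enough to $1$ that $\int|\nabla\bfv|^{sp(\cdot)}\,dx$ is finite in terms of the natural energy $\|\bfF(\cdot,\nabla\bfv)\|_{1,2}$; and second, invoking stability of $\Pi_h$ in the variable exponent $sp(\cdot)$ rather than in $p(\cdot)$, which is legitimate because $sp\in\PPln$ inherits the log-H{\"o}lder constant of $p$ up to a factor $s$ and $(sp)^+<\infty$, so Lemma~\ref{thm:stabo} applies.
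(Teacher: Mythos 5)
Your proposal does not address the statement at hand. The statement to be proved is Lemma~\ref{lem:hammer}: the pointwise equivalences
\begin{align*}
\big(\bfA^\rho(\bfP)-\bfA^\rho(\bfQ)\big)\cdot(\bfP-\bfQ)\sim\bigabs{\bfF^\rho(\bfP)-\bfF^\rho(\bfQ)}^2\sim\rho_{\abs{\bfP}}(\abs{\bfP-\bfQ})\sim\rho''(\abs{\bfP}+\abs{\bfQ})\abs{\bfP-\bfQ}^2,
\end{align*}
together with $\bfA^\rho(\bfQ)\cdot\bfQ\sim\abs{\bfF^\rho(\bfQ)}^2\sim\rho(\abs{\bfQ})$ and $\abs{\bfA^\rho(\bfP)-\bfA^\rho(\bfQ)}\sim(\rho_{\abs{\bfP}})'(\abs{\bfP-\bfQ})$, for an elliptic N-function $\rho$. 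This is a purely algebraic/analytic fact about N-functions satisfying $\rho'(t)\sim t\,\rho''(t)$; a proof would proceed via the integral representation $\bfA^\rho(\bfP)-\bfA^\rho(\bfQ)=\int_0^1\tfrac{d}{d\theta}\bfA^\rho(\bfQ+\theta(\bfP-\bfQ))\,d\theta$, monotonicity of $\rho'(t)/t$, and the comparison $\rho_a(t)\sim\rho''(a)t^2$ for $t\lesssim a$ from Lemma~\ref{lem:shift_sim}. (In the paper this lemma is not proved at all but quoted from the literature, namely Lemma~2.3 of the reference on fractional estimates for non-differentiable elliptic systems.)

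What you have written instead is an assembly argument for Theorem~\ref{thm:frozen} (the convergence rate for the frozen-exponent scheme), built from Lemma~\ref{lem:frozencea}, the splitting into $[I]$, $[II]$, $[III]$, and the stability of $\Pi_h$. None of the objects relevant to Lemma~\ref{lem:hammer} — the ellipticity condition~\eqref{eq:phipp}, the shifted function $\rho_a$, or the vector fields $\bfA^\rho$ and $\bfF^\rho$ as abstract N-function quantities — is engaged. Since the statement you were asked to prove is a prerequisite used repeatedly in the arguments you describe (you even invoke it in your own first step), the proposal is circular with respect to the actual target and constitutes no proof of it. You need to start over with the N-function machinery of the appendix.
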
 
\begin{lemma}[Change of Shift]
  \label{lem:shift_ch}
  Let $\rho$ be an elliptic N-function. Then for each $\delta>0$ there
  exists $C_\delta \geq 1$ (only depending on~$\delta$ and the
  characteristics of~$\rho$) such that
  \begin{align*}
    \rho_{\abs{\bfa}}(t)&\leq C_\delta\, \rho_{\abs{\bfb}}(t)
    +\delta\, \rho_{\abs{\bfa}}(\abs{\bfa - \bfb}),
    \\
    (\rho_{\abs{\bfa}})^*(t)&\leq C_\delta\, (\rho_{\abs{\bfb}})^*(t)
    +\delta\, \rho_{\abs{\bfa}}(\abs{\bfa - \bfb}),
  \end{align*}
%   \begin{align*}
%     \rho_{\abs{\bfa}}(t)&\leq C_\delta\, \rho_{\abs{\bfb}}(t)
%     +\delta\, \abs{\bfF(\bfa) - \bfF(\bfb)}^2,
%     \\
%     (\rho_{\abs{\bfa}})^*(t)&\leq C_\delta\, (\rho_{\abs{\bfb}})^*(t)
%     +\delta\, \abs{\bfF(\bfa) - \bfF(\bfb)}^2
%   \end{align*}
  for all $\bfa,\bfb\in\setR^d$ and $t\geq0$.
\end{lemma}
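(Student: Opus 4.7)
I would prove both inequalities by a parallel case analysis, relying on the pointwise equivalence $\rho_a(t) \sim \rho''(a+t)\,t^2$ (a consequence of Lemma~\ref{lem:shift_sim} and the ellipticity of~$\rho$) together with the uniform $\Delta_2$-control of $\rho$ and $\rho^*$ implied by the characteristics. Setting $a := \abs{\bfa}$, $b := \abs{\bfb}$ and $d := \abs{\bfa-\bfb}$, the triangle inequality gives $d \ge \abs{a-b}$, as well as $a \le b + d$ and $b \le a + d$.

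For the first inequality, I would distinguish according to whether the scales $a+t$ and $b+t$ are comparable. In the comparable regime (ratio bounded by, say, $2$), ellipticity gives $\rho''(a+t) \sim \rho''(b+t)$ with a universal constant, hence $\rho_a(t) \le c\,\rho_b(t)$ and this $c$ is absorbed into~$C_\delta$. In the incomparable regime one has $d \gtrsim \max\set{a+t,b+t}$ up to universal constants, and ellipticity then yields $\rho_a(d) \sim \rho''(a+d)\,d^2 \gtrsim \rho(a+t) \gtrsim \rho''(a+t)\,(a+t)^2 \ge \rho_a(t)$. To turn the resulting universal constant into the small $\delta$, I would introduce an auxiliary threshold $a+t \le \eta\,(b+t)$ with $\eta = \eta(\delta)$ chosen small (and its symmetric analogue). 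Below the threshold, the $\Delta_2$-control of $\rho$ and smallness of~$\eta$ give $\rho_a(t) \le \rho(a+t) \le \delta\,\rho_a(d)$. Above the threshold, the quotient $(a+t)/(b+t) \in [\eta,1/\eta]$ is bounded by a $\delta$-dependent constant, so $\Delta_2$-control forces $\rho''(a+t) \le C_\delta\,\rho''(b+t)$ and therefore $\rho_a(t) \le C_\delta\,\rho_b(t)$.

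The second inequality I would reduce to the first via the duality $(\rho_a)^*(t) \sim (\rho^*)_{\rho'(a)}(t)$, valid uniformly in $a \ge 0$ with constants depending only on the characteristics of~$\rho$. Applying the first inequality to the elliptic $N$-function $\rho^*$ with shifts $\rho'(\abs{\bfa}),\rho'(\abs{\bfb})$ and translating back, the only remaining step is to absorb $\delta\,(\rho_{\abs{\bfa}})^*(\abs{\rho'(\abs{\bfa})-\rho'(\abs{\bfb})})$ into $\delta\,\rho_{\abs{\bfa}}(\abs{\bfa-\bfb})$. This uses the general pointwise bound $(\rho_a)^*(\abs{\rho'(a)-\rho'(b)}) \lesssim \rho_a(\abs{a-b})$, a consequence of the Young saturation $(\rho_a)^*(\rho_a'(s)) \sim \rho_a(s)$ (Lemma~\ref{lem:young}) combined with the mean-value estimate $\abs{\rho'(a)-\rho'(b)} \lesssim \rho_a'(\abs{a-b})$.

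The main technical obstacle lies in the bookkeeping inside the incomparable regime: the threshold $\eta(\delta)$ must be tuned so that the below-threshold estimate produces exactly the small constant $\delta$ on $\rho_a(d)$, while the above-threshold estimate feeds into a $\delta$-dependent~$C_\delta$ that inevitably blows up as $\delta \to 0$. Every subcase on the relative sizes of $t,a,b,d$ must be cross-checked against this threshold, which is where the bulk of the case analysis lives.
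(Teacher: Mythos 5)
The paper does not prove this lemma at all: it is quoted in the appendix as a known auxiliary result from the literature (\cite{DieE08,DieK08,RuzDie07} and \cite[Lemma~11]{BelDieKre11}), so there is no in-paper argument to compare against. Your sketch is, in substance, the standard proof from those references, and it is correct. The decisive ingredients are all present: the unified equivalence $\rho_a(t)\sim\rho''(a+t)\,t^2$, the $\delta$-dependent threshold $\eta(\delta)$ separating the regime $\abs{\bfa}+t\sim_\eta\abs{\bfb}+t$ (which yields the $C_\delta\,\rho_{\abs{\bfb}}(t)$ term via $\rho''(\lambda s)\sim_\lambda\rho''(s)$) from the two extreme regimes (where $\abs{\bfa-\bfb}\gtrsim\eta^{-1}(\abs{\bfa}+t)$ or $t\lesssim\eta\,\abs{\bfa}\lesssim\eta\,\abs{\bfa-\bfb}$ force $\rho_{\abs{\bfa}}(t)\le\delta\,\rho_{\abs{\bfa}}(\abs{\bfa-\bfb})$ by convexity of $\rho_{\abs{\bfa}}$); and for the dual estimate, the reduction via $(\rho_a)^*\sim(\rho^*)_{\rho'(a)}$ (Lemma~\ref{lem:shifted2}) followed by the absorption $(\rho_a)^*\big(\abs{\rho'(a)-\rho'(b)}\big)\lesssim\rho_a(\abs{a-b})$, which is exactly the scalar case of $\abs{\bfA^\rho(\bfP)-\bfA^\rho(\bfQ)}\sim(\rho_{\abs{\bfP}})'(\abs{\bfP-\bfQ})$ from Lemma~\ref{lem:hammer} combined with $\psi^*(\psi'(s))\sim\psi(s)$. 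You correctly identify and repair the weakness of the naive two-case split (which only produces a universal constant in front of $\rho_{\abs{\bfa}}(\abs{\bfa-\bfb})$).

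Two points deserve explicit mention in a written-out version: first, that $\rho^*$ is again elliptic with characteristics controlled by those of $\rho$ (this is what licenses applying the first inequality to $\rho^*$), and second, that the mean-value bound $\abs{\rho'(a)-\rho'(b)}\lesssim\rho_a'(\abs{a-b})$ cannot be obtained from $\sup\rho''$ on $[\min(a,b),\max(a,b)]$ (which may be infinite for degenerate $\rho$) but must be taken from Lemma~\ref{lem:hammer}, as you indicate. With those caveats the argument is complete.
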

The case $\bfa=0$ or $\bfb=0$ implies the following corollary.
\begin{corollary}[Removal of Shift]
  \label{cor:shift_ch}
  Let $\rho$ be an elliptic N-function. Then for each $\delta>0$ there
  exists $C_\delta \geq 1$ (only depending on~$\delta$ and the
  characteristics of~$\rho$) such that
  \begin{align*}
    \rho_{\abs{\bfa}}(t)&\leq C_\delta\, \rho(t) +\delta\,
    \rho(\abs{\bfa}),
    \\
    \rho(t)&\leq C_\delta\, \rho_{\abs{\bfa}}(t) +\delta\,
    \rho(\abs{\bfa}),
  \end{align*}
%   \begin{align*}
%     \rho_{\abs{\bfa}}(t)&\leq C_\delta\, \rho_{\abs{\bfb}}(t)
%     +\delta\, \abs{\bfF(\bfa) - \bfF(\bfb)}^2,
%     \\
%     (\rho_{\abs{\bfa}})^*(t)&\leq C_\delta\, (\rho_{\abs{\bfb}})^*(t)
%     +\delta\, \abs{\bfF(\bfa) - \bfF(\bfb)}^2
%   \end{align*}
  for all $\bfa\in\setR^d$ and $t\geq0$.
\end{corollary}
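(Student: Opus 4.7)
The plan is to deduce Corollary~\ref{cor:shift_ch} directly from Lemma~\ref{lem:shift_ch} by specializing one of the two vectors to zero. Two elementary observations make the reduction clean: first, $\rho_0 = \rho$, which follows at once from the definition $\rho_a(t) = \int_0^t \rho'(a+\tau)\tau/(a+\tau)\,d\tau$ evaluated at $a=0$; and second, $\rho_{\abs{\bfa}}(\abs{\bfa}) \sim \rho(\abs{\bfa})$ with constants depending only on the characteristics of $\rho$, which is the regime $t \gtrsim a$ of Lemma~\ref{lem:shift_sim} applied at $t = a = \abs{\bfa}$.

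For the second inequality of the corollary I would put $\bfa = 0$ into the first inequality of Lemma~\ref{lem:shift_ch} and rename the surviving vector as $\bfa$. This immediately yields $\rho(t) = \rho_0(t) \le C_\delta\, \rho_{\abs{\bfa}}(t) + \delta\, \rho_0(\abs{0-\bfa}) = C_\delta\, \rho_{\abs{\bfa}}(t) + \delta\, \rho(\abs{\bfa})$, which is exactly the stated inequality without any rescaling. For the first inequality I would instead put $\bfb = 0$, producing $\rho_{\abs{\bfa}}(t) \le C_\delta\, \rho(t) + \delta\, \rho_{\abs{\bfa}}(\abs{\bfa})$. Observation (ii) bounds the last term by $C\,\rho(\abs{\bfa})$ with $C$ depending only on the characteristics of $\rho$, so I obtain $\rho_{\abs{\bfa}}(t) \le C_\delta\, \rho(t) + C\,\delta\, \rho(\abs{\bfa})$. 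Because $\delta > 0$ is arbitrary, the trivial rescaling $\delta \mapsto \delta/C$ absorbs the constant and yields the claim with a new $C_\delta$ still depending only on $\delta$ and the characteristics of $\rho$.

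There is essentially no obstacle here: the corollary is a one-line specialization of Lemma~\ref{lem:shift_ch} combined with the identification $\rho_0 = \rho$; the only bookkeeping is routing the constant produced by $\rho_{\abs{\bfa}}(\abs{\bfa}) \sim \rho(\abs{\bfa})$ into the arbitrary parameter $\delta$ via the rescaling described above. No use of Young's inequality, the conjugate N-function, or the second inequality of Lemma~\ref{lem:shift_ch} is required.
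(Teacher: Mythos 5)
Your proposal is correct and is exactly the paper's intended argument: the paper derives the corollary with the one-line remark that it follows from Lemma~\ref{lem:shift_ch} by taking $\bfa=0$ or $\bfb=0$, which is precisely your specialization, with the identification $\rho_0=\rho$ and the rescaling of $\delta$ via $\rho_{\abs{\bfa}}(\abs{\bfa})\sim\rho(\abs{\bfa})$ being the routine bookkeeping the paper leaves implicit.
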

\begin{lemma}
  \label{lem:shifted2}
  Let $\rho$ be an elliptic N-function. Then $(\rho_a)^*(t) \sim
  (\rho^*)_{\rho'(a)}(t)$ uniformly in $a,t \geq 0$. Moreover, for all
  $\lambda \in [0,1]$ we have
  \begin{align*}
    \rho_a(\lambda a) &\sim \lambda^2 \rho(a) \sim
    (\rho_a)^*(\lambda \rho'(a)).
  \end{align*}
\end{lemma}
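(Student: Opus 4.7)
My plan is to exploit the two-regime description of shifted $N$-functions provided by Lemma~\ref{lem:shift_sim}, together with the standard duality identities $\psi^*(s)\sim s(\psi')^{-1}(s)$ and $\psi^{*''}(\psi'(a))\sim 1/\psi''(a)$ valid for any elliptic $N$-function~$\psi$ with uniform constants. Crucially, one first notes that ellipticity of $\rho$ transfers to $\rho_a$: differentiating the definition and using $\rho'(s)\sim s\rho''(s)$ gives $(\rho_a)''(t)\sim\rho''(a+t)$ and $(\rho_a)'(t)\sim t\rho''(a+t)$, so $\rho_a$ is elliptic with characteristics independent of~$a$. In particular its conjugate $(\rho_a)^*$ is also elliptic with uniform characteristics, and the auxiliary duality identities apply uniformly in $a$.

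For the first equivalence $(\rho_a)^*(t)\sim(\rho^*)_{\rho'(a)}(t)$, set $b:=\rho'(a)$. I match the two sides regime by regime. From Lemma~\ref{lem:shift_sim}, $(\rho^*)_b(s)\sim \rho^{*''}(b)\,s^2$ for $s\lesssim b$ and $(\rho^*)_b(s)\sim\rho^*(s)$ for $s\gtrsim b$; using $\rho^{*''}(b)\sim 1/\rho''(a)$ this gives $(\rho^*)_b(s)\sim s^2/\rho''(a)$ in the lower regime. For the left side I use $s(\rho_a^*)'(s)\sim(\rho_a)^*(s)$ together with $(\rho_a^*)'=((\rho_a)')^{-1}$. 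Since $(\rho_a)'(t)\sim\rho''(a)t$ for $t\lesssim a$, inversion gives $((\rho_a)')^{-1}(s)\sim s/\rho''(a)$ for $s\lesssim a\rho''(a)\sim b$, and hence $(\rho_a)^*(s)\sim s^2/\rho''(a)$ in this regime; in the upper regime $(\rho_a)'(t)\sim\rho'(t)$ yields $(\rho_a)^*(s)\sim\rho^*(s)$. The two descriptions coincide, so the first equivalence follows.

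For the second statement, since $\lambda a\leq a$, Lemma~\ref{lem:shift_sim} gives directly $\rho_a(\lambda a)\sim\rho''(a)(\lambda a)^2=\lambda^2 a^2\rho''(a)$. Evaluating Lemma~\ref{lem:shift_sim} in both regimes at $t=a$ shows $\rho(a)\sim a^2\rho''(a)$, so $\rho_a(\lambda a)\sim\lambda^2\rho(a)$. The third equivalence is obtained by combining the previous ones: by part~1, $(\rho_a)^*(\lambda\rho'(a))\sim(\rho^*)_{\rho'(a)}(\lambda\rho'(a))$. Since $\rho^*$ is itself an elliptic $N$-function and $\lambda\in[0,1]$, applying the already-proven identity $\rho_a(\lambda a)\sim\lambda^2\rho(a)$ to $\rho^*$ with shift $b=\rho'(a)$ yields $(\rho^*)_b(\lambda b)\sim\lambda^2\rho^*(b)$. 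Finally the Legendre identity $\rho^*(\rho'(a))=a\rho'(a)-\rho(a)$ combined with $a\rho'(a)\sim\rho(a)$ (a consequence of ellipticity and $\Delta_2$) gives $\rho^*(\rho'(a))\sim\rho(a)$, closing the chain.

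The only delicate step is verifying the first equivalence across the transition $s\sim b=\rho'(a)$; here uniformity of the hidden constants in $a$ is essential, but it is guaranteed because both $\rho_a$ and $(\rho^*)_b$ share characteristics independent of $a$, so the regime descriptions glue together smoothly. Everything else is a direct application of the previously listed lemmas together with standard elliptic $N$-function duality.
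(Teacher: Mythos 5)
Your proof is correct, but note that the paper itself offers no proof of Lemma~\ref{lem:shifted2}: like the other statements in the appendix, it is imported from the cited literature on shifted N-functions (Diening--Ettwein, Diening--Kreuzer, R{\r u}{\v z}i{\v c}ka--Diening, Belenki--Diening--Kreuzer), so there is nothing in the text to compare against. Your argument is essentially the standard one from those references: first show $(\rho_a)''(t)\sim\rho''(a+t)$ so that $\rho_a$ is elliptic uniformly in $a$, then match $(\rho_a)^*$ and $(\rho^*)_{\rho'(a)}$ regime by regime across the transition $s\sim\rho'(a)$ using $(\rho^*)''(\rho'(a))=1/\rho''(a)$ and $\psi^*(s)\sim s(\psi')^{-1}(s)$, and finally bootstrap the second chain of equivalences from Lemma~\ref{lem:shift_sim} applied to $\rho$ and to $\rho^*$. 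The one step worth making explicit is the last one: $\rho^*(\rho'(a))=a\rho'(a)-\rho(a)\sim\rho(a)$ is not a consequence of $a\rho'(a)\sim\rho(a)$ alone (the difference could a priori degenerate); you need the quantified gap $\rho(a)\le\theta\,a\rho'(a)$ with $\theta<1$, which does follow from the lower bound $t\rho''(t)\gtrsim\rho'(t)$ in \eqref{eq:phipp} by integrating $(\log\rho')'\gtrsim 1/t$. With that spelled out, the proof is complete and the uniformity in $a$ is correctly tracked throughout.
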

\begin{lemma}
  \label{lem:shiftedindex}
  Let $\rho(t) := \int_0^t (\kappa+s)^{q-2} s\,ds$ with $q \in
  (1,\infty)$ and $t\geq 0$. Then 
  \begin{align*}
    \rho_a(\lambda t) &\leq c\, \max\set{\lambda^q, \lambda^2}
    \rho(t),
    \\
    (\rho_a)^*(\lambda t) &\leq c\, \max\set{\lambda^{q'}, \lambda^2}
    \rho(t)
  \end{align*}
  uniformly in $a,\lambda \geq 0$.
\end{lemma}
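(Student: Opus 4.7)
The plan is to reduce both estimates to direct power computations using the explicit form of $\rho$. The key preliminary observation is the uniform pointwise equivalence
\[
\rho_a(t) \sim (\kappa + a + t)^{q-2}\, t^2, \qquad a,t \geq 0,
\]
with constants depending only on $q$. This follows from Lemma~\ref{lem:shift_sim} together with the elementary identities $\rho(t) \sim (\kappa+t)^{q-2}\,t^2$ and $\rho''(s) \sim (\kappa+s)^{q-2}$, obtained by differentiating the definition of $\rho$. I will read the right-hand sides in the lemma in the natural shifted form $\rho_a(t)$ and $(\rho_a)^*(t)$, since the application absorbs the shift into $\rho''(a)$-type terms or invokes Lemma~\ref{lem:shifted2} when required.

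For the first inequality, the equivalence yields
\[
\frac{\rho_a(\lambda t)}{\rho_a(t)} \sim \lambda^2 \left( \frac{\kappa+a+\lambda t}{\kappa+a+t} \right)^{q-2}.
\]
The key step is the elementary sandwich
\[
\min\{1,\lambda\}\,(\kappa+a+t) \;\leq\; \kappa+a+\lambda t \;\leq\; \max\{1,\lambda\}\,(\kappa+a+t),
\]
which confines the inner ratio $r$ to $[\min\{1,\lambda\},\max\{1,\lambda\}]$. Regardless of the sign of $q-2$, raising to the $(q-2)$-th power gives the crude bound $r^{q-2}\le \max\{1,\lambda^{q-2}\}$, and therefore $\lambda^2 r^{q-2}\le\max\{\lambda^2,\lambda^q\}$, which finishes this line.

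For the conjugate, I would combine Lemma~\ref{lem:shifted2}, which provides $(\rho_a)^*(t) \sim (\rho^*)_{\rho'(a)}(t)$, with the identification of $\rho^*$ as an $N$-function of the same parametric type as $\rho$ but with $q$ replaced by the conjugate exponent $q'$. A direct computation via inversion of $\rho'$ yields $\rho^*(s) \sim (\tilde\kappa + s)^{q'-2}\,s^2$ with $\tilde\kappa := \kappa^{q-1}$; combined with $\rho'(a) = (\kappa+a)^{q-2} a$ and the preparatory step applied to $\rho^*$, one obtains the analogous equivalence
\[
(\rho_a)^*(t) \sim \bigl((\kappa+a)^{q-1}+t\bigr)^{q'-2}\,t^2,
\]
where the combined parameter simplifies after noting $\kappa^{q-1}+(\kappa+a)^{q-2}a \sim (\kappa+a)^{q-1}$. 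The ratio argument of the previous paragraph, now applied with exponent $q'$ in place of $q$ and effective shift $(\kappa+a)^{q-1}$, then yields the claimed bound with $\max\{\lambda^{q'},\lambda^2\}$.

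The main technical obstacle is the bookkeeping for the conjugate: one must verify that $\rho^*$ is equivalent, uniformly in $\kappa \geq 0$, to an $N$-function of the same parametric form with exponent $q'$, and track how the shift parameter transforms under conjugation from $\rho'(a)$ to an expression controlled by $(\kappa+a)^{q-1}$. This is transparent when $\kappa = 0$ (exact power computation, $\rho(t)=t^q/q$ and $\rho^*(t) = t^{q'}/q'$) and follows in general from the equivalences for elliptic $N$-functions recalled in Section~\ref{sec:Orlicz spaces}, but the explicit identification of $\tilde\kappa$ has to be carried out to ensure the constants depend only on $q$.
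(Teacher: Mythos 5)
Your proof is correct. The paper itself gives no proof of this lemma: it is stated in the appendix among the standard shift estimates recalled from the cited literature, so there is no in-paper argument to compare against, and the route you take is in fact the standard one --- reduce to the explicit equivalence $\rho_a(t)\sim(\kappa+a+t)^{q-2}t^2$ (immediate here, since $\rho_a'(\tau)=(\kappa+a+\tau)^{q-2}\tau$ shows $\rho_a$ is just $\rho$ with $\kappa$ replaced by $\kappa+a$) and then use the sandwich $\min\{1,\lambda\}(\kappa+a+t)\le\kappa+a+\lambda t\le\max\{1,\lambda\}(\kappa+a+t)$. Your decision to read the right-hand sides as $\rho_a(t)$ and $(\rho_a)^*(t)$ is the correct repair of what is evidently a typo: as printed, the first inequality already fails for $\lambda=1$, $q>2$ and $a\to\infty$, the second compares a conjugate function to $\rho$ rather than $\rho^*$, and the way the lemma is invoked in the estimate of $(II)_1$ --- pulling the factor $1+\abs{\ln(\kappa+\abs{\nabla\bfv})}\ge 1$ out of $\big((\phi^K)_{\abs{\nabla\bfv}}\big)^*$ before applying Lemma~\ref{lem:shifted2} --- requires exactly the shifted form on both sides, and for $\lambda\ge1$ as well as $\lambda\le1$. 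Your treatment of the conjugate also checks out: $\rho^*(s)\sim s(\rho')^{-1}(s)\sim(\kappa^{q-1}+s)^{q'-2}s^2$, and $\kappa^{q-1}+\rho'(a)\sim(\kappa+a)^{q-1}$ with constants depending only on $q$ (seen by treating $a\le\kappa$ and $a\ge\kappa$ separately), after which $(\rho_a)^*(t)\sim\big((\kappa+a)^{q-1}+t\big)^{q'-2}t^2$ and the same sandwich argument with $q'$ in place of $q$ closes the proof.
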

\begin{remark}
  Let $p\in\PP(\Omega)$ with $p^{-}>1$ and $p^+<\infty$.  The
  results above extend to the function $\phi(x,t)=\int_0^t
  (\kappa+s)^{p(x)-2}s\,ds$ uniformly in $x\in\Omega$, where the
  constants only depend on $p^-$ and $p^+$.
\end{remark}

\newcommand{\etalchar}[1]{$^{#1}$}
\def\polhk#1{\setbox0=\hbox{#1}{\ooalign{\hidewidth
  \lower1.5ex\hbox{`}\hidewidth\crcr\unhbox0}}}
  \def\ocirc#1{\ifmmode\setbox0=\hbox{$#1$}\dimen0=\ht0 \advance\dimen0
  by1pt\rlap{\hbox to\wd0{\hss\raise\dimen0
  \hbox{\hskip.2em$\scriptscriptstyle\circ$}\hss}}#1\else {\accent"17 #1}\fi}
  \def\ocirc#1{\ifmmode\setbox0=\hbox{$#1$}\dimen0=\ht0 \advance\dimen0
  by1pt\rlap{\hbox to\wd0{\hss\raise\dimen0
  \hbox{\hskip.2em$\scriptscriptstyle\circ$}\hss}}#1\else {\accent"17 #1}\fi}
  \def\ocirc#1{\ifmmode\setbox0=\hbox{$#1$}\dimen0=\ht0 \advance\dimen0
  by1pt\rlap{\hbox to\wd0{\hss\raise\dimen0
  \hbox{\hskip.2em$\scriptscriptstyle\circ$}\hss}}#1\else {\accent"17 #1}\fi}
  \def\ocirc#1{\ifmmode\setbox0=\hbox{$#1$}\dimen0=\ht0 \advance\dimen0
  by1pt\rlap{\hbox to\wd0{\hss\raise\dimen0
  \hbox{\hskip.2em$\scriptscriptstyle\circ$}\hss}}#1\else {\accent"17 #1}\fi}
  \def\cprime{$'$}
\providecommand{\bysame}{\leavevmode\hbox to3em{\hrulefill}\thinspace}
\providecommand{\MR}{\relax\ifhmode\unskip\space\fi MR }
% \MRhref is called by the amsart/book/proc definition of \MR.
\providecommand{\MRhref}[2]{%
  \href{http://www.ams.org/mathscinet-getitem?mr=#1}{#2}
}
\providecommand{\href}[2]{#2}

% \bibliographystyle{amsalpha}
% \bibliography{lars}

\begin{thebibliography}{LMDPM12}

\bibitem[AM01]{AceM01}
E.~Acerbi and G.~Mingione, \emph{Regularity results for a class of functionals
  with non-standard growth}, Arch. Ration. Mech. Anal. \textbf{156} (2001),
  no.~2, 121--140.

\bibitem[AM05]{AceM05}
\bysame, \emph{Gradient estimates for the {$p(x)$}-{L}aplacean system}, J.
  Reine Angew. Math. \textbf{584} (2005), 117--148.

\bibitem[BCE{\etalchar{+}}09]{BolChaEseSchVix09}
E.~M. Bollt, R.~Chartrand, S.~Esedo{\=g}lu, P.~Schultz, and K.~R. Vixie,
  \emph{Graduated adaptive image denoising: local compromise between total
  variation and isotropic diffusion}, Adv. Comput. Math. \textbf{31} (2009),
  no.~1-3, 61--85.

\bibitem[BDK12]{BelDieKre11}
L.~Belenki, L.~Diening, and Ch. Kreuzer, \emph{Optimality of an adaptive finite
  element method for the $p$-{L}aplacian equation}, IMA J. Numer. Anal.
  \textbf{32} (2012), no.~2, 484--510.

\bibitem[BL94]{BarL94}
J.~W. Barrett and W.~B. Liu, \emph{Finite element approximation of degenerate
  quasilinear elliptic and parabolic problems}, Numerical analysis 1993
  (Dundee, 1993), Pitman Res. Notes Math. Ser., vol. 30x3, Longman Sci. Tech.,
  Harlow, 1994, pp.~1--16.

\bibitem[CHP10]{CarHaePro10}
E.~Carelli, J.~Haehnle, and A.~Prohl, \emph{Convergence analysis for
  incompressible generalized {N}ewtonian fluid flows with nonstandard
  anisotropic growth conditions}, SIAM J. Numer. Anal. \textbf{48} (2010),
  no.~1, 164--190.

\bibitem[Cia78]{ciarlet}
Philippe~G. Ciarlet, \emph{The finite element method for elliptic problems},
  North-Holland Publishing Co., Amsterdam, 1978, Studies in Mathematics and its
  Applications, Vol. 4.

\bibitem[Cl{\'e}75]{Cle75}
Ph. Cl{\'e}ment, \emph{Approximation by finite element functions using local
  regularization}, RAIRO Analyse Num\'erique \textbf{9} (1975), no.~R-2,
  77--84.

\bibitem[CLR06]{CheLevRao06}
Y.~Chen, S.~Levine, and M.~Rao, \emph{Variable exponent, linear growth
  functionals in image restoration}, SIAM J. Appl. Math. \textbf{66} (2006),
  no.~4, 1383--1406 (electronic).

\bibitem[CUF13]{CruFio13}
D.~Cruz-Uribe and A.~Fiorenza, \emph{Variable {L}ebesgue spaces},
  Birkh{\"a}user GmbH, 2013.

\bibitem[CUFN03]{CruFN03}
D.~Cruz-Uribe, A.~Fiorenza, and C.~J. Neugebauer, \emph{The maximal function on
  variable {$L\sp p$} spaces}, Ann. Acad. Sci. Fenn. Math. \textbf{28} (2003),
  no.~1, 223--238.

\bibitem[DE08]{DieE08}
L.~Diening and F.~Ettwein, \emph{Fractional estimates for non-differentiable
  elliptic systems with general growth}, Forum Mathematicum \textbf{20} (2008),
  no.~3, 523--556.

\bibitem[DHH{\etalchar{+}}09]{DieHHMS09}
L.\ Diening, P.\ Harjulehto, P.\ H\"{a}st\"{o}, Y.\ Mizuta, and T.\ Shimomura,
  \emph{{ Maximal functions in variable exponent spaces: limiting cases of the
  exponent}}, Ann.\ Acad.\ Sci.\ Fenn.\ Math. \textbf{34} (2009), 503--522.

\bibitem[DHHR11]{DieHHR11}
L.~Diening, P.~Harjulehto, P.~H{\"a}st{\"o}, and M.~R{\r u}{\v z}i{\v c}ka,
  \emph{Lebesgue and sobolev spaces with variable exponents}, 1st ed., Lecture
  Notes in Mathematics, vol. 2017, Springer, 2011.

\bibitem[Die02]{DiePhd}
L.~Diening, \emph{{Theoretical and numerical results for electrorheological
  fluids.}}, Ph.D. thesis, {Univ. Freiburg im Breisgau, Mathematische
  Fakult{\"a}t, 156 p. }, 2002.

\bibitem[Die04]{Die04}
\bysame, \emph{Maximal function on generalized {L}ebesgue spaces {$L\sp
  {p(\cdot)}$}}, Math. Inequal. Appl. \textbf{7} (2004), no.~2, 245--253.

\bibitem[DK08]{DieK08}
L.~Diening and Ch. Kreuzer, \emph{Linear convergence of an adaptive finite
  element method for the $p$-{L}aplacian equation}, SIAM J. Numer. Anal.
  \textbf{46} (2008), 614--638.

\bibitem[DKS12]{DieKapSch11}
L.~Diening, P.~Kaplick{\'y}, and S.~Schwarzacher, \emph{B{MO} estimates for the
  {$p$}-{L}aplacian}, Nonlinear Anal. \textbf{75} (2012), no.~2, 637--650.

\bibitem[DR07]{DieR07}
L.~Diening and M.~R{\r u}{\v z}i{\v c}ka, \emph{Interpolation operators in
  {O}rlicz~{S}obolev spaces}, Num. Math. \textbf{107} (2007), no.~1, 107--129.

\bibitem[DS13]{DieSch13key}
L.~Diening and S.~Schwarzacher, \emph{On the key estimate for variable exponent
  spaces}, Azerbaijan Journal of Mathematics \textbf{3} (2013), no.~2, 75--82.

\bibitem[Ebm01]{Ebm01poly}
Carsten Ebmeyer, \emph{Nonlinear elliptic problems with {$p$}-structure under
  mixed boundary value conditions in polyhedral domains}, Adv. Differential
  Equations \textbf{6} (2001), no.~7, 873--895.

\bibitem[EF01]{EbmFre01}
C.~Ebmeyer and J.~Frehse, \emph{Mixed boundary value problems for nonlinear
  elliptic equations with {$p$}-structure in nonsmooth domains}, Differential
  Integral Equations \textbf{14} (2001), no.~7, 801--820.

\bibitem[EL05]{EbmLiu05}
Carsten Ebmeyer and WB. Liu, \emph{Quasi-norm interpolation error estimates for
  the piecewise linear finite element approximation of {$p$}-{L}aplacian
  problems}, Numer. Math. \textbf{100} (2005), no.~2, 233--258.

\bibitem[ELS05]{EbmLiuSte05}
C.~Ebmeyer, W.~B. Liu, and M.~Steinhauer, \emph{Global regularity in fractional
  order {S}obolev spaces for the {$p$}-{L}aplace equation on polyhedral
  domains}, Z. Anal. Anwendungen \textbf{24} (2005), no.~2, 353--374.

\bibitem[HHLN10]{HarHasLeNuo10}
P.~Harjulehto, P.~H{\"a}st{\"o}, {\'U}t~V. L{\^e}, and M.~Nuortio,
  \emph{Overview of differential equations with non-standard growth}, Nonlinear
  Anal. \textbf{72} (2010), no.~12, 4551--4574.

\bibitem[KR61]{KraR61}
M.~A. Krasnosel'skij and Ya.~B. Rutitskij, \emph{{Convex functions and {O}rlicz
  spaces.}}, {Groningen-The Netherlands: P. Noordhoff Ltd. IX, 249 p. }, 1961
  (Russian, English).

\bibitem[LB93]{LiuBar93rem}
W.~B. Liu and John~W. Barrett, \emph{A remark on the regularity of the
  solutions of the {$p$}-{L}aplacian and its application to their finite
  element approximation}, J. Math. Anal. Appl. \textbf{178} (1993), no.~2,
  470--487.

\bibitem[LMDPM12]{DelLomMar12}
A.~L.~Lombardi L.~M.~Del~Pezzo and S.~Mart{\'{\i}}nez, \emph{Interior penalty
  discontinuous {G}alerkin {FEM} for the {$p(x)$}-{L}aplacian}, SIAM J. Numer.
  Anal. \textbf{50} (2012), no.~5, 2497--2521.

\bibitem[Orl31]{Orl31}
W.~Orlicz, \emph{{{\"U}ber konjugierte Exponentenfolgen.}}, Stud. Math.
  \textbf{3} (1931), 200--211 (German).

\bibitem[RD07]{RuzDie07}
Michael R{\ocirc{u}}{\v{z}}i{\v{c}}ka and Lars Diening, \emph{Non-{N}ewtonian
  fluids and function spaces}, N{AFSA} 8---{N}onlinear analysis, function
  spaces and applications. {V}ol. 8, Czech. Acad. Sci., Prague, 2007,
  pp.~94--143.

\bibitem[RR91]{RaoR91}
M.~M. Rao and Z.~D. Ren, \emph{Theory of {O}rlicz spaces}, Monographs and
  Textbooks in Pure and Applied Mathematics, vol. 146, Marcel Dekker Inc., New
  York, 1991.

\bibitem[RR96]{RajR96}
K.R. Rajagopal and M.~R{\r u}{\v z}i{\v c}ka, \emph{{On the modeling of
  electrorheological materials.}}, Mech. Res. Commun. \textbf{23} (1996),
  no.~4, 401--407.

\bibitem[R{\ocirc{u}}{\v{z}}00]{Ruz00}
M.~R{\ocirc{u}}{\v{z}}i{\v{c}}ka, \emph{Electrorheological fluids: modeling and
  mathematical theory}, Lecture Notes in Mathematics, vol. 1748,
  Springer-Verlag, Berlin, 2000.

\bibitem[Sch10]{Schw10}
S.~Schwarzacher, \emph{Higher integrability of elliptic differential equations
  with variable growth}, Master's thesis, University of Freiburg, Germany,
  2010.

\bibitem[SZ90]{ScoZha90}
L.~R. Scott and S.~Zhang, \emph{Finite element interpolation of nonsmooth
  functions satisfying boundary conditions}, Math. Comp. \textbf{54} (1990),
  no.~190, 483--493.

\bibitem[Uhl77]{Uhl77}
K.~Uhlenbeck, \emph{Regularity for a class of non-linear elliptic systems},
  Acta Math. \textbf{138} (1977), no.~3-4, 219--240.

\end{thebibliography}

\end{document}